\renewcommand{\PrintDOI}[1]{\doi{#1}}
\theoremstyle{plain}
\newtheorem{thm}{Theorem}[section]
\newtheorem{ques}[thm]{Question}
\newtheorem{claim}[thm]{Claim}
\newtheorem{fact}[thm]{Fact}
\newtheorem{prop}[thm]{Proposition}
\newtheorem*{prop*}{Proposition}
\newtheorem*{seged*}{Sublemma}
\newtheorem{cor}[thm]{Corollary}
\newtheorem{lem}[thm]{Lemma}
\newtheorem*{cond*}{Condition}
\newtheorem{conj}[thm]{Conjecture}
\newtheorem*{lem*}{Lemma}
\theoremstyle{definition}
\newtheorem{defn}[thm]{Definition}
\newtheorem*{defn*}{Definition}
\newtheorem{fel*}[thm]{Exercise}
\newtheorem*{megf*}{Observation}
\theoremstyle{remark}
\newtheorem{rem}[thm]{Remark}
\newtheorem{obs}[thm]{Observation}
\newtheorem*{rem*}{Remark}
\title{Reducing the dichromatic number via cycle reversions in infinite digraphs}
\author{Paul Ellis\thanks{ Department of Mathematics and Computer Science, Manhattanville College, 2900
Purchase Street, Purchase, NY 10577, USA\newline
 E-mail: \href{mailto:attila.: paulellis@paulellis.org}{: paulellis@paulellis.org} \newline
 Homepage: \url{http://paulellis.org/}
  } , Attila Joó\thanks{University of Hamburg and
  Alfréd Rényi Institute of Mathematics\newline
   E-mail: \href{mailto:attila.joo@uni-hamburg.de}{attila.joo@uni-hamburg.de} \newline
   Homepage: \url{https://www.math.uni-hamburg.de/home/joo/}
    } , Dániel T. Soukup\thanks{Universität Wien, Faculty of Mathematics, Kurt Gödel Research Center for Mathematical Logic, 
    Austria \newline
     E-mail: \href{mailto:: daniel.soukup@univie.ac.at}{daniel.soukup@univie.ac.at} \newline
     Homepage: \url{http://www.logic.univie.ac.at/˜soukupd73/}
      }}
\date{2019}
\begin{document}
\maketitle
\begin{abstract}
We prove the following conjecture of S. Thomassé: for every (potentially infinite) digraph $ D $ it is possible  to iteratively 
reverse 
directed cycles  in such a way that the dichromatic number of the 
final reorientation $ D^{*} $ of $ D $  is at most two and each edge is reversed only finitely many times.  In addition, we  
guarantee that in every strong component of $ D^{*} $ all the
local edge-connectivities are finite and any edge is reversed at most twice.
\end{abstract}

\section{Introduction}
Our general motivation is to analyse to what degree a complicated structure can be simplified by using only certain elementary 
operations. In our case, the 
structure is a digraph, the operation is reversing a directed cycle and we measure the complexity of the digraphs with the the 
dichromatic 
number.  The latter, introduced by V. Neumann-Lara  in 
\cite{dichro1st}, is a directed 
analogue of the chromatic number. 

\begin{defn}
The dichromatic number $ \boldsymbol{\chi(D)} $  of a digraph $ D $ is the smallest
cardinal $ \kappa $ such that $ 
V(D) $ can be coloured with $ \kappa $ many colours avoiding monochromatic directed cycles. 
\end{defn}

Various old and fascinating questions related 
to dichromatic number are still open.  For example, Neumann-Lara conjectured in \cite{NLara2col} that an orientation of a 
simple planar graph has always dichromatic number at most two. The best known partial result says that it is true whenever 
there is no directed cycle of length three (see \cite{planar}). In other type of problems the maximal dichromatic number of the 
possible orientations is in focus.  
Erd\H os and Neumann-Lara 
  conjectured the existence of a function $ f: \mathbb{N} \rightarrow \mathbb{N} $  such  that any graph $G$ with chromatic 
  number at 
least $ f(k)$ has an orientation $D$ with dichromatic number $ k $ (see \cite{Erd} and \cite{Erdcent}). We should emphasize 
that even 
the existence of $ f(3) $ is unknown and the analogous question 
considering infinite chromatic and dichromatic numbers is also open. Partial results were obtained, for example the independence 
of the statement ``Every 
graph of size and chromatic number $ \aleph_1 $ admits an orientation with dichromatic number $ \aleph_1 $''  (see 
\cite{cons}) 

C. Laflamme, N. 
Sauer and R. 
Woodrow drew the 
attention of the third author to the following conjecture of  
Thomassé:


\begin{conj}[S. Thomassé \cite{conj}]\label{conj}
For every (potentially infinite) digraph $ D $, it is possible  to reverse the edges of directed cycles iteratively in such a way that 
each edge is reversed only finitely many times and the dichromatic 
number of the 
final reorientation $ D^{*} $ of $ D $  is at most two.
\end{conj}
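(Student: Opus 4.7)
The plan is to prove the stronger statement of the abstract in two stages: first, identify a structurally tractable target reorientation $D^{*}$; second, drive $D$ to $D^{*}$ by cycle reversals while respecting the budget of two reversals per edge. Since directed cycles live inside the strong components of the underlying graph, both stages can be organized component by component, after which the acyclic condensation of $D^{*}$ patches the per-component pictures into a global 2-coloring of $V(D^{*})$.

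For the structural target, I would aim for a $D^{*}$ whose every strong component has all local edge-connectivities $\lambda(u,v)$ finite. This property is vacuous for finite digraphs but highly restrictive for infinite ones, and I would expect it to force dichromatic number at most two within each strong component $C$ via a Menger-based recursion: pick $u,v \in C$, take a finite minimum $u$-to-$v$ edge cut $F$, split $C$ into an upstream set $A$ and a downstream set $B=V(C)\setminus A$, inductively 2-color $C[A]$ and $C[B]$, and glue by a case analysis on the finitely many edges of $F$.

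For the reversal sequence, starting from $D$, whenever a strong component still contains a pair $(u,v)$ with $\lambda(u,v)=\aleph_{0}$, I would locate a directed cycle using one of the infinitely many edge-disjoint $u$-to-$v$ dipaths and reverse it to strictly shrink the set of infinite-connectivity pairs. A transfinite recursion enumerating such pairs should then tame every strong component. To respect the two-reversals-per-edge budget, I would try to commit in advance to a canonical final direction for each edge and only perform reversals consistent with that commitment.

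The main obstacle I foresee is exactly this global scheduling. Cycle reversals interact non-locally: a reversal that lowers $\lambda(u,v)$ can inflate $\lambda(u',v')$ for other pairs, and the same edge will sit on many candidate cycles. The strict two-reversals-per-edge bound rules out any naively greedy strategy and forces a delicately engineered invariant --- perhaps an assigned final direction for each edge together with an ordering in which edges are ``released'' for potential reversal --- maintained through a transfinite recursion. Once such an invariant is in place, I expect the verifications in both stages to reduce to organized bookkeeping.
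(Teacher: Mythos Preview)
Your Stage~1 contains a genuine gap: the claim that finite local edge-connectivities inside every strong component forces $\chi\leq 2$ is false, and the Menger-based recursion you sketch does not rescue it. A finite strongly connected digraph trivially has all $\lambda(u,v)$ finite yet can have arbitrarily large dichromatic number, so the property is not sufficient. More concretely, your recursion splits a strong component $C$ along a finite $u\to v$ cut into an upstream side $A$ and a downstream side $B$, but since $C$ is strong there are also edges from $B$ back to $A$; a cycle of $C$ can cross back and forth many times, so gluing $2$-colourings of $C[A]$ and $C[B]$ by a ``case analysis on the finitely many edges of $F$'' is not enough to kill all monochromatic cycles. And the recursion has no well-founded measure: $C[A]$ and $C[B]$ can be as large and as complicated as $C$ itself.

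The paper's route is quite different, and the role of the finite-connectivity (more precisely, \emph{scattered}) property is not what you think. The target $D^{*}$ with $\chi(D^{*})\leq 2$ is produced first, by a pure compactness/ultrafilter argument from Charbit's finite theorem (Proposition~\ref{lok reach good orient}): this gives a reorientation that is \emph{locally reachable} from $D$ and already $2$-colourable. The real work is then to upgrade local reachability to genuine reachability, and this is where elementary submodels enter. One builds a chain of submodels $M_\alpha$, and the scattered condition is engineered precisely so that cycle reversals performed in $D\setminus M_\alpha$ can be kept edge-disjoint from $D\cap M_\alpha$ (Lemma~\ref{avoid down before} and Corollary~\ref{avoid down}). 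So the low-connectivity target you aimed for is a \emph{tool} that makes the submodel decomposition go through, not the source of the $2$-colouring. Your instinct that the scheduling is the crux is right, but the mechanism that resolves it is the submodel chain together with the scattered invariant, not a per-edge commitment.
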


Thomassé et al. justified the conjecture for finite tournaments. In fact, they proved a stronger statement:
 
 \begin{thm}[Thomassé et al. \cite{gyarfas}]
 Let $ T $ be a tournament on the vertices $ v_1,\dots, v_n $ where for the outdegrees $ d^{+}(v_1)\geq d^{+}(v_2)\geq 
 \dots \geq d^{+}(v_n)  $ 
 holds. Then one can  reverse  directed cycles iteratively in such a way that  for the resulting tournament   $ 
 T^{*} $: each edge between the vertices $ 
 v_1, v_3, v_5,\dots $ goes forward as well as the edges between the vertices $ v_2, v_4, v_6 ,\dots$
 \end{thm}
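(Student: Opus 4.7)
The plan is to reduce the statement to two classical ingredients. The first is an interchange theorem for tournaments: any two tournaments on the same labelled vertex set with the same outdegree sequence are connected by a finite sequence of directed 3-cycle reversals. The second is an existence statement asserting that for any tournament score sequence $d_1 \geq \cdots \geq d_n$ there is a realisation $T^{*}$ in which the odd-indexed vertices $v_1, v_3, \ldots$ and the even-indexed vertices $v_2, v_4, \ldots$ each induce transitive subtournaments in the given order. Applying the interchange theorem to $T$ and $T^{*}$ then yields the desired sequence of reversals; the partition by parity is automatically a dichromatic $2$-colouring of $T^{*}$.

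For the existence of $T^{*}$ I would simply prescribe $v_i \to v_j$ whenever $i < j$ and $i \equiv j \pmod 2$, which by itself already forces the stated conclusion on same-parity pairs. Writing $p_i := |\{m > i : m \equiv i \pmod 2\}|$ for the resulting same-parity outdegree contribution, the remaining bipartite graph between $O = \{v_1, v_3, \ldots\}$ and $E = \{v_2, v_4, \ldots\}$ must be oriented so that each $v_i$ has exactly $a_i := d^{+}(v_i) - p_i$ outgoing edges into the opposite parity class. The existence of such a bipartite orientation is precisely a Gale--Ryser realisability question for the two bipartite degree sequences $(a_i)_{v_i \in O}$ and $(b_j)_{v_j \in E}$ with $b_j = d^{+}(v_j) - p_j$.

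The main technical obstacle is verifying the Gale--Ryser conditions directly from Landau's characterisation of tournament score sequences, namely that the sum of the $k$ smallest scores is at least $\binom{k}{2}$ with equality at $k = n$. One must first confirm $a_i \geq 0$ (and $b_j \geq 0$), which I expect to follow by applying Landau to the suffix $\{v_i, v_{i+1}, \ldots, v_n\}$ and comparing with $p_i$. The dominance inequalities of Gale--Ryser then reduce, after sorting each side by size, to comparing partial sums of $(a_i)$ against appropriately capped sums on the opposite side; I would prove each such inequality by careful bookkeeping to a Landau inequality applied to a cleverly chosen subset of $\{v_1, \ldots, v_n\}$. Once $T^{*}$ is realised in this way, the interchange theorem furnishes the finite sequence of $3$-cycle reversals taking $T$ to $T^{*}$, and since $T$ is finite the total number of reversals, and hence the number of times each edge is reversed, is automatically finite.
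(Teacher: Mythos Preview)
The present paper does not prove this theorem; it is quoted from \cite{gyarfas} as motivation, with no argument supplied here. So there is no in-paper proof to compare your proposal against.

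On its own merits, your outline is correct and is in fact essentially the strategy of the original source. Cycle reversals preserve every vertex outdegree, so any reachable $T^{*}$ must share the labelled score sequence of $T$; conversely, the classical interchange theorem (two labelled tournaments with identical outdegree functions are connected by $3$-cycle reversals) reduces the problem to the bare existence of such a $T^{*}$. Your Gale--Ryser reduction for the bipartite odd/even part is the right framework, and the global count $\sum_{i\in O}a_i+\sum_{j\in E}b_j=|O|\cdot|E|$ is immediate from $\sum p_i=\binom{|O|}{2}+\binom{|E|}{2}$. The nonnegativity $a_i\ge0$ does follow from Landau on the suffix $\{v_i,\dots,v_n\}$ as you suggest: since $d^{+}(v_i)$ is the maximum among these $n-i+1$ scores and their sum is at least $\binom{n-i+1}{2}$, one gets $d^{+}(v_i)\ge(n-i)/2\ge p_i$. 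The remaining Gale--Ryser dominance inequalities are where the real work lies; they can be derived from Landau by similar partial-sum comparisons, but your phrase ``apply Landau to a cleverly chosen subset'' is a plan, not a proof---you would need to carry out that bookkeeping explicitly to finish.
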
 

Note that  $ \chi(T^{*}) \leq2 $ holds in the theorem above. The restriction of the conjecture to arbitrary finite digraphs was 
solved by Charbit.  
\begin{thm}[P. Charbit, Theorem 4.5 of \cite{CharbitPhD}]\label{finite thm}
 In any finite digraph $ D $, it is always possible to reverse directed cycles  iteratively in such a way that $  \chi(D^{*})\leq 
 2$ 
 holds for the resulting orientation~$ D^{*} $.
 \end{thm}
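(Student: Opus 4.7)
The plan is based on a structural characterization of cycle-reversal equivalence. I would first establish the following folklore fact: two finite orientations $D_1, D_2$ of the same underlying graph are connected by a sequence of cycle reversions if and only if every vertex has the same in-degree in $D_1$ and $D_2$. Necessity is clear, since reversing a directed cycle changes no in-degree at any vertex (each vertex on the cycle still has exactly one incoming and one outgoing arc from the cycle). For sufficiency, the arcs on which $D_1, D_2$ disagree, regarded as a subdigraph of $D_1$, satisfy $d^+(v) = d^-(v)$ at every vertex (a direct consequence of equal in-degrees in $D_1$ and $D_2$), hence decompose into arc-disjoint directed cycles that can be reversed one by one to convert $D_1$ into $D_2$.

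Moreover, since any directed cycle lies in a single strong component, cycle reversions preserve the partition of $V(D)$ into strong components. A uniform 2-coloring with palette $\{0,1\}$ of each strong component then gives a 2-coloring of $D$ with no monochromatic cycle, reducing the problem to the strongly connected case. Combining the two reductions, it suffices to exhibit, for any strongly connected finite digraph $D$, an orientation $D^*$ of the underlying graph satisfying (i) the same in-degree as $D$ at every vertex, and (ii) $\chi(D^*) \leq 2$.

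Constructing such a $D^*$ is the heart of the argument and the main obstacle. My preferred attack is induction on $|V(D)|$: remove a well-chosen vertex $v$, apply induction to an auxiliary subdigraph on $V(D) \setminus \{v\}$, then reattach $v$ by orienting its incident edges to match the prescribed in-degree at $v$ while avoiding any new monochromatic cycle through $v$. The difficulty is that any reorientation at $v$ also perturbs the in-degrees of $v$'s neighbors, so the inductive hypothesis must be strengthened to permit small, controlled in-degree adjustments at a few vertices. An alternative that bypasses this coupling is to build $D^*$ directly from an ear decomposition of $D$: begin with a short directed cycle (trivially 2-colorable) and extend the partition carefully as each new ear is added, monitoring the in-degree profile throughout so that the final orientation matches that of $D$ vertex by vertex.
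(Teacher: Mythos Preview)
Your reductions are sound. The in-degree characterization of cycle-reversal equivalence is correct (and is essentially Proposition~\ref{fin many cycle} of the paper), and your claim that a single cycle reversal preserves the strong-component partition is also correct: the cycle $C$ lies in one strong component $S$, the reversed cycle keeps $V(C)$ strongly connected, and every $w\in S$ has a shortest path to $V(C)$ (and from $V(C)$) that avoids the edges of $C$, hence survives the reversal.

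The gap is that you never actually carry out the ``heart of the argument.'' You reduce to: \emph{every finite strongly connected digraph admits a reorientation with the same in-degree sequence and dichromatic number at most~$2$}. You then sketch two attacks (vertex deletion with a strengthened inductive hypothesis, or ear decomposition) but complete neither. In the inductive approach you yourself note that reorienting edges at the deleted vertex perturbs the neighbours' in-degrees, and you do not specify what the strengthened hypothesis is or why it survives the induction. The ear-decomposition sketch is even vaguer: adding an ear changes in-degrees at its two attachment points only, so matching a \emph{prescribed} in-degree profile at the end while maintaining a valid $2$-colouring is exactly the difficulty, and you give no mechanism for it. As written, the proposal is a reformulation, not a proof.

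The paper's argument avoids the degree-sequence reformulation entirely. It invokes Theorem~\ref{dich at most k}: $\chi(D)\le 2$ iff there is a linear order on $V(D)$ in which every directed cycle has at least half of its edges forward. Fix any linear order on $V(D)$. If some directed cycle has more backward than forward edges, reverse it; this strictly increases the total number of forward edges. Since that total is bounded by the number of edges, the process terminates, and at termination every cycle has at least half its edges forward, whence $\chi\le 2$. No induction, no control of in-degrees, no strong-connectivity reduction --- just a potential function and Theorem~\ref{dich at most k}.
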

The theorem is a consequence of the following interesting characterisation:
 
 \begin{thm}[P. Charbit, Theorem 4.4 of \cite{CharbitPhD}]\label{dich at most k}
Let $ D $ be a digraph and $ k\geq 1 $. Then  $ \chi(D)\leq k $ if and only if there exists a linear order $ < $ on $ V(D) $ such 
that for any 
directed cycle $ C $ of $ D $ at least $ \frac{\left|C\right|}{k} $ edges of $ C $ are  forward edges (with respect to $ < $).
 \end{thm}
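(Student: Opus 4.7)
The plan is to prove both implications by explicit constructions.

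For ($\Rightarrow$), assume $\chi(D)\leq k$ and fix a proper dicoloring with acyclic color classes $V_1,\dots,V_k$. On each $V_i$ I would pick a topological linear order $<_i$ of $D[V_i]$ (obtained by extending the partial order generated by the transitive closure of $D[V_i]$), and then define the linear order $<$ on $V(D)$ lexicographically: first by color, then by $<_i$ within the $i$-th color class. Fix a directed cycle $C$ in $D$, and write $s$ for the number of edges of $C$ within a color class (all forward, by the topological choice), $U$ for the number of edges strictly increasing color (forward) and $B$ for the number strictly decreasing color (backward). Because $C$ is closed, the total color-increase around it equals the total color-decrease; bounding each increase by $k-1$ and each decrease by at least $1$ yields $(k-1)U\geq B$, so $|C|=s+U+B\leq k(s+U)$, meaning at least $|C|/k$ edges of $C$ are forward.

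For ($\Leftarrow$), assume such a linear order $<$ exists. Assign weight $+1$ to every backward edge and $-(k-1)$ to every forward edge of $D$. The hypothesis translates exactly into the statement that every directed cycle of $D$ has non-positive total weight. In the finite case this lets us define a longest-path potential $\phi:V(D)\to\mathbb{Z}$ (adjoining a source with zero-weight edges to all vertices) satisfying $\phi(v)-\phi(u)\geq w(u,v)$ for every edge $(u,v)$, and I would color $v$ by $\phi(v)\bmod k$. Were there a monochromatic cycle $C$, every edge $(u,v)$ of $C$ would satisfy $\phi(v)-\phi(u)\equiv 0\pmod k$; combined with the weight bounds, each backward edge of $C$ would contribute $\phi(v)-\phi(u)\geq k$ and each forward edge $\phi(v)-\phi(u)\geq 0$ (since the only multiples of $k$ which are $\geq -(k-1)$ are the non-negative ones). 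Because the forward edges of $D$ form an acyclic subgraph, $C$ contains at least one backward edge, and then the telescoping sum around $C$ is strictly positive, contradicting that it equals $0$.

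The conceptual key is the choice of weights $(+1,-(k-1))$, tuned so that the cycle hypothesis reads precisely as ``no positive cycle'' and the divisibility-by-$k$ forced by a monochromatic cycle is incompatible with the weight bounds on edges of both orientations. The main obstacle in turning this into a full proof is the infinite case, since longest-path potentials need not exist in an infinite weighted digraph. Both directions, however, reduce cleanly to the finite case: $\Rightarrow$ because cycles are finite and the lex construction makes sense in any cardinality (topological orders on each $V_i$ exist by the order-extension principle), and $\Leftarrow$ via a compactness argument for the dichromatic number---every finite subdigraph of $D$ inherits the hypothesis from the restricted linear order, has dichromatic number $\leq k$ by the finite case, and then $\chi(D)\leq k$ follows by standard compactness.
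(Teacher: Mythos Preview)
The paper does not prove this theorem; it is quoted as Theorem~4.4 of Charbit's thesis and used as a black box to derive Theorem~\ref{finite thm}. There is thus no proof in the paper to compare your argument against.

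That said, your proof is correct. In the $(\Rightarrow)$ direction the lexicographic construction and the balance inequality $(k-1)U\geq B$ are clean, and the passage to infinite $D$ via Szpilrajn's order-extension principle on each acyclic colour class is fine since cycles are finite. In the $(\Leftarrow)$ direction the weight assignment $(+1,-(k-1))$ is exactly calibrated so that the hypothesis becomes ``no positive-weight cycle'', the longest-path potential $\phi$ exists in the finite case, and the modulo-$k$ colouring argument goes through; your observation that a monochromatic edge forces $\phi(v)-\phi(u)\in\{0,k,2k,\dots\}$ for forward edges and $\{k,2k,\dots\}$ for backward edges, combined with the existence of at least one backward edge on any cycle, gives the contradiction. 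The compactness reduction of the infinite case to the finite one is standard and valid here, since the hypothesis on $<$ restricts to any finite subdigraph.
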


 Indeed, for the given finite digraph $ D $ let us fix an arbitrary linear order $ < $ on $ V(D) $. If for each directed cycle at least  
 half of the edges 
 goes forward,  then $ \chi(D)\leq 2 $ by Theorem \ref{dich at most k}. Otherwise, we reverse a directed 
 cycle that violates this condition, which increases the total number of forward edges.

In the infinite case, the condition  ``each edge is reversed only finitely many times'' of Conjecture \ref{conj} has an important 
role. Without that we do not  have a well-defined orientation after  some limit step. Under this ``local finiteness'' assumption each 
edge has a 
stabilized orientation  before any limit step of a 
transfinite sequence of cycle reversions and hence 
 a natural limit 
 orientation can be defined. 
 
 Although Theorem \ref{dich at most k} remains true for infinite digraphs by compactness, the proof of Theorem \ref{finite thm} 
 based on it does not seem to adapt for the infinite 
 case. Indeed, after the reversion of a violating cycle the total number of forward edges may remain the same infinite cardinal.
 
 For the infinite case new ideas were needed. The  breakthrough  was due to 
  the first 
 and third author in \cite{ellis2019cycle} for infinite 
 tournaments. 
 They actually proved 
  more than  Conjecture \ref{conj}: one can transform any tournament by iteratively reversing  directed cycles to a linear order 
  ``modulo finite blocks''. More precisely:
 \begin{thm}[P. Ellis, D. T. Soukup,  \cite{ellis2019cycle}]\label{strong finite}
 In every tournament $ T $, one can iteratively reverse directed cycles  (reversing each edge only finitely often)  such that in 
 the resulting reorientation 
 $ T^{*} $ each strong 
 component is finite. 
 \end{thm}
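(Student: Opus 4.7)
The plan is to build the reorientation $T^{*}$ together with its witnessing transfinite sequence of cycle reversals by recursion on a fixed well-ordering $(v_{\alpha})_{\alpha<\kappa}$ of $V(T)$. At each stage I maintain a growing ``processed prefix'' $A_{\alpha}\subseteq V(T)$ together with a current reorientation $T_{\alpha}$ of $T$, subject to the invariants that (i) $\{v_{\beta}:\beta<\alpha\}\subseteq A_{\alpha}$, (ii) $A_{\alpha}$ splits as a disjoint union of finite strong components of $T_{\alpha}$, and (iii) every edge of $T_{\alpha}$ between $A_{\alpha}$ and its complement is oriented out of $A_{\alpha}$. At limit stages I take the natural limit orientation, which is well defined by the local-finiteness of the accumulated reversals; then invariant (ii) passes to the limit because each edge between $A_{\beta}$ and its complement has been forward for all $\beta$ in a cofinal set. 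Setting $T^{*}=T_{\kappa}$ gives the statement.

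The heart of the argument is the successor step $\alpha\mapsto\alpha+1$, where I must cut off a finite ``next block'' $B\subseteq V(T)\setminus A_{\alpha}$ containing $v_{\alpha}$ that becomes a strong component of $T_{\alpha+1}$, while restoring invariant (iii) for $A_{\alpha+1}=A_{\alpha}\cup B$. My proposed construction is a secondary $\omega$-length inner recursion: start with $B_{0}=\{v_{\alpha}\}$; at step $n$, consider those vertices $u\in V(T)\setminus(A_{\alpha}\cup B_{n})$ sending a backward edge into $B_{n}$ that cannot be flipped by a short cycle contained in $A_{\alpha}\cup B_{n}\cup\{u\}$ (without disturbing invariant (iii)). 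If there is such a $u$, swallow it and any auxiliary vertices needed as ``return paths'' for the offending cycles into $B_{n+1}$; otherwise stop. Crucially, since $T$ is a tournament, any two vertices already share an edge, so each backward edge participates in many potential $3$-cycles, and one can appeal to Theorem~\ref{finite thm} on the finite subtournament induced by $B_{n}$ (together with the chosen auxiliary vertices) to produce a concrete finite reversal sequence that simultaneously fixes the orientation inside $B_{n}$ and on the edges from $B_{n}$ outward. The key claim is that this inner recursion terminates after finitely many steps with a finite $B$, which I would approach by associating a decreasing ordinal or cardinal measure (e.g.\ the minimum size of a ``troublesome witness set'') to each step.

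The main obstacle is certifying local finiteness of edge reversals across all transfinite stages simultaneously. An edge $uv$ with $u\in A_{\beta}$ could in principle be involved at later stages $\gamma>\beta$, and one must give a structural reason why it stabilises. The natural discipline is to declare that at stage $\alpha+1$ all reversals take place strictly inside $V(T)\setminus A_{\alpha}$ except for the single, one-time ``closing'' reversals that enforce invariant (iii) for the new block~$B$. Then any edge inside a finalised block of $A_{\beta}$ is never touched again, and any edge from $A_{\beta}$ outward is touched at most when $B$ is formed, and at most once thereafter at the moment its other endpoint enters some later block. This is the delicate piece that I expect to be the main technical wrestle: showing that the inner recursion can always be executed using only reversals that respect this discipline, essentially by absorbing uncooperative neighbours into $B$ instead of flipping their edges into $A_{\alpha}$.
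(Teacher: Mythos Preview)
The paper does not prove Theorem~\ref{strong finite}; it is quoted from \cite{ellis2019cycle} as background, so there is no argument here to compare against directly. Your proposal, however, has a genuine gap independent of that.

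Your invariant (iii) --- that every edge between $A_{\alpha}$ and its complement points out of $A_{\alpha}$ --- is strictly stronger than what the theorem asks for, and it cannot in general be achieved with finite blocks. Take the transitive tournament on $\omega$ in which $i\to j$ whenever $i>j$. This tournament is acyclic, so no cycle reversal is ever available. If your fixed well-ordering starts with $v_{0}=0$, then the first successor step must produce a finite $B\ni 0$ with every edge leaving $B$ directed outward. But every $n>0$ sends an edge into $0$, and since there are no cycles in $\{0,n\}$ (or in any larger finite set) your inner recursion is forced to swallow each such $n$ in turn; the sets $B_{n}$ grow without bound and the process never halts. The promised ``decreasing ordinal or cardinal measure'' cannot exist here, because nothing decreases. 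Note that this tournament trivially satisfies the theorem (all strong components are singletons), so the failure is entirely in your invariant, not in the statement.

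Conceptually, invariant (iii) forces each $A_{\alpha}$ to sit at the very top of the condensation of $T_{\alpha}$, but a given vertex may have infinitely many strong components above it in the eventual condensation of $T^{*}$, so no finite set containing it can be made to dominate the rest. The proof in \cite{ellis2019cycle}, and the machinery the present paper builds for general digraphs, proceeds instead via elementary submodels and the notion of local reachability (see Sections~3--6), rather than by attempting to peel off finite dominating blocks in a prescribed vertex order.
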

 
  By applying
Theorem \ref{finite thm} to the (already finite) strong components separately, one can reduce the dichromatic number to at most 
two. 

Considering 
general 
 digraphs, one cannot hope to transform all the strong components to finite ones. Indeed, take $ \kappa \geq \aleph_0 $ many 
 directed cycles, pick one vertex in each and 
  identify these vertices.\footnote{This 
was pointed out by Carl Bürger  (personal communication).} The resulting digraph remains the same (up to isomorphism) after 
any 
iterative cycle reversion. 
\medskip

Our first main result is somewhat analogous to Theorem \ref{strong finite} but for arbitrary digraphs: we can reverse cycles to 
make the strong components have low connectivity.

\begin{thm}\label{local make small thm}
In every digraph $ D $, one can iteratively reverse directed cycles  (reversing each edge only finitely often)  such that in the 
resulting 
reorientation $ 
D^{*} $  
in each strong 
 component every local edge-connectivity is finite.
 \end{thm}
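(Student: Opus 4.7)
The plan is to proceed by transfinite recursion over the ordered pairs of vertices of $D$. We well-order these pairs as $(u_\alpha, v_\alpha)_{\alpha < \kappa}$ and build a sequence of reorientations $D_0 = D, D_1, D_2, \ldots$, where each $D_{\alpha+1}$ is obtained from $D_\alpha$ by a countable block of cycle reversions, and limit ordinals are handled by taking the natural limit orientation (well-defined since each edge will be reversed only finitely often by design). At stage $\alpha+1$ we inspect the pair $(u_\alpha, v_\alpha)$: if it lies in the same strong component of $D_\alpha$ with $\lambda_{D_\alpha}(u_\alpha, v_\alpha) = \infty$, we invoke a key sublemma producing $D_{\alpha+1}$ in which either $u_\alpha, v_\alpha$ lie in different strong components or $\lambda_{D_{\alpha+1}}(u_\alpha, v_\alpha) < \infty$.

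The key sublemma should read: given a digraph with $u, v$ in the same strong component and $\lambda(u,v)=\infty$, there is a countable sequence of cycle reversions whose limit orientation puts $u$ and $v$ in different strong components. To establish it, one exploits an infinite family $(P_n)_{n\in\mathbb{N}}$ of pairwise edge-disjoint $u$-to-$v$ paths provided by $\lambda(u,v)=\infty$, together with $v$-to-$u$ paths guaranteed by strong connectedness. If $\lambda(v,u)$ is finite one can flip a bottleneck $v$-to-$u$ cut by reversing finitely many cycles. If $\lambda(v,u)$ is also infinite one pairs the $P_n$ with $v$-to-$u$ paths $Q_n$ and schedules the reversions so that, in the limit, the $v$-to-$u$ direction is destroyed while each $P_n$ ultimately returns to its original orientation; this step crucially relies on the fact that the cut invariant $|A\to B|$, preserved at every finite stage of cycle reversion, may be broken at a limit stage, allowing us to genuinely shift cut values.

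The main obstacle is the combined global construction, and in particular controlling the total number of reversals per edge: the strengthened statement promised in the abstract demands each edge be reversed at most twice across the entire $\kappa$-length process. This calls for a locality argument, ensuring that the cycles reversed during stage $\alpha$ live in a region canonically determined by $(u_\alpha, v_\alpha)$ and that any given edge belongs to only finitely many such regions. Additionally, at limit stages we must verify that properties established for earlier pairs (finite local edge-connectivity, or separation into different strong components) are not undone by the limit operation; this stability is not automatic and will hinge on the localized character of each successor step, together with a careful argument that the reachability relation in the limit orientation respects the separations obtained at prior stages.
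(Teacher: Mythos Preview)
Your key sublemma is essentially the paper's Corollary~4.5: if $u,v$ lie in the same strong component and $\lambda(u,v;D)+\lambda(v,u;D)\ge\aleph_0$, then a reversion sequence separates them. Two corrections: the sequence need not be countable (its length is bounded by $\lambda(u,v;D)+\lambda(v,u;D)$, which may be any infinite cardinal), and the mechanism is cleaner than you describe --- one takes an Erd\H{o}s--Menger $u\to v$ path-system, reverses it via Proposition~4.3 and Corollary~4.4, and observes (Proposition~4.5) that this kills all $u\to v$ paths. No ``limit breaking of cut invariants'' is needed.

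The real gap is exactly where you say ``this calls for a locality argument'': you have not given one, and in fact the naive pair-by-pair enumeration cannot be made to work without substantial additional structure. There is no reason a fixed edge $e$ should participate in only finitely many of your separation steps; the cycles used to separate $(u_\alpha,v_\alpha)$ can range over the whole digraph, and nothing ties them to a ``region'' with the finiteness property you need. Consequently the limit at stage~$\omega$ (let alone at larger limits) need not exist, and your recursion breaks down immediately. The paper confronts exactly this obstacle: it replaces the vertex-pair enumeration by a chain of elementary submodels with the chain property, proves a ``scattering'' lemma (Lemma~5.3) producing $(V(D)\cap M,|M|^+)$-scattered reorientations, and --- crucially --- establishes an avoidance lemma (Lemma~6.1 / Corollary~6.2) guaranteeing that once a submodel $M_\alpha$ has been processed, all subsequent reversions can be carried out entirely inside $D\setminus M_\alpha$. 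This is what forces each edge to be touched during only one block of the recursion. Your outline recognises that such a localisation is required but supplies no mechanism to achieve it; that mechanism is the actual content of the proof.
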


 Then, our second main result is the positive answer for the original conjecture in its whole generality:
 
 \begin{thm}\label{main result}
 In every digraph $ D $ one can reverse directed cycles iteratively (reversing each edge only finitely often)  such that $ 
 \chi(D^{*})\leq 2 $ holds for 
 the 
 resulting 
 reorientation $ D^{*} $.
 \end{thm}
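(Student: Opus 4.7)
The plan is to deduce Theorem \ref{main result} from Theorem \ref{local make small thm} by adding a second phase of cycle reversions inside each strong component. First I would apply Theorem \ref{local make small thm} to $D$ to obtain a reorientation $D'$ whose strong components all have finite local edge-connectivities, with each edge reversed only finitely often. Since every directed cycle lies inside a single strong component, strong components are setwise invariant under cycle reversal, and reversions carried out in different components do not interact. Moreover, because any monochromatic cycle lies inside one strong component, we have $\chi(D')=\sup_S\chi(S)$ as $S$ ranges over the strong components of $D'$. Thus it suffices to show, for each strong component $S$ of $D'$, that one can continue to reverse cycles inside $S$ (each edge finitely often) to force $\chi(S)\le 2$; then the parallel concatenation of all these sequences with Phase~1 still reverses each edge only finitely many times.

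The heart of the argument is then the following auxiliary claim: if $S$ is a strongly connected digraph with all local edge-connectivities finite, then one can iteratively reverse directed cycles in $S$ (each edge finitely often) so that the final orientation has dichromatic number at most $2$. I would attack this through Charbit's characterisation (Theorem \ref{dich at most k}) with $k=2$: build by transfinite recursion both a linear order $<$ on $V(S)$ and a compatible sequence of cycle reversions so that in the final orientation every directed cycle has at least half of its edges going forward. The finite local edge-connectivities allow one to exhaust $S$ by finite subdigraphs with finite ``boundaries'', and on each successive piece one applies Charbit's finite theorem (Theorem \ref{finite thm}) to fix the order locally; once a piece has been processed, its internal edges are never reversed again, which yields local finiteness of the overall sequence.

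The main obstacle is handling ``long'' directed cycles of $S$ that are not contained in any finite subdigraph of the exhaustion: they must still end up with at least half of their edges forward in the final order, even though no finite stage ever inspects them in full. Finite local edge-connectivity is essential here, because any such cycle can only cross each finite boundary of the exhaustion a bounded number of times, so once the linear order has been fixed on a large enough initial piece, the forward-edge count along any long cycle is forced by the aggregated local half-forward behaviour. Making this compactness-flavoured bookkeeping precise---and verifying that limit stages do not destroy the half-forward condition on cycles already handled---is where the genuine work lies. Assuming this auxiliary claim, concatenating Phase~1 with the Phase~2 sequences yields a single transfinite sequence of cycle reversions in $D$ in which each edge is flipped only finitely often and whose final orientation $D^{*}$ satisfies $\chi(D^{*})\le 2$, witnessing Theorem \ref{main result}.
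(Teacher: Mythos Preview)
Your two-phase plan is reasonable in outline, but Phase~2 is where the entire difficulty of the theorem lives, and your sketch of it does not work.

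The claim that ``finite local edge-connectivities allow one to exhaust $S$ by finite subdigraphs with finite boundaries'' is false. Consider $\kappa\geq\aleph_1$ many directed cycles glued at a single vertex~$v$ (the very example the paper uses to show one cannot make all strong components finite). This $S$ is strongly connected with all local edge-connectivities equal to~$1$, yet any finite subdigraph containing $v$ has $\kappa$ many boundary edges. So your proposed exhaustion simply does not exist, and with it the plan of applying Theorem~\ref{finite thm} piece by piece collapses. More generally, a strong component with finite local edge-connectivities can have arbitrarily large cardinality, so there is no hope of reducing to a countable or locally finite situation by this route. You correctly identify that the handling of ``long'' cycles is ``where the genuine work lies'', but you offer no mechanism for it; that is precisely the gap.

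The paper does \emph{not} deduce Theorem~\ref{main result} from Theorem~\ref{local make small thm}; both are corollaries of a single technical Lemma~\ref{finalLemma}, proved by transfinite induction on the size of an elementary submodel~$M$ with the chain property. The ingredients you are missing are: (i) the notion of \emph{local reachability} and the compactness fact (Proposition~\ref{lok reach good orient}) that a $2$-dicolourable reorientation locally reachable from $D$ always exists; (ii) Claim~\ref{countable to everybody}, which upgrades local reachability to actual reachability for countable digraphs and supplies the base case; and (iii) the ``avoidance'' Corollary~\ref{avoid down}, which lets one carry out further reversions in $D\setminus M$ without ever touching edges already processed inside~$M$. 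These replace your hoped-for finite exhaustion and are what make the transfinite recursion go through for uncountable~$D$.
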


The paper is organized as follows. We introduce some notation in the following section.  Then we develop the necessary tools in 
sections 3 until 6, from which 
we derive our main results in Section 7. The next section, Section 8, is devoted to  some structural consequences of the main 
results. We mention some 
open problems in Section 9. In the appendix, we collected the basic facts about elementary submodels that we use.

\subsection{Acknowledgements}

 The second author is grateful  for the support of  the Alexander von Humboldt Foundation and NKFIH 
 OTKA-129211. 
 
 The third author would like to thank the generous support of the FWF Grant I1921 and NKFIH OTKA-129211.
 
\section{Notation}

\subsection*{Graphs and digraphs} In graphs we allow multiple edges but not loops. More precisely, an \textbf{edge} $ e $ is 
an ordered pair $ \left\langle \{ u,v \}, i\right\rangle  $ 
where $ u\neq 
v $ are its end vertices  (the only role of index $ i $ is to represent multiple edges).  An \textbf{undirected 
graph} $ G $ is 
a set of 
edges, its 
vertex set $ \boldsymbol{V(G)} $ is the set of the end vertices of its edges.

The variable $ \boldsymbol{\overrightarrow{e}} 
$ stands for an orientation of an edge  $ e=\left\langle \{ u,v \}, i\right\rangle  $ of a graph $G$
from the two possibilities, more precisely $ \overrightarrow{e}\in \{  \left\langle u,v,i  \right\rangle, \left\langle v,u,i  \right\rangle 
\} $. Here $\left\langle u,v,i  \right\rangle $ stands for the orientation of $e$ from $u$ to $v$.
  A \textbf{digraph} $ D $ is what we obtain by 
orienting (the edges of) an undirected graph $ G $. The inverse operation $ \boldsymbol{\mathsf{Un}(D)}$ gives 
back the 
underlying 
undirected graph $ G $. We define $ \boldsymbol{V(D) }$  to be $ V(\mathsf{Un}(D)) $.
 The set of the outgoing edges of a vertex set $ W $ is denoted by $ \boldsymbol{\mathsf{out}_D(W) }$ and  $ 
 \boldsymbol{\mathsf{in}_D(W)} $ stands for the 
 ingoing edges. For $ E\subseteq \mathsf{Un}(D) $, we 
write 
$ \boldsymbol{D(E)} $ for the set of the $ D $-oriented elements of $ E $, i.e., the unique $ D'\subseteq D $ with $ 
\mathsf{Un}(D')=E $. A digraph $ D^{*} $ is a \textbf{reorientation} 
of $ D $ if 
$  \mathsf{Un}(D^{*})=\mathsf{Un}(D)  $. 

\medskip

 Cycles and paths are always meant to be directed. Formally, a \textbf{cycle} is a 
digraph of the form $ \{ \left\langle v_0,v_1, i_0 \right\rangle, \left\langle v_1,v_2,i_1 
\right\rangle, 
\dots\left\langle v_n,v_0,i_n\right\rangle    \} $ where $ n \geq 1$ and $ v_i $ are pairwise distinct. Paths are defined similarly.
The \textbf{local edge-connectivity} from $ u $ to $ v $ in a digraph $ D $ is the maximal size of a system  $ \mathcal{P} $ of 
pairwise edge-disjoint paths from $ u $ to $ v $ in 
$ D $ and it is denoted by $ \boldsymbol{\lambda(u,v;D)} $. The \textbf{strong components} of $ D $ are the equivalence 
classes 
of $ V(D) $ where $ u\sim v $ if $ \lambda(u,v;D), \lambda(v,u;D)>0 $. We refer to the subdigraphs spanned by the strong 
components also as strong 
components  but it will not lead to any confusion. A digraph is \textbf{strongly connected} (or \textbf{strong}) if it has only one 
strong 
component.

\subsection*{Set theory}
We use some standard set theoretic notation. We write $\boldsymbol{ \bigcup \mathcal{X}} $ for the union of the elements 
of the set family $ 
\mathcal{X} $. Variables $ \boldsymbol{\alpha, \beta, \gamma} $ and $\xi$ stand for ordinals and $ \boldsymbol{\kappa} $ 
denotes a cardinal. The set of the natural 
numbers is $ \boldsymbol{\omega} $. We say a set is \textbf{countable } if it is either finite or countably infinite. The restriction 
of a 
sequence $ s $ of length at least $ \alpha $ to $ \alpha $ is  $\boldsymbol{s \upharpoonright  \alpha} $. The concatenation of 
sequences $ s $ and $ z $ 
is denoted by 
$\boldsymbol{ s^\frown z} $. If a sequence has only one member $ C $, then we abuse the notation and write simply $ C $ 
for the sequence itself
as well. Finally, $\boldsymbol{ \left[X \right]^{\kappa}}  $ and $\boldsymbol{ \left[X \right]^{<\kappa}}  $ stand for the set of 
the $ \kappa $-sized and 
smaller than  $ 
\kappa $ subsets of $ X $ respectively.

\subsection*{Cycle reversions}

Let $ G $ be an  undirected graph and 
let $ 
\left\langle D_\xi:   \xi<\alpha\right\rangle $ be a sequence of orientations of $ G $. If the orientation of 
each  $ e\in G $ is stabilizing in the sense that there is some $ \xi_e<\alpha $ such that $ D_{\xi_e}(e)=D_{\xi}(e) $ whenever $ 
\xi \geq \xi_e $, 
then we define the 
\textbf{limit orientation}  $ D^{*} $ of the sequence by letting $ D^{*}(e):=D_{\xi_e}(e) $ for $ e\in G $.   It will be convenient  
to   always have 
some  kind of limit-type 
object even when the orientations of the edges are not stabilized. The 
set of 
\textbf{generalized 
limits} of  $ \left\langle D_\xi:   \xi<\alpha\right\rangle $ is defined to be the set of those orientations $ D^{*} $ of $ G $ for 
which for every finite $ 
F\subseteq G $ 
the set \[ \{ \xi<\alpha: D_\xi(F)=D^{*}(F) \} \] is unbounded in $ \alpha $. One can construct a generalized limit by taking an 
ultrafilter $ 
\mathcal{U} $ on 
$ 
\alpha $ containing all the non-empty terminal segments of $ \alpha $ and orienting an edge $ e\in G $ between $ u $ and $ v $ 
towards $ v $ iff
\[ \{ \xi<\alpha: D_\xi(e) \text{ points towards } v \}\in \mathcal{U}. \] 

 For a digraph $ D $, we define the set
  $ \mathbb{RS}(D)$  of \textbf{reversion sequences} acting on $ D $ and their 
  effect by the following recursion.  The 
  sequence  
$ 
\mathcal{C}=\left\langle 
C_\xi: \xi<\alpha  \right\rangle $ is in $  \mathbb{RS}(D)  $ and 
$ \boldsymbol{D \circlearrowleft \mathcal{C}}=D^{*}$ if 
\begin{itemize}
\item either $\alpha=0 $ and $ D^{*}=D $,
\item or $ \alpha=\beta+1 $,  $ (\mathcal{C} \upharpoonright\beta) \in 
\mathbb{RS}(D) $,  $ C_\beta $ is a directed cycle in $ D\circlearrowleft  (\mathcal{C}\upharpoonright \beta) $ and
we obtain $ D^{*} $ by reversing the direction of each edge in $ C_\beta $ in $ D\circlearrowleft  (\mathcal{C}\upharpoonright 
\beta) $,
\item or $ \alpha $ is limit ordinal,  $ (\mathcal{C} \upharpoonright\beta)\in 
\mathbb{RS}(D)  $ for $ \beta<\alpha $ and  
$ \left\langle D\circlearrowleft  (\mathcal{C}\upharpoonright \beta): \beta<\alpha   \right\rangle $ has a limit which is $ D^{*} $.
\end{itemize}

Note that if $ \mathcal{C}\in \mathbb{RS}(D)  $, then for every $ e\in \mathsf{Un}(D) $,  there can be only finitely many $ 
C_\beta $ that contain 
an orientation of $ e $.  We say that a $ 
\mathcal{C}\in \mathbb{RS}(D)  $ \textbf{touches} (uses) the edge $ e $ $ n $-times if 
there are exactly $ n $ cycles in the sequence   $ \mathcal{C} $ that contain an orientation of $ e $.  Also, we define $ 
\boldsymbol{E(\mathcal{C}) }$ to be $ \bigcup _{C\in 
\mathsf{ran}(\mathcal{C})}\mathsf{Un}(C) $.  

A reorientation $ D^{*} $ of $ 
 D $ is \textbf{reachable} from $ D $ if $ D^{*}=D\circlearrowleft 
 \mathcal{C} $ for a suitable $ \mathcal{C}\in \mathbb{RS}(D) $. We say that $ D^{*} $ is \textbf{locally reachable} from $ D 
 $ if 
 for all finite $ 
 F\subseteq \mathsf{Un}(D) $ there is a   $ \mathcal{C}\in \mathbb{RS}(D) $ such that 
 $ (D\circlearrowleft\mathcal{C})(F)=D^{*}(F) $.

\section{Reversion sequences and local reachability}

In this subsection we summarize some structural properties of reversion sequences. First of all, note that if $ \mathcal{E} $  is a 
rearrangement of some $ \mathcal{C}\in \mathbb{RS}(D) $ 
 in such a way that $ 
 \mathcal{E}\in \mathbb{RS}(D) $, then 
 necessarily $ D \circlearrowleft \mathcal{C}=D \circlearrowleft \mathcal{E} $. Indeed, the finial orientation of an $ e\in 
 \mathsf{Un}(D) $ depends 
 only on $ D(e) $ and on the parity of the number of cycles in the sequence containing some orientation of $ e $.
\begin{thm}[Theorem 8.4 of \cite{ellis2019cycle}]\label{DEomegaRev}
Suppose that $ \mathcal{C}\in \mathbb{RS}(D) $ is countable. Then there is a rearrangement $ \mathcal{E}\in \mathbb{RS}(D) 
$ of $\mathcal{C}$ of type $\leq \omega $.
\end{thm}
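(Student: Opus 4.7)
The plan is to reduce this theorem to a standard order-theoretic fact by characterising the valid rearrangements as linear extensions of a natural partial order. Define a relation on the cycles of $\mathcal{C}=\langle C_\xi : \xi < \alpha\rangle$ by
\[C_i \prec C_j \iff i < j \text{ and } \mathsf{Un}(C_i)\cap \mathsf{Un}(C_j)\neq \emptyset,\]
and let $\prec^{*}$ denote its transitive closure. Since $\prec^{*}$ refines the well-ordering on indices it is well-founded, and because $\mathcal{C}\in \mathbb{RS}(D)$ each edge of $D$ is touched by only finitely many cycles of $\mathcal{C}$; combined with the finiteness of each cycle this shows that every cycle has only finitely many $\prec$-predecessors, and then an induction along $\prec^{*}$ extends this to $\prec^{*}$.

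The first step of the proof is a characterisation lemma: a rearrangement $\mathcal{E}$ of $\mathcal{C}$ lies in $\mathbb{RS}(D)$ if and only if it is a linear extension of $\prec^{*}$. The forward direction is immediate---swapping two cycles $C_i, C_j$ with $C_i \prec C_j$ would mean that at the moment $C_j$ is applied the shared edge has been reversed the wrong number of times, so $C_j$ fails to be a directed cycle. For the converse, let $\mathcal{E}=\langle E_n : n<\omega\rangle$ be any $\prec^{*}$-extending rearrangement, and write $E_n = C_{\xi_n}$. A counting argument shows that for each $e\in \mathsf{Un}(E_n)$, the cycles of $\mathcal{C}$ containing $e$ which appear before $E_n$ in $\mathcal{E}$ are precisely the cycles $C_j$ containing $e$ with $j < \xi_n$, because $\prec$ forces any two cycles sharing $e$ into their original relative order. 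Hence the parity of reversals on $e$ before position $n$ in $\mathcal{E}$ agrees with the parity before position $\xi_n$ in $\mathcal{C}$, so the orientations of $e$ coincide in the two intermediate states; this makes $E_n$ a directed cycle in $D \circlearrowleft(\mathcal{E}\upharpoonright n)$.

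The second step invokes the standard order-theoretic fact that every countable well-founded poset in which each element has only finitely many predecessors admits a linear extension of order type at most $\omega$. Explicitly, one enumerates the cycles as $\{C^{*}_n : n < \omega\}$ and builds the extension in stages, at each stage placing the least-indexed unplaced cycle all of whose $\prec^{*}$-predecessors have already been placed; finiteness of predecessor sets together with well-foundedness guarantees that every cycle eventually becomes eligible and is thus placed. Reading this extension off as a sequence yields the desired $\mathcal{E}$.

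The main obstacle in this plan is the converse of the characterisation lemma, i.e., the claim that every $\prec^{*}$-extending rearrangement is a valid reversion sequence. This requires the parity bookkeeping sketched above, which in turn depends essentially on the local finiteness built into the definition of $\mathbb{RS}(D)$; without that assumption one could not even make sense of the limit orientations, let alone match parities.
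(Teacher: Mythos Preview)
The paper does not give its own proof of this statement; it is imported verbatim from \cite{ellis2019cycle}, so there is no in-paper argument to compare against. Your approach is the natural one and does prove the theorem, but the ``characterisation lemma'' as stated is too strong: the forward direction is false.

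For a concrete counterexample take two vertices $u,v$ and four edges, oriented in $D$ as $e_1,e_3:u\to v$ and $e_2,e_4:v\to u$. Let $C_1=\{e_1,e_2\}$, then $C_2=\{e_1,e_3\}$ (with $e_1$ now pointing $v\to u$), then $C_3=\{e_1,e_4\}$ (with $e_1$ back to $u\to v$). All three share the underlying edge of $e_1$, so $C_1\prec C_2\prec C_3$ and $C_1\prec C_3$. Yet one checks directly that $(C_3,C_2,C_1)\in\mathbb{RS}(D)$: in $D$ the set $C_3$ is already a directed cycle, after reversing it $C_2$ is a directed cycle, and so on. So a valid rearrangement need not extend $\prec^{*}$. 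Your sketch of the forward direction overlooks that other cycles touching the shared edge may restore the needed parity.

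This does no damage to the theorem: you only ever use the converse direction, and your parity argument for that is correct. Indeed, if $\mathcal{E}$ extends $\prec$ then for every edge $e$ of $E_n=C_{\xi_n}$ the cycles of $\mathcal{E}$ before position $n$ that touch $e$ are exactly the $C_j$ with $j<\xi_n$ touching $e$, so the orientation of $e$ at that moment agrees with its orientation in $D\circlearrowleft(\mathcal{C}\upharpoonright\xi_n)$. Just downgrade your lemma from ``if and only if'' to the sufficient condition you actually need, and drop the forward-direction sketch. The remaining steps---finiteness of $\prec^{*}$-predecessor sets by transfinite induction on the index, and the greedy construction of an $\omega$-type linear extension---are fine.
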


For every $ \mathcal{C}\in \mathbb{RS}(D) $, the edge sets of the members of $\mathcal{C}$ form a locally finite hypergraph 
whose connected components must be countable by König's Lemma. The subsequence of  $\mathcal{C}$ corresponding to a 
fixed component is clearly in $\mathbb{RS}(D)$. Moreover, these subsequences are pairwise edge-disjoint by construction. 
Applying Theorem \ref{DEomegaRev} to each of them and well-order the resulting sequences yields to the following extension 
of Theorem \ref{DEomegaRev}.
\begin{thm}\label{countable decomp}
 Let $ D $ be a digraph and let $ \mathcal{C}\in \mathbb{RS}(D) $ with $\left|\mathcal{C}\right|=:\kappa\geq \aleph_0 $. 
 Then there is a rearrangement $ \mathcal{E}\in\mathbb{RS}(D) $ of $ \mathcal{C} $ with length $ \kappa $.  Moreover, if 
 $ \kappa>\aleph_0 $  then one can choose $ \mathcal{E}$ in such a 
 way that 
 for $ \alpha<\kappa $, 
 \[ \mathcal{E}_\alpha:=\left\langle C_{\omega \alpha+n}:  n<\omega\right\rangle \in \mathbb{RS}(D), \] 
 and $ E(\mathcal{E}_\beta)\cap E(\mathcal{E}_\alpha)=\varnothing $ if $ \beta<\alpha<\kappa $ (where $ \omega \alpha$ 
 stands for ordinal 
 multiplication).
 \end{thm}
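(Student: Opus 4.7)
The plan is exactly the one sketched in the paragraph immediately before the theorem: decompose $\mathcal{C}$ into edge-disjoint countable blocks coming from the connected components of a natural cycle-incidence hypergraph, rearrange each block via Theorem~\ref{DEomegaRev}, and concatenate the results. Define the hypergraph $H$ with vertex set $E(\mathcal{C})\subseteq\mathsf{Un}(D)$ and hyperedges $\mathsf{Un}(C_\xi)$ for $\xi<\kappa$. Since $\mathcal{C}\in\mathbb{RS}(D)$, every edge of $\mathsf{Un}(D)$ is touched by only finitely many of the $C_\xi$'s, and every $\mathsf{Un}(C_\xi)$ is finite. Hence the $2$-section graph of $H$ is locally finite, so by König's lemma each connected component of $H$ has countably many vertices and consequently contains only countably many hyperedges.

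For each component $K$ of $H$, let $\mathcal{C}_K$ be the sub-sequence of $\mathcal{C}$ consisting of those $C_\xi$ with $\mathsf{Un}(C_\xi)\subseteq K$. I would verify by transfinite induction along $\mathcal{C}_K$ that $\mathcal{C}_K\in\mathbb{RS}(D)$: cycles of $\mathcal{C}$ not in $\mathcal{C}_K$ are edge-disjoint from $K$, so their reversals never alter the orientation of any edge of $K$; therefore the orientation of $D$ restricted to $K$ after applying the initial segment $\mathcal{C}_K\!\upharpoonright\!\beta$ coincides with its orientation after the corresponding initial segment of $\mathcal{C}$, and in particular the next cycle listed in $\mathcal{C}_K$ is still a directed cycle at the moment it must be applied. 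Stabilization at the single limit $\omega$ is automatic since each edge of $K$ was already touched only finitely often in $\mathcal{C}$. Now apply Theorem~\ref{DEomegaRev} to each $\mathcal{C}_K$ to obtain a rearrangement $\mathcal{F}_K\in\mathbb{RS}(D)$ of type at most $\omega$.

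Finally, I have to assemble the $\mathcal{F}_K$'s into a sequence of the prescribed form. Since $|\mathcal{C}|=\kappa>\aleph_0$ and each component is countable, the number of components is exactly $\kappa$. Partition the set of components into $\kappa$ groups $\{G_\alpha:\alpha<\kappa\}$, each group countable and totalling exactly $\omega$ many cycles; this is elementary combinatorics on $\kappa$ countable sets, merging finite components as needed. For each $\alpha<\kappa$ let $\mathcal{E}_\alpha$ be the concatenation in some fixed $\omega$-order of the $\mathcal{F}_K$'s with $K\in G_\alpha$. By edge-disjointness of distinct components, $\mathcal{E}_\alpha\in\mathbb{RS}(D)$ and $E(\mathcal{E}_\alpha)\cap E(\mathcal{E}_\beta)=\varnothing$ whenever $\alpha\neq\beta$. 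The concatenation $\mathcal{E}:=\mathcal{E}_0{}^{\frown}\mathcal{E}_1{}^{\frown}\cdots$ then has length $\omega\cdot\kappa=\kappa$, lies in $\mathbb{RS}(D)$ (stabilization at each limit uses edge-disjointness of blocks together with the finite-touching property inside each block), and gives the required rearrangement of $\mathcal{C}$. The main subtle point in this argument is the transfinite induction verifying $\mathcal{C}_K\in\mathbb{RS}(D)$ and, analogously, the closure of $\mathbb{RS}(D)$ under concatenation of edge-disjoint sub-sequences; once these are in place the rest is bookkeeping.
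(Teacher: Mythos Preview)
Your proposal is correct and follows essentially the same route as the paper: form the locally finite hypergraph on $E(\mathcal{C})$ with hyperedges $\mathsf{Un}(C_\xi)$, use K\"onig's lemma to get countable components, observe that the subsequence $\mathcal{C}_K$ of each component lies in $\mathbb{RS}(D)$ by edge-disjointness, apply Theorem~\ref{DEomegaRev} to each, and concatenate. The paper's argument is the one-paragraph sketch immediately preceding the theorem; you have simply fleshed out the bookkeeping (the transfinite induction for $\mathcal{C}_K\in\mathbb{RS}(D)$ and the grouping of components into $\omega$-sized blocks) that the paper leaves implicit in the phrase ``well-order the resulting sequences''. One small point to watch in your write-up: when concatenating the $\mathcal{F}_K$'s inside a group $G_\alpha$, make sure any $\mathcal{F}_K$ of length $\omega$ comes last (or else re-apply Theorem~\ref{DEomegaRev} to the whole group), so that the resulting $\mathcal{E}_\alpha$ genuinely has order type $\omega$ rather than $\omega+n$.
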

  By sorting the $ \omega $-intervals above according  if it uses  
 an edge 
 from a 
 given edge set $ E $ or not we obtain the following.
 \begin{cor}\label{small C}
Assume that $ D $ is a digraph, $ E\subseteq \mathsf{Un}(D) $ and $ \mathcal{C}\in \mathbb{RS}(D) $. Then there is a 
rearrangement 
$ \mathcal{E} $ of $\mathcal{C} $ such that $ \mathcal{E}={\mathcal{E}_E}^\frown   \mathcal{E}_{\neg E} $ where

\begin{itemize}
\item $\mathcal{E}_E,  \mathcal{E}_{\neg E}\in 
\mathbb{RS}(D)  $
\item $ E(\mathcal{E}_E)\cap E(\mathcal{E}_{\neg E})=\varnothing $,
\item $ E\cap E(\mathcal{E}_{\neg E})=\varnothing $,
\item the length of $  \mathcal{E}_E $ is $ \left|\mathcal{E}_E \right| \leq \left|E\right|+\aleph_0$.
\end{itemize}
Note that $ (D \circlearrowleft \mathcal{E})(E)= (D \circlearrowleft \mathcal{E}_E)(E) $ follows.
 \end{cor}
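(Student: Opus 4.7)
The plan is to apply Theorem~\ref{countable decomp} and then sort the resulting $\omega$-blocks according to whether they touch $E$. When $\mathcal{C}$ is finite the claim is immediate by listing the (finitely many) cycles touching $E$ first, so I assume $|\mathcal{C}|=\kappa\geq\aleph_0$. For uncountable $\kappa$, Theorem~\ref{countable decomp} directly gives a rearrangement $\mathcal{E}'=\langle C_\xi:\xi<\kappa\rangle\in\mathbb{RS}(D)$ whose $\omega$-blocks $\mathcal{E}'_\alpha:=\langle C_{\omega\alpha+n}:n<\omega\rangle$ are pairwise edge-disjoint and each lies in $\mathbb{RS}(D)$. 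For $\kappa=\aleph_0$ I would use the connected-component decomposition of the locally finite hypergraph $\{\mathsf{Un}(C_\xi)\}$ remarked on before Theorem~\ref{countable decomp}: each component is countable, yields an edge-disjoint subsequence in $\mathbb{RS}(D)$, and applying Theorem~\ref{DEomegaRev} to each component produces the analogous partition into blocks $\mathcal{E}'_\alpha$ of type $\leq\omega$.

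Now partition the index set into $A:=\{\alpha:E(\mathcal{E}'_\alpha)\cap E\neq\varnothing\}$ and its complement $B$. Because the edge sets $E(\mathcal{E}'_\alpha)$ are pairwise disjoint and each $\alpha\in A$ contributes at least one edge of $E$, the injection $\alpha\mapsto\text{(some edge of }E(\mathcal{E}'_\alpha)\cap E)$ witnesses $|A|\leq|E|+\aleph_0$. Define $\mathcal{E}_E$ to be the concatenation of $\mathcal{E}'_\alpha$ for $\alpha\in A$ in the inherited order, and $\mathcal{E}_{\neg E}$ analogously for $\alpha\in B$, and set $\mathcal{E}:=\mathcal{E}_E{}^\frown\mathcal{E}_{\neg E}$. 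This is a rearrangement of $\mathcal{C}$, and $|\mathcal{E}_E|\leq|A|\cdot\aleph_0\leq|E|+\aleph_0$ as required.

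The step that actually needs verification is $\mathcal{E}_E,\mathcal{E}_{\neg E}\in\mathbb{RS}(D)$. The key observation is that within either concatenation, each edge of $\mathsf{Un}(D)$ is touched in at most one block (by edge-disjointness of the $\mathcal{E}'_\alpha$), and within that block only finitely often (since each block is itself in $\mathbb{RS}(D)$). Consequently, at every limit ordinal of the concatenation, every edge's orientation has already stabilized, so the limit required by the definition of $\mathbb{RS}(D)$ exists; a routine induction on the length then shows both concatenations are reversion sequences on $D$. The remaining disjointness conditions $E(\mathcal{E}_E)\cap E(\mathcal{E}_{\neg E})=\varnothing$ and $E\cap E(\mathcal{E}_{\neg E})=\varnothing$ follow at once from the block partition and the definition of $A$ and $B$.

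The only non-routine point is confirming that rearranging the $\omega$-blocks into the order $A$-then-$B$ preserves membership in $\mathbb{RS}(D)$; but this reduces entirely to edge-disjointness of the blocks, which is exactly what Theorem~\ref{countable decomp} was designed to deliver. Everything else is bookkeeping on cardinalities.
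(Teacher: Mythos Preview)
Your approach is exactly the paper's: its entire proof is the one-sentence remark preceding the corollary that one sorts the $\omega$-intervals produced by Theorem~\ref{countable decomp} according to whether they meet $E$, and you have simply spelled out the bookkeeping (including the component argument for the countable case) that the paper leaves implicit.

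One small slip worth fixing: in the finite case, merely ``listing the cycles touching $E$ first'' need not yield $E(\mathcal{E}_E)\cap E(\mathcal{E}_{\neg E})=\varnothing$, since a cycle meeting $E$ may share an edge $f\notin E$ with a cycle avoiding $E$. The trivial repair is either to use the component decomposition in the finite case as well, or simply to set $\mathcal{E}_E:=\mathcal{C}$ and $\mathcal{E}_{\neg E}:=\varnothing$ (the length bound $|\mathcal{E}_E|\leq|E|+\aleph_0$ is then automatic).
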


Now we turn our attention to properties related to local reachability.
 \begin{prop}\label{lim loc reach}
 Assume that $ \left\langle D_{\xi}: \xi<\alpha\right\rangle $ is a non-empty sequence of 
 orientations of a graph $ G $ and $D$ is an orientation of $G$ for which there is a $\xi_0<\alpha$ such that for all $\xi$ with 
 $\xi_0< \xi <\alpha $, $ D_\xi $ is locally reachable from $ D $.  Then
 every generalized limit of the sequence $ \left\langle D_{\xi}: \xi<\alpha\right\rangle $ is locally reachable from $ D $.
 \end{prop}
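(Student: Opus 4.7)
The statement really just unpacks the two definitions involved (generalized limit and local reachability), so I expect a one-shot proof with no auxiliary construction. The plan is as follows.

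Fix an arbitrary finite $F \subseteq \mathsf{Un}(D)$ and let $D^{*}$ be a generalized limit of $\left\langle D_\xi : \xi<\alpha \right\rangle$. My goal is to produce a single $\mathcal{C}\in \mathbb{RS}(D)$ with $(D\circlearrowleft \mathcal{C})(F)=D^{*}(F)$; since $F$ was arbitrary, this will witness local reachability of $D^{*}$ from $D$.

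By the very definition of generalized limit, the set $S_F:=\{\xi<\alpha : D_\xi(F)=D^{*}(F)\}$ is unbounded in $\alpha$. In particular, because $\xi_0<\alpha$, I can pick some $\xi \in S_F$ with $\xi>\xi_0$. For this $\xi$, the hypothesis gives that $D_\xi$ is locally reachable from $D$, so applied to the finite set $F$ it yields a $\mathcal{C}\in \mathbb{RS}(D)$ with $(D\circlearrowleft \mathcal{C})(F)=D_\xi(F)$. Combining the two equalities,
\[ (D\circlearrowleft \mathcal{C})(F)\;=\;D_\xi(F)\;=\;D^{*}(F), \]
which is exactly what is needed.

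There is really no technical obstacle here; the only care required is noting that the hypothesis ``$\xi_0<\alpha$'' together with the unboundedness in the definition of generalized limit is precisely what guarantees the existence of a suitable $\xi\in S_F$ strictly above $\xi_0$. No choice of a coherent $\mathcal{C}$ across different $F$'s is needed since local reachability is tested one finite set at a time.
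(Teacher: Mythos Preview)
Your proof is correct and follows essentially the same approach as the paper's: pick a finite $F$, use unboundedness of the generalized-limit condition to find $\xi>\xi_0$ with $D_\xi(F)=D^{*}(F)$, and then invoke local reachability of $D_\xi$ from $D$. The only difference is that you spell out the final chain of equalities explicitly, whereas the paper leaves it implicit.
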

 \begin{proof}
 Let $ F \subseteq G$ be finite. By the definition of the generalized limit we can pick some $\xi$ with $ \xi_0<\xi<\alpha $ such 
 that 
 $ D_\xi(F)=D^{*}(F) $ and  $ D_\xi $ is locally reachable from $ D $.
 \end{proof}
  
 \begin{prop}\label{lok reach good orient}
 For every digraph $ D $, there is a reorientation $ D^{*} $ of $ D $ which is locally reachable from $ D $ and
   $ \chi(D^{*})\leq 2 $.
 \end{prop}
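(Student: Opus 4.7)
The plan is to combine Charbit's finite argument on finite induced subdigraphs with an ultrafilter-based compactness assembly in order to produce a single globally good reorientation $D^*$.

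First, fix an arbitrary linear order $<$ on $V(D)$. For each finite $V_0\subseteq V(D)$, apply Charbit's finite argument (the one following Theorem \ref{dich at most k}) to $D[V_0]$: starting from $D[V_0]$, iteratively reverse any directed cycle $C$ with fewer than $|C|/2$ forward edges with respect to $<$. Each such reversion strictly increases the (finite) total number of forward edges in $D[V_0]$, so this terminates after finitely many steps, yielding an orientation $D^*_{V_0}$ of $D[V_0]$ reachable from $D[V_0]$ via a finite reversion sequence $\mathcal{C}_{V_0}\in\mathbb{RS}(D[V_0])$ such that every directed cycle of $D^*_{V_0}$ has at least half of its edges forward.

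Second, fix an ultrafilter $\mathcal{U}$ on $[V(D)]^{<\omega}$ containing every set of the form $\{V_0 : V_1\subseteq V_0\}$ with $V_1\in [V(D)]^{<\omega}$. For each edge $e$ of $\mathsf{Un}(D)$, the set $\{V_0 : V(e)\subseteq V_0\}$ is in $\mathcal{U}$, so exactly one of the two orientations $\overrightarrow{e}$ of $e$ satisfies $\{V_0 : V(e)\subseteq V_0,\ D^*_{V_0}(e)=\overrightarrow{e}\}\in\mathcal{U}$; declare that orientation to be $D^*(e)$. This defines a reorientation $D^*$ of $D$, and for every finite $F\subseteq\mathsf{Un}(D)$ the set $\{V_0 : V(F)\subseteq V_0,\ D^*_{V_0}(F)=D^*(F)\}$ is a finite intersection of members of $\mathcal{U}$, hence nonempty.

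To finish, I would verify the two required properties. For local reachability, given a finite $F$, choose any $V_0$ from the above nonempty set; the sequence $\mathcal{C}_{V_0}$ viewed inside $D$ lies in $\mathbb{RS}(D)$, since every cycle it contains is supported on $V_0$, so the intermediate orientations of $D$ restricted to $V_0$ coincide with those produced inside $D[V_0]$. Hence $(D\circlearrowleft \mathcal{C}_{V_0})(F)=D^*_{V_0}(F)=D^*(F)$. For $\chi(D^*)\leq 2$, note that any directed cycle $C$ of $D^*$ is finite; choosing $V_0\supseteq V(C)$ with $D^*_{V_0}(\mathsf{Un}(C))=D^*(\mathsf{Un}(C))$ shows that $C$ is also a directed cycle of $D^*_{V_0}$, hence has at least $|C|/2$ forward edges. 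Theorem \ref{dich at most k} (extended to infinite digraphs by compactness, as noted in the paper) then yields $\chi(D^*)\leq 2$. The main obstacle is the lack of any a priori compatibility between the local Charbit solutions $D^*_{V_0}$ for different $V_0$; the ultrafilter (or, equivalently, an elementary submodel argument) is what picks out one coherent global $D^*$, after which lifting the finite sequence $\mathcal{C}_{V_0}$ from $D[V_0]$ to $D$ is essentially formal.
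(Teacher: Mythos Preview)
Your approach is the same ultrafilter compactness argument as the paper's, with one genuine slip: you index by finite \emph{vertex} sets $V_0$ and apply Charbit's termination argument to the induced subdigraph $D[V_0]$. Since the paper allows parallel edges, $D[V_0]$ need not be finite. Concretely, take $u<v<w$ with infinitely many $v\to u$, $u\to w$ and $w\to v$ edges; every $3$-cycle has only one forward edge, each reversion increases the forward count by one, but the total is infinite and the process never terminates. The paper sidesteps this by ranging over finite \emph{edge} sets $H\in[D]^{<\aleph_0}$ instead; with that replacement your argument goes through verbatim.

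A smaller difference worth noting: to certify $\chi(D^*)\le 2$ you keep the single global order $<$, show every cycle of $D^*$ has at least half its edges forward, and then invoke the infinite form of Theorem~\ref{dich at most k}. The paper instead also pushes the local $2$-colourings $c_H$ through the ultrafilter to obtain an explicit global $2$-colouring. Both are valid; your route is a bit cleaner in that nothing besides $D^*$ needs to be assembled, at the price of quoting the (stated but unproved) infinite version of Theorem~\ref{dich at most k}.
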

\begin{proof}
For finite $ D $, we simply apply Theorem \ref{finite thm}. For an infinite  $ D $, one can use standard compactness arguments. 
Indeed, take for 
example an ultrafilter $ \mathcal{U} $ on the finite subsets $ \left[D \right]^{<\aleph_0}  $ of $ D $ containing the set
$ \mathcal{X}_{\overrightarrow{e}}=\{ H\in\left[D \right]^{<\aleph_0} :  \overrightarrow{e}\in H \} $ for each $  
\overrightarrow{e}\in D $.
For every $ {H\in \left[D \right]^{<\aleph_0}}  $, we use Theorem \ref{finite thm} to fix a 
$ \mathcal{C}_H\in \mathbb{RS}(H) $ and a two-colouring $ c_H: V(H) 
\rightarrow 2$ such that $ c $ witnesses $ {\chi(H\circlearrowleft \mathcal{C}_H)\leq 2} $. It is easy to check that the unique 
orientation $ D^{*} $ 
satisfying
\[\{ H\in \mathcal{X}_{\overrightarrow{e}}:  D^{*}(e)=(H\circlearrowleft \mathcal{C}_H)(e)  \}\in \mathcal{U}\]
for every $ e\in \mathsf{Un}(D) $ is locally reachable from $ D $ and $ \chi(D^{*})\leq 2 $ witnessed by the unique $ c: 
V(D)\rightarrow 2 $ for which for every $ 
v\in V(D) $
\[\{ H\in \left[D \right]^{<\aleph_0}:v\in V(H) \wedge c_H(v)=c(v) \}\in \mathcal{U}.\]

\end{proof}

 \begin{prop}\label{fin many cycle}
  Let $ D $ be a digraph and let $ \mathcal{C}\in \mathbb{RS}(D) $ be finite. Then there is a $ \mathcal{E}\in 
  \mathbb{RS}(D)  $ which 
  consists of finitely many edge-disjoint cycles of $ D $ in some order such that 
  $ D\circlearrowleft \mathcal{E}=D\circlearrowleft \mathcal{C} $.
 \end{prop}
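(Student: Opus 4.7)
The plan is to use a parity/Eulerian-decomposition argument. As observed right before Theorem \ref{DEomegaRev}, the final orientation $D^{*} = D \circlearrowleft \mathcal{C}$ depends only on $D$ and on the parity, for each $e\in \mathsf{Un}(D)$, of the number of cycles in $\mathcal{C}$ containing an orientation of $e$. Since $\mathcal{C}$ is finite (and each cycle has finitely many edges), the set
$$F := \{ e\in \mathsf{Un}(D) : D(e)\neq D^{*}(e)\}$$
is a finite subset of $\mathsf{Un}(D)$. My goal is to exhibit finitely many edge-disjoint directed cycles of $D$ whose edge sets partition $F$ and that occur with their $D$-orientation; then reversing them in any order inside $D$ produces precisely $D^{*}$.

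The main step is to check that the subdigraph $D(F)\subseteq D$ is Eulerian, i.e.\ at every vertex $v$ the number of $F$-edges outgoing in $D$ equals the number of $F$-edges incoming in $D$. For this I would use that a sequence of cycle reversions preserves the in-degree and the out-degree at every vertex (each cycle contributes $1$ to both at every vertex it meets, and reversing merely swaps the contribution). Hence $d^{+}_{D}(v) = d^{+}_{D^{*}}(v)$ for all $v$. Restricting this equality to the edges of $F$ (on the $\mathsf{Un}(D)\setminus F$ part the two orientations agree) gives the required balance.

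Once $D(F)$ is known to be Eulerian, I invoke the standard fact that a finite Eulerian digraph decomposes into edge-disjoint directed cycles: starting from any edge, follow outgoing edges (possible by balance at each intermediate vertex) until a vertex is revisited, extract the resulting directed cycle, delete its edges, and iterate. This yields edge-disjoint directed cycles $C_1, \ldots, C_k$ of $D$ with $\bigcup_i \mathsf{Un}(C_i) = F$.

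Finally, I would set $\mathcal{E} := \langle C_1, \ldots, C_k \rangle$ and verify $\mathcal{E}\in \mathbb{RS}(D)$. Because the $C_i$'s are pairwise edge-disjoint, reversing $C_1, \ldots, C_{i-1}$ leaves the orientation of every edge of $C_i$ unchanged, so $C_i$ remains a directed cycle in $D\circlearrowleft \langle C_1,\ldots, C_{i-1}\rangle$. After all $k$ reversions, every edge in $F$ has been flipped exactly once and every edge outside $F$ has been flipped zero times, so $D\circlearrowleft \mathcal{E} = D^{*} = D\circlearrowleft \mathcal{C}$. There is no serious obstacle here; the only thing requiring attention is the degree-balance verification, which reduces to the elementary observation that cycle reversions preserve in- and out-degrees.
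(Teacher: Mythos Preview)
Your proposal is correct and follows essentially the same approach as the paper: identify the finite set of edges whose orientation changed, observe that this subdigraph of $D$ is balanced (the paper phrases this as an induction on the length of $\mathcal{C}$, you phrase it via degree-preservation under cycle reversion), and then decompose it greedily into edge-disjoint directed cycles. One small remark: the sentence ``restricting this equality to the edges of $F$'' is slightly informal when $D$ has vertices of infinite degree, since one cannot literally cancel an infinite summand; the clean statement is that each cycle reversion changes the out-degree at every vertex by zero, hence the net change $b-a$ in out-degree coming from the finitely many $F$-edges must be zero.
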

 \begin{proof}
We need to show that the set of edges  we  reverse  by applying $ 
\mathcal{C} $ to $ D $, namely set $ {R:= D\setminus(D\circlearrowleft \mathcal{C})} $, can be partitioned into cycles.  One 
can 
 show by induction on the length of $ \mathcal{C} $ that for each vertex $ v\in V(D) $, the number of the ingoing and 
 outgoing  edges of $ v $ in $ R $ are equal. Since $ R $ is finite, it follows  that the desired partition  can be constructed   
 ``greedily''. 
 \end{proof}
 \begin{prop}[Corollary 8.5 in \cite{ellis2019cycle}]\label{finite enough}
  Let $ D $ be a digraph, $ \mathcal{C}\in \mathbb{RS}(D) $, and let $ F\subseteq \mathsf{Un}(D) $ be 
  finite. Then there is a finite subsequence $ \mathcal{C}_F \in \mathbb{RS}(D)  $ of $ \mathcal{C} $ such that 
  $ (D\circlearrowleft \mathcal{C})(F)=(D\circlearrowleft \mathcal{C}_F)(F) $.
  \end{prop}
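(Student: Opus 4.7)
The plan is to prove the statement by transfinite induction on the length $\alpha$ of $\mathcal{C}$, uniformly in the finite target set. The base case $\alpha=0$ is trivial with $\mathcal{C}_F=\varnothing$. The key conceptual move appears in the successor step: the induction hypothesis must be applied to a slightly enlarged finite edge set rather than to $F$ itself, so that the last cycle we append remains a directed cycle of the shorter extracted orientation.

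For the successor case $\alpha=\beta+1$, set $E:=F\cup \mathsf{Un}(C_\beta)$, still finite. Apply the induction hypothesis to $\mathcal{C}\upharpoonright \beta$ with target $E$ to obtain a finite subsequence $\mathcal{C}'_E\in \mathbb{RS}(D)$ of $\mathcal{C}\upharpoonright\beta$ satisfying $(D\circlearrowleft \mathcal{C}'_E)(E)=(D\circlearrowleft (\mathcal{C}\upharpoonright\beta))(E)$. Put $\mathcal{C}_F:=\mathcal{C}'_E{}^\frown(C_\beta)$. Since the two orientations agree on $\mathsf{Un}(C_\beta)\subseteq E$, the cycle $C_\beta$ is still a directed cycle in $D\circlearrowleft \mathcal{C}'_E$, so $\mathcal{C}_F\in \mathbb{RS}(D)$; and reversing $C_\beta$ starting from two orientations that agree on $E$ yields orientations that again agree on $E\supseteq F$.

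For the limit case, let $\alpha$ be a limit ordinal. Because each edge is touched by only finitely many cycles in $\mathcal{C}$ and $F$ is finite, the set $\{\beta<\alpha: C_\beta \text{ touches some edge of } F\}$ is finite. Let $\beta^*$ be one greater than its maximum (or $0$ if it is empty); since $\alpha$ is a limit, $\beta^*<\alpha$. No cycle $C_\beta$ with index $\geq\beta^*$ touches $F$, hence $(D\circlearrowleft(\mathcal{C}\upharpoonright\beta^*))(F)=(D\circlearrowleft\mathcal{C})(F)$, and the induction hypothesis applied to the strictly shorter $\mathcal{C}\upharpoonright\beta^*$ with target $F$ finishes the step.

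The only real obstacle is recognizing the need to strengthen the induction hypothesis to allow arbitrary finite target edge sets; without this, the appended cycle $C_\beta$ in the successor case could fail to be directed in the shorter orientation. Everything else follows immediately from the definitions: the resulting $\mathcal{C}_F$ is order-preservingly a subsequence of $\mathcal{C}$, since at each step we either extract from an initial segment and append a single cycle of larger index, or we pass to a strictly shorter initial segment.
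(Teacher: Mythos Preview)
Your proof is correct. The paper itself does not supply a proof of this proposition; it merely cites it as Corollary~8.5 of \cite{ellis2019cycle}, so there is no in-paper argument to compare against. Your transfinite induction on the length of $\mathcal{C}$, with the essential enlargement of the target set to $E=F\cup\mathsf{Un}(C_\beta)$ at successor steps so that $C_\beta$ remains a directed cycle after applying the extracted subsequence, is the natural argument and goes through cleanly.
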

  \begin{cor}\label{loc reach trans}
 The local reachability relation is transitive, i.e., if $ D^{*}$ is locally reachable from $ D $ and $ D^{**} $ is locally reachable 
 from $ D^{*} $, 
 then 
 $ D^{**} $ is locally 
  reachable from 
  $ D $.
  \end{cor}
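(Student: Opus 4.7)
The plan is to fix an arbitrary finite $F \subseteq \mathsf{Un}(D)$ and exhibit some $\widetilde{\mathcal{C}} \in \mathbb{RS}(D)$ witnessing $(D \circlearrowleft \widetilde{\mathcal{C}})(F) = D^{**}(F)$. I would proceed in three stages. First, using local reachability of $D^{**}$ from $D^{*}$, pick some $\mathcal{C}' \in \mathbb{RS}(D^{*})$ with $(D^{*} \circlearrowleft \mathcal{C}')(F) = D^{**}(F)$. By Proposition \ref{finite enough}, there is a finite subsequence $\mathcal{C}'_F \in \mathbb{RS}(D^{*})$ of $\mathcal{C}'$ such that $(D^{*} \circlearrowleft \mathcal{C}'_F)(F) = D^{**}(F)$; in particular $\mathcal{C}'_F$ touches only finitely many edges. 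Set $F' := F \cup E(\mathcal{C}'_F)$, which is a finite subset of $\mathsf{Un}(D)$.

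Second, using local reachability of $D^{*}$ from $D$ applied to $F'$, pick some $\mathcal{C} \in \mathbb{RS}(D)$ with $(D \circlearrowleft \mathcal{C})(F') = D^{*}(F')$. I claim the concatenation $\widetilde{\mathcal{C}} := \mathcal{C}^{\frown} \mathcal{C}'_F$ lies in $\mathbb{RS}(D)$. Indeed, proceed by induction on the position $i$ in the finite sequence $\mathcal{C}'_F$: assuming the first $i$ cycles of $\mathcal{C}'_F$ have been legally reversed starting from $D \circlearrowleft \mathcal{C}$, the current orientation agrees with $D^{*} \circlearrowleft (\mathcal{C}'_F \upharpoonright i)$ on $F'$, because both orientations are obtained from orientations agreeing on $F'$ by flipping the same edges, all of which lie in $F'$. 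Since the $(i{+}1)$-st cycle is a directed cycle in $D^{*} \circlearrowleft (\mathcal{C}'_F \upharpoonright i)$ and uses only edges in $F'$, it remains a directed cycle in the current orientation and may legally be reversed.

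The same induction shows $(D \circlearrowleft \widetilde{\mathcal{C}})(F') = (D^{*} \circlearrowleft \mathcal{C}'_F)(F')$, and restricting to $F \subseteq F'$ gives $(D \circlearrowleft \widetilde{\mathcal{C}})(F) = D^{**}(F)$, as required. I do not foresee any genuine obstacle here; the entire argument hinges on using Proposition \ref{finite enough} to shrink the $D^{*} \to D^{**}$ portion to a \emph{finite} sequence of cycles, so that only finitely many additional edges need to be pre-aligned by the $D \to D^{*}$ step before concatenation works.
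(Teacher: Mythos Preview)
Your proof is correct and follows essentially the same approach as the paper: shrink the $D^{*}\to D^{**}$ witness to a finite sequence via Proposition~\ref{finite enough}, enlarge $F$ to include all edges touched by that finite sequence, align on this larger set using local reachability of $D^{*}$ from $D$, and concatenate. The only difference is that you spell out in detail the inductive verification that the concatenation lies in $\mathbb{RS}(D)$, which the paper leaves implicit.
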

  \begin{proof}
  Let $ F\subseteq \mathsf{Un}(D) $ be finite. Pick a $ \mathcal{C}_F\in \mathbb{RS}(D^{*}) $ for $ D^{*} $ and $ D^{**} $ 
  as 
  in Proposition 
  \ref{finite enough}. 
  Apply Proposition \ref{finite enough} again, this time with $ E:=E(\mathcal{C}_F)\cup F, D $ and $ D^{*} $ to obtain $ 
  \mathcal{C}_E $. Then 
  $ (D \circlearrowleft \mathcal{C}_E \circlearrowleft \mathcal{C}_F)(F)=D^{**}(F)$.  
  \end{proof}
 
 Local reachability implies a formally stronger property, namely the countable version of itself.
 \begin{claim}\label{countable to everybody}
 If $ D^{*} $ is locally reachable from $ D $, then for all countable $ 
 E\subseteq \mathsf{Un}(D) $ there is a  $ \mathcal{C}_E\in \mathbb{RS}(D) $ of length at most $ \omega $ such that $ 
 \mathcal{C}_E $ 
 does not use any
 edge more than twice and $ {(D\circlearrowleft\mathcal{C}_E)(E)=D^{*}(E)} $. In particular,  among countable digraphs local 
 reachability implies 
 reachability.  
 \end{claim}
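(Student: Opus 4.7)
The claim splits into two cases. If $E$ is finite, local reachability gives some $\mathcal{C}\in\mathbb{RS}(D)$ with $(D\circlearrowleft\mathcal{C})(E)=D^{*}(E)$; Proposition~\ref{finite enough} extracts a finite subsequence with the same effect on $E$, and Proposition~\ref{fin many cycle} rewrites it as finitely many edge-disjoint cycles of $D$, touching each edge at most once, which comfortably satisfies the ``at most twice'' requirement.

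For countably infinite $E=\{e_n:n<\omega\}$ with $F_n:=\{e_0,\dots,e_{n-1}\}$, my plan is to build $\mathcal{C}_E$ stage by stage as a concatenation $\mathcal{E}_0\frown\mathcal{E}_1\frown\cdots$, where each $\mathcal{E}_n$ is a finite bundle of edge-disjoint directed cycles of $D_n:=D\circlearrowleft(\mathcal{E}_0\frown\cdots\frown\mathcal{E}_{n-1})$, maintaining the invariant that $D_n$ agrees with $D^{*}$ on $F_n$. At stage $n$ the partial sequence is finite, so $D$ is reachable from $D_n$ (reverse those cycles in opposite order, each in the appropriate intermediate orientation); Corollary~\ref{loc reach trans} then makes $D^{*}$ locally reachable from $D_n$. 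Proposition~\ref{finite enough} supplies a finite reversion sequence in $\mathbb{RS}(D_n)$ achieving $D^{*}$ on $F_{n+1}$, and Proposition~\ref{fin many cycle} rewrites its net reversal as finitely many edge-disjoint cycles of $D_n$; since $D_n$ already agrees with $D^{*}$ on $F_n$, these cycles are automatically disjoint from $F_n$, so $\mathcal{E}_n$ does not disturb the fixed edges.

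The main obstacle is enforcing the ``no edge used more than twice'' condition across stages. The construction above only guarantees that each $e_k\in E$ is touched in at most $k+1$ stages (finite but not uniformly bounded), while arbitrary edges outside $E$ may be touched in many stages as well. I would tighten the inductive hypothesis by recording, for each edge, how many times it has been reversed so far, and by requiring $\mathcal{E}_n$ to avoid cycles through edges already at the budget of two. The feasibility of such a refined choice would come from combining Corollary~\ref{small C}, used to isolate inside the supplied reversion sequence the part that genuinely affects $F_{n+1}$, with a parity/switching argument on the cycle decomposition to redirect cycles around the finite set of forbidden edges; verifying this refined step is where the bulk of the work lies. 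Once this is done, $\mathcal{C}_E$ is automatically a reversion sequence (each edge being touched finitely often), has order type $\omega$, and its limit agrees with $D^{*}$ on every $e_n$ by the invariant. Finally, the ``in particular'' clause follows by taking $E=\mathsf{Un}(D)$ when $D$ is countable, so that $\mathcal{C}_E$ witnesses reachability rather than merely local reachability.
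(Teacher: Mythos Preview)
Your framework is right and matches the paper up to the point you flag as ``the main obstacle'': the finite case, the use of Observation~\ref{fin invertable} (undoing a finite $\mathcal{C}_n$ to recover local reachability of $D^{*}$ from $D_n$), and the decomposition into edge-disjoint cycles via Propositions~\ref{finite enough} and~\ref{fin many cycle} are all exactly what the paper does. But you leave the crucial step unresolved, and the fix you sketch (rerouting cycles around edges at budget two via some parity/switching argument) is both vague and unnecessary.

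The missing idea is much simpler than what you propose. At stage $n$, do not target merely $F_{n+1}=\{e_0,\dots,e_n\}$; instead take
\[
F:=E(\mathcal{C}_n)\cup\{e_0,\dots,e_n\},
\]
the set of \emph{all} edges touched so far together with the next target edge, and choose the edge-disjoint cycles $\mathcal{E}$ so that $(D\circlearrowleft\mathcal{C}_n\circlearrowleft\mathcal{E})(F)=D^{*}(F)$. Simultaneously strengthen the invariant to: \emph{every edge touched twice by $\mathcal{C}_n$ already carries its $D^{*}$-orientation}. Now the ``at most twice'' bound maintains itself automatically. If $e$ was already touched twice by $\mathcal{C}_n$, then by the invariant $(D\circlearrowleft\mathcal{C}_n)(e)=D^{*}(e)$; since $e\in F$ and $(D\circlearrowleft\mathcal{C}_n\circlearrowleft\mathcal{E})(e)=D^{*}(e)$ as well, the edge-disjoint bundle $\mathcal{E}$ cannot touch $e$ at all. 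If $e$ was touched once, it lies in $F$, so $\mathcal{E}$ touches it at most once more and leaves it in the $D^{*}$-orientation, restoring the invariant. No rerouting, no Corollary~\ref{small C}, no switching argument is needed; the enlargement of $F$ does all the work.
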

 \begin{proof}
 If $ E $ is finite, then by Propositions  \ref{fin many cycle} and \ref{finite enough}, we can choose 
 finitely many 
 edge-disjoint cycles in $ D $ in such a way that if $ \mathcal{C}_E $ is an enumeration of them then  $ 
 (D\circlearrowleft\mathcal{C}_E)(E)=D^{*}(E) 
 $. 
 
 Assume that $ E=\{ e_n \}_{n<\omega} $ and let $ \mathcal{C}_0:=\varnothing $. Suppose that $ \mathcal{C}_k\in 
 \mathbb{RS}(D) $ is already defined  for all $ k\leq n $ for some $n<\omega$ in such a way that 
 \begin{enumerate}
 \item  $ \mathcal{C}_n $ is finite,
 \item $ \mathcal{C}_j  \subseteq \mathcal{C}_k $ if $ j\leq k\leq n $,
 \item $ \mathcal{C}_n $ touches each edge at most twice,
 \item  if edge $ e $ is 
 touched by $ \mathcal{C}_n $ twice or $ e=e_k $ for some $ 
 k<n $ then $ (D \circlearrowleft \mathcal{C}_n)(e)=D^{*}(e) $.
 
 \end{enumerate}
 \begin{obs}\label{fin invertable}
  For a finite $ \mathcal{C}\in \mathbb{RS}(D) $, there is a (not necessarily unique) $ \mathcal{C}^{-1}\in 
  \mathbb{RS}(D\circlearrowleft \mathcal{C}) $ such that $ D\circlearrowleft \mathcal{C}\circlearrowleft 
  \mathcal{C}^{-1}=D $. Indeed, we can revere back the cycles starting from the last one.
  \end{obs}
 
 By  property 1 and Observation \ref{fin invertable}, $ D $ is reachable from   $ D \circlearrowleft \mathcal{C}_n $  and 
 hence $ D^{*} $ is locally reachable from $ D \circlearrowleft \mathcal{C}_n $ by Corollary \ref{loc reach trans}.  Let $ F:= 
 E(\mathcal{C}_n)\cup \{ e_k 
 \}_{k\leq n} $. According to the first paragraph of 
 this proof, there is a finite $ \mathcal{E}\in 
 \mathbb{RS}(D\circlearrowleft 
 \mathcal{C}_n)  $ consisting of edge-disjoint cycles for which we have
 $ (D\circlearrowleft \mathcal{C}_n \circlearrowleft \mathcal{E})(F)=D^{*}(F) $. We claim that $ 
 \mathcal{C}_{n+1}:={\mathcal{C}_n }^\frown \mathcal{E} $ maintains the conditions. Indeed, if $\mathcal{C}_n$ touches 
 some edge $e$ twice then $ (D \circlearrowleft \mathcal{C}_n)(e)=D^{*}(e) $ by induction. Since we have
 $ (D\circlearrowleft \mathcal{C}_n \circlearrowleft \mathcal{E})(e)=D^{*}(e) $, none of the cycles in $\mathcal{E}$ touches 
 $e$. Finally, if $\mathcal{C}_{n+1}$ uses an edge $f$ twice or $f\in \{ e_k 
 \}_{k\leq n} $, then $f\in F$ and  the choice of
 $\mathcal{E}$ ensures $ (D \circlearrowleft \mathcal{C}_{n+1})(f)=D^{*}(f) $. The recursion is done and   $ 
 \mathcal{C}_E:=\bigcup_{n<\omega}\mathcal{C}_n $ is as desired. 
 \end{proof}
\begin{rem}\label{count thm}
At this point we are able to prove the restriction of Theorem \ref{main result} to countable digraphs. Indeed, for a countable 
digraph $ D $ 
 pick a reorientation $ D^{*} $ with $ \chi(D^{*})\leq 2 $ which is locally reachable from $ D $  (see Proposition \ref{lok reach 
 good orient}) and 
 ``reach 
 it'' by Claim \ref{countable to everybody}. 
\end{rem}  

By combining Claim  \ref{countable to everybody} with Theorem 
\ref{countable decomp}, we obtain the following: 

\begin{cor}\label{touch at most 2}
For every $ \mathcal{C}\in \mathbb{RS}(D) $, there is a $ \mathcal{E}\in \mathbb{RS}(D) $ such that:
\begin{itemize}
\item $ D\circlearrowleft 
\mathcal{C}=D\circlearrowleft 
\mathcal{E} $,
\item the length of $\mathcal{E}$ is  $ \left| \mathcal{C}\right| $,
\item  $E(\mathcal{E}) \subseteq E(\mathcal{C}) $,
\item $ \mathcal{E} $ touches each edge at most twice.

\end{itemize}
\end{cor}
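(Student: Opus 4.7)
The plan is to combine Theorem \ref{countable decomp} with Claim \ref{countable to everybody}, settling first the countable case and then bootstrapping via the $\omega$-block decomposition.

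For $|\mathcal{C}|\le\aleph_0$, set $E := E(\mathcal{C})$ (countable) and $D^{*} := D \circlearrowleft \mathcal{C}$. Claim \ref{countable to everybody}, applied to the countable edge set $E$, yields a length-$\le \omega$ sequence $\mathcal{E} \in \mathbb{RS}(D)$ touching each edge at most twice with $(D \circlearrowleft \mathcal{E})(E) = D^{*}(E)$. The only delicate point is the containment $E(\mathcal{E})\subseteq E(\mathcal{C})$, which is not in the statement but is visible in the proof: the finite edge-disjoint cycle blocks constructed at each recursive stage $n$ can be chosen inside $E$ provided we use ${\mathcal{C}_n}^{-1}{}^\frown \mathcal{C}$ (via Observation \ref{fin invertable} and Corollary \ref{loc reach trans}) as the local-reachability witness from $D \circlearrowleft \mathcal{C}_n$ to $D^{*}$, since Propositions \ref{finite enough} and \ref{fin many cycle} then extract a finite subsequence of this witness whose edges lie in $E(\mathcal{C}_n) \cup E(\mathcal{C}) \subseteq E$. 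With $E(\mathcal{E}) \subseteq E$ in hand, $D \circlearrowleft \mathcal{E} = D$ outside $E$, and since $D^{*} = D$ there as well, the local equality upgrades to $D \circlearrowleft \mathcal{E} = D^{*}$ globally.

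For $\kappa := |\mathcal{C}| > \aleph_0$, I would apply Theorem \ref{countable decomp} to obtain a rearrangement $\mathcal{E}'$ of $\mathcal{C}$ of length $\kappa$ split into countable blocks $\mathcal{E}'_\alpha \in \mathbb{RS}(D)$ ($\alpha < \kappa$) with pairwise disjoint edge sets $E(\mathcal{E}'_\alpha)$. Using the countable case on each $\mathcal{E}'_\alpha$ separately produces a countable $\mathcal{F}_\alpha \in \mathbb{RS}(D)$ with $D \circlearrowleft \mathcal{F}_\alpha = D \circlearrowleft \mathcal{E}'_\alpha$, $E(\mathcal{F}_\alpha) \subseteq E(\mathcal{E}'_\alpha)$, and each edge touched at most twice. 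I then concatenate: $\mathcal{E} := \mathcal{F}_0{}^\frown \mathcal{F}_1{}^\frown \cdots{}^\frown \mathcal{F}_\alpha{}^\frown \cdots$ ($\alpha < \kappa$). Pairwise disjointness of the $E(\mathcal{E}'_\alpha)$ guarantees that no earlier block interferes with the cycles of a later one, so $\mathcal{E} \in \mathbb{RS}(D)$; every edge is touched in at most one block, hence at most twice; the total length has cardinality $\kappa$; $E(\mathcal{E}) \subseteq \bigcup_\alpha E(\mathcal{E}'_\alpha) \subseteq E(\mathcal{C})$; and $D \circlearrowleft \mathcal{E}$ coincides with $D^{*}$ block-by-block on $E(\mathcal{C})$ and trivially outside $E(\mathcal{C})$.

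I expect the main obstacle to be precisely the edge-containment $E(\mathcal{E}) \subseteq E(\mathcal{C})$ in the countable case: this does not follow from Claim \ref{countable to everybody} as a black box, and requires opening its recursive construction to feed in $\mathcal{C}$ itself (rather than an arbitrary witness of local reachability) at each stage. Once this refinement is secured, the uncountable case is a straightforward gluing on top of Theorem \ref{countable decomp}.
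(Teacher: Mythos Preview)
Your proposal is correct and follows exactly the route the paper indicates (``combining Claim~\ref{countable to everybody} with Theorem~\ref{countable decomp}''): handle the countable case via Claim~\ref{countable to everybody}, then glue along the pairwise edge-disjoint $\omega$-blocks from Theorem~\ref{countable decomp}. Your diagnosis that $E(\mathcal{E})\subseteq E(\mathcal{C})$ is the only subtle point is right, and your fix (feeding ${\mathcal{C}_n}^{-1}{}^\frown\mathcal{C}$ into Propositions~\ref{finite enough} and~\ref{fin many cycle} at each recursive stage) works; a marginally cleaner alternative is to note that every cycle of $\mathcal{C}$ (and of each $\omega$-block $\mathcal{E}'_\alpha$) already lives in the countable subdigraph $D':=D(E(\mathcal{C}))$ (respectively $D(E(\mathcal{E}'_\alpha))$), so one may apply Claim~\ref{countable to everybody} to $D'$ as a black box and obtain $E(\mathcal{E})\subseteq\mathsf{Un}(D')=E(\mathcal{C})$ for free.
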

 
 \begin{rem}
 Claim  \ref{countable to everybody} is sharp in the sense that it may fail for uncountable $ E $. Indeed, consider the 
 digraph $D$ consisting of countably infinite $ u\rightarrow v $ edges and $ \aleph_1 $ many $ v \rightarrow u $ edges. The 
 reorientation in which each edge points towards $ v $ is locally reachable. Suppose for a contradiction that it is reachable as well 
 witnessed by $\mathcal{C}$. By Corollary \ref{touch at most 2}, we may assume that each edge is touched  at most twice by  
 $\mathcal{C}$. Then every $ v \rightarrow u $ edge is in exactly one cycle $C$ of $\mathcal{C}$. Moreover, no cycle in  
 $\mathcal{C}$ consists of two such edges because then one of them must be touched by another cycle as well. But then by the 
 Pigeonhole principle there must be an $ u\rightarrow v $ edge which is contained in uncountably many cycles of  $\mathcal{C}$ 
 which is a contradiction. A shorter argument can be given using Corollary \ref{loc con not incr}.
 \end{rem}

\begin{rem}
One cannot  replace  ``twice'' by ``once'' at the last point of Corollary \ref{touch at most 2}. On the one hand, it is easy to see that 
reversing pairwise 
edge-disjoint cycles cannot change the 
local edge-connectivities. On the other hand, a  reversion sequence can (see for example in Proposition \ref{EMeng flip sep}).
\end{rem}
\section{Local edge-connectivities and cycle reversions}
To prove Theorem \ref{main result} for arbitrary digraphs, knowing the theorem for countable ones (see Remark \ref{count 
thm}), it seems a natural idea to  partition $ D $ into  digraphs of smaller size by a chain of elementary submodels 
and solve the problem for 
these smaller pieces separately. (For readers not yet familiar with elementary submodels we suggest to look at our short 
overview in the Appendix \ref{appendix}.) 
This naive approach does not work in general  since we may have cycles not living in just a 
single member of the partition. One 
can 
make this approach work under the (very strong)  assumption that in every strong component of $ D $ every local 
edge-connectivity  is 
countable. Indeed,  in 
this case if $ M $ is a countable elementary submodel containing $ D $, then every cycle $ C $ of $ D $ that has an edge in 
$D\cap M $ is entirely 
in $ 
D\cap M $ (otherwise $ C\setminus M $ contains a $ u\rightarrow v$ path for some distinct vertices $ u,v\in M $  which ensures 
$ 
\lambda(u,v;D)>\left|M\right| $ by Fact \ref{big loc connect}). Furthermore, each  strong component of $ D\setminus M $ 
would contain at most one 
vertex 
from $ V(D)\cap M $ because of Fact \ref{big loc connect}. We would then be able to solve the problem for $ D\cap M $ and   
proceed 
with $ D\setminus M $ separately. Hence we shift our focus to 
investigate 
the 
possibility  of destroying uncountable 
edge-connectivities inside strong components by a reversion sequence.

Let us start with a famous result that we need later.
An \textbf{Erdős-Menger} $ \boldsymbol{u\rightarrow v }$ \textbf{path-system}  in a digraph $ D $ with distinct $ u, v \in V(D) 
$ is a set
$ \mathcal{P} $ of pairwise edge-disjoint $ u \rightarrow v $ paths such that one can choose exactly one edge from each $ P\in 
\mathcal{P} $ in 
such a way that the resulting edge set $ T $ is a $ u\rightarrow v $ cut in $ D $ (a $ u\rightarrow v $ cut is an edge set  such that 
every $ u\rightarrow v $ path of $ D $ must use an edge of it). 
We call such a $ T $ an \textbf{Erdős-Menger cut orthogonal to} $\boldsymbol{ \mathcal{P}} $. Note that $ 
\left|\mathcal{P}\right|= 
\lambda(u,v;D)$.
\begin{thm}[Infinite Menger's Theorem, \cite{aharoni2009menger}]\label{InfMeng}
For every digraph $ D $ and distinct $ u, v \in V(D) $, there is an Erdős-Menger $ u\rightarrow v $ path-system.
\end{thm}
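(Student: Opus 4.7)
The plan is to reduce the edge-version of Menger's theorem for digraphs to the vertex-version of the Erd\H os-Menger conjecture, which is the main result of Aharoni and Berger \cite{aharoni2009menger}. To do this, I would pass to the line digraph $L(D)$ (vertices are the edges of $D$; $(e,f)$ is an edge whenever the head of $e$ equals the tail of $f$), augmented with two new vertices $u'$ and $v'$: add an arc from $u'$ to each vertex of $L(D)$ corresponding to an edge of $D$ leaving $u$, and an arc to $v'$ from each vertex of $L(D)$ corresponding to an edge of $D$ entering $v$. Under this correspondence, edge-disjoint $u\rightarrow v$ paths in $D$ biject with internally vertex-disjoint $u'\rightarrow v'$ paths in the augmented digraph.

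Applying the vertex-version of the Aharoni-Berger theorem to the pair $(u', v')$ produces a family $\mathcal{Q}$ of internally vertex-disjoint $u'\rightarrow v'$ paths together with a $u'\rightarrow v'$ vertex separator $S$, chosen so that $S$ consists of exactly one internal vertex from each member of $\mathcal{Q}$. Translating back, $\mathcal{Q}$ becomes a set $\mathcal{P}$ of pairwise edge-disjoint $u\rightarrow v$ paths in $D$, the separator $S$ becomes a $u\rightarrow v$ edge cut $T$ in $D$, and the one-vertex-per-path condition becomes the one-edge-per-path condition; this is precisely an Erd\H os-Menger path-system.

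If one wished to prove the theorem from scratch instead of invoking the vertex version, the standard route is the \emph{wave} strategy: define an edge-wave toward $v$ as a family of pairwise edge-disjoint paths starting at $u$ together with a partial orthogonal cut, extract a maximal wave via Zorn's lemma (concatenating chains and passing to limits along a well-ordering), and then handle the residual ``unhindered'' digraph by a transfinite back-and-forth argument. The main obstacle is this unhindered case: showing that a digraph without non-trivial waves admits a complete family of edge-disjoint $u\rightarrow v$ paths together with an orthogonal cut requires the full Aharoni-Berger machinery of hindrances, linkage extensions, and careful limit bookkeeping. Since that argument is considerably longer than the present paper, the honest plan is to invoke the theorem as a black box, which is what the excerpt does.
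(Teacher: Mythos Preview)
The paper does not prove this theorem: it is quoted as a black box from \cite{aharoni2009menger}, with the remark that Aharoni and Berger originally established the vertex version and that the edge version is ``known to be equivalent''. Your proposal is precisely the standard line-digraph reduction that establishes this equivalence, so you are supplying more detail than the paper itself does, and your overall plan is correct.

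One small point worth tightening: the correspondence you describe is not literally a bijection between edge-disjoint $u\rightarrow v$ \emph{paths} in $D$ and internally vertex-disjoint $u'\rightarrow v'$ paths in the augmented line digraph. A $u'\rightarrow v'$ path in the line digraph corresponds to a $u\rightarrow v$ \emph{trail} in $D$ (distinct edges, but possibly repeated vertices). This does not harm the argument: from each trail extract a $u\rightarrow v$ path; the resulting paths remain pairwise edge-disjoint, each must meet the cut $T$ (since $T$ is a $u\rightarrow v$ edge-cut), and since each trail contains exactly one edge of $T$, so does the path extracted from it. Distinct trails contain distinct elements of $T$, so $T$ is orthogonal to the extracted family, as required.
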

\begin{rem}
Originally, Aharoni and Berger proved the vertex-version of Theorem \ref{InfMeng} but it is known to be equivalent with the 
edge-version above.
\end{rem}

\begin{claim}
For every digraph $ D $, $ \mathcal{C}\in \mathbb{RS}(D) $ and $ W\subseteq V(D) $, 
$ \left|\mathsf{out}_D(W)\right|\geq \left|\mathsf{out}_{D\circlearrowleft \mathcal{C}}(W)\right| $.
\end{claim}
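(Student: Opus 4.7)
The plan is to proceed by transfinite induction on the length $\alpha$ of $\mathcal{C}$, relying on the key balance observation that any directed cycle $C$ crosses any vertex set $W$ evenly in each direction: $|C \cap \mathsf{out}(W)| = |C \cap \mathsf{in}(W)|$, since $C$ is a closed walk. Reversing $C$ therefore just swaps these two finite sets between $\mathsf{out}(W)$ and $\mathsf{in}(W)$, preserving $|\mathsf{out}(W)|$ exactly. The base case is trivial, and the successor step follows immediately from this balance combined with the inductive hypothesis.

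For the limit step, I would set $D_\xi := D \circlearrowleft (\mathcal{C} \upharpoonright \xi)$ and $\mu := |\mathsf{out}_D(W)|$, so that the inductive hypothesis gives $|\mathsf{out}_{D_\xi}(W)| \leq \mu$ for every $\xi < \alpha$. Local finiteness of $\mathcal{C}$ supplies each edge $e$ with a stabilization time $\xi_e < \alpha$ such that $D_\xi(e) = D^*(e)$ for all $\xi \geq \xi_e$. Fixing a cofinal sequence $\langle \eta_\gamma : \gamma < \mathrm{cf}(\alpha)\rangle$ in $\alpha$, each $e \in \mathsf{out}_{D^*}(W)$ lies in $\mathsf{out}_{D_{\eta_\gamma}}(W)$ as soon as $\eta_\gamma \geq \xi_e$, so
\[ \mathsf{out}_{D^*}(W) \subseteq \bigcup_{\gamma < \mathrm{cf}(\alpha)} \mathsf{out}_{D_{\eta_\gamma}}(W), \]
whence $|\mathsf{out}_{D^*}(W)| \leq \mathrm{cf}(\alpha) \cdot \mu$. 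When $\mu$ is finite, applying this covering to an arbitrary finite subset $F \subseteq \mathsf{out}_{D^*}(W)$ (whose $\max_{e \in F}\xi_e$ is strictly below $\alpha$) places $F$ inside a single $\mathsf{out}_{D_{\xi^*}}(W)$, forcing $|F| \leq \mu$ and hence $|\mathsf{out}_{D^*}(W)| \leq \mu$. When $\mu$ is infinite with $\mathrm{cf}(\alpha) \leq \mu$, the cofinality bound itself collapses to $\mu$.

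The hard part is the remaining case, $\mu$ infinite with $\mathrm{cf}(\alpha) > \mu$. I would handle it by restructuring the outer induction as an induction on $|\mathcal{C}|$ as a cardinal and applying Corollary~\ref{small C} with $E := \partial W := \mathsf{out}_D(W) \cup \mathsf{in}_D(W)$ to rearrange $\mathcal{C}$ as ${\mathcal{E}_{\partial W}}^\frown \mathcal{E}_{\neg \partial W}$, where $\mathcal{E}_{\neg \partial W}$ avoids $\partial W$ entirely and so leaves $\mathsf{out}(W)$ untouched, reducing the relevant reversion sequence to length at most $|\partial W| + \aleph_0$. Combined with Corollary~\ref{touch at most 2} to further assume each edge is touched at most twice, I would map each edge of $\mathsf{out}_{D^*}(W) \setminus \mathsf{out}_D(W)$ at most $2$-to-$1$ to an out-to-in flipped edge of its last reversing cycle, each of which lies in some $\mathsf{out}_{D_\xi}(W)$ of size at most $\mu$ by the inductive hypothesis. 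Matching this transfinite counting to the outer cardinality induction at the boundary values $|\mathcal{C}| = \mu^+$ is the most delicate step, which I would address by iterating these reductions together with the $\omega$-block decomposition of Theorem~\ref{countable decomp}.
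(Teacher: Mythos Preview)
Your induction handles the base case, the successor step, the finite-$\mu$ limit, and the infinite-$\mu$ limit with $\mathrm{cf}(\alpha)\le\mu$ correctly. The gap is in the remaining case $\mu$ infinite with $\mathrm{cf}(\alpha)>\mu$, and you essentially concede as much in your final sentence.

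The reduction via Corollary~\ref{small C} with $E=\partial W$ does not work as stated: you only control $|\mathsf{out}_D(W)|=\mu$, while $|\mathsf{in}_D(W)|$ can be arbitrarily large, so $|\partial W|+\aleph_0$ need not be smaller than $|\mathcal{C}|$ and no genuine descent occurs. The subsequent $2$-to-$1$ mapping via Corollary~\ref{touch at most 2} is too vague to evaluate; mapping each new out-edge to some out-to-in flip of ``its last reversing cycle'' lands you in various $\mathsf{out}_{D_\xi}(W)$ for different $\xi$, and summing these bounds just reproduces the $\mathrm{cf}(\alpha)\cdot\mu$ estimate you already had. Switching induction schemes mid-proof from ordinal length to cardinality of $\mathcal{C}$ does not create a well-founded descent here either.

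The irony is that your ``hard case'' is exactly where a direct closing-off argument finishes immediately. Since $\mathrm{cf}(\alpha)>\mu\ge\aleph_0$, iterate: set $\beta_0=0$ and let $\beta_{n+1}<\alpha$ exceed the last touching time of every edge in $\mathsf{out}_{D_{\beta_n}}(W)$ (possible because that set has size $\le\mu<\mathrm{cf}(\alpha)$ by your inductive hypothesis). Then $\beta^*:=\sup_n\beta_n<\alpha$, and no edge of $\mathsf{out}_{D_{\beta^*}}(W)$ is touched after $\beta^*$; by the balance observation no cycle after $\beta^*$ can touch any boundary edge at all, so $\mathsf{out}_{D^*}(W)=\mathsf{out}_{D_{\beta^*}}(W)$ and you are done by the inductive hypothesis. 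This is precisely the mechanism in the paper's proof, which is organised as a contradiction argument: assume $|\mathsf{out}_{D^*}(W)|=\kappa^+>|\mathsf{out}_D(W)|=\kappa$, use Corollary~\ref{small C} with $E=\mathsf{Un}(\mathsf{out}_{D^*}(W))$ (not $\partial W$) to reduce to length $\kappa^+$, bound $|\mathsf{out}_{D_\alpha}(W)|\le\kappa$ directly from the number of reoriented edges, and then close off exactly as above.
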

\begin{proof}
Suppose to the contrary that  $ \left|\mathsf{out}_D(W)\right|< \left|\mathsf{out}_{D\circlearrowleft \mathcal{C}}(W)\right| 
$. If $  \left|\mathsf{out}_D(W)\right|<\aleph_0 $, then by Proposition \ref{finite enough}, a finite subsequence of $ 
\mathcal{C} $ would already increases the number of outgoing edges of $ W $ which is clearly impossible. Let $ 
\left|\mathsf{out}_D(W)\right| =:\kappa \geq \aleph_0 $. By taking a suitable initial segment of $ \mathcal{C} $, we may 
assume 
 $ \left|\mathsf{out}_{D\circlearrowleft \mathcal{C}}(W)\right|= \kappa^{+}$.  Without loss of generality we suppose
that $ \mathcal{C} $ has length $ \kappa^{+} $ (use Corollary \ref{small C} with $ 
E:=\mathsf{Un}(\mathsf{out}_{D\circlearrowleft \mathcal{C}}(W)) $). Note that for any $\alpha<\kappa^{+} $ for $ 
\mathcal{C}_\alpha:=\mathcal{C}\! \upharpoonright \alpha $, we have $ \left|\mathsf{out}_{D\circlearrowleft 
\mathcal{C}_\alpha}(W)\right|\leq\kappa $ since $ \mathcal{C}_{\alpha} $ reorients at most $ \kappa $ many edges. Let 
$ \beta_0=0 $. If $ 
\beta_n<\kappa^{+}$ is defined  then pick for each 
$ \overrightarrow{e}\in  \mathsf{out}_{D\circlearrowleft \mathcal{C}_{\beta_n}}(W) $ some $ \beta_e>\beta_n $ such that $ 
e\notin 
E(C_\beta) $ for $ \beta\geq \beta_e $. 
Let  $ \beta_{n+1}:=\sup\{\beta_e : e\in E(\mathsf{out}_{D\circlearrowleft \mathcal{C}_{\beta_n}}(W))  \} $ and finally $ 
\alpha:=\sup_{n<\omega}\beta_n<\kappa^{+} $. By construction,  none of the edges $ 
\mathsf{out}_{D\circlearrowleft 
\mathcal{C}_\alpha}(W) $ are 
touched by any $ C_\beta $ with $ \beta\geq \alpha$ and therefore none of the edges $ \mathsf{in}_{D\circlearrowleft 
\mathcal{C}_\alpha}(W) $ as well. Thus $ \mathsf{out}_{D\circlearrowleft 
\mathcal{C}}(W)=\mathsf{out}_{D\circlearrowleft \mathcal{C}_\alpha}(W) $ which is a contradiction.

\end{proof}
 \begin{cor}\label{loc con not incr}
 For every digraph $ D, \text{ distinct }u, v\in V(D) $  and $ \mathcal{C}\in \mathbb{RS}(D) $, 
 $ \lambda(u,v; D\circlearrowleft \mathcal{C})\leq\lambda(u,v; D)  $.
 \end{cor}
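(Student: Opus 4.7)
The plan is to combine the preceding Claim on non-increase of outgoing edge counts with the Infinite Menger Theorem, by exhibiting a single vertex set $W$ whose outgoing edge set is simultaneously a minimum $u\to v$ cut of $D$ (witnessing $\lambda(u,v;D)$) and some $u\to v$ cut of $D\circlearrowleft\mathcal{C}$ (bounding $\lambda(u,v;D\circlearrowleft\mathcal{C})$).

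First, I would apply Theorem \ref{InfMeng} to $D$ to obtain an Erdős-Menger $u\to v$ path-system $\mathcal{P}$ together with an orthogonal cut $T$, so that $\lambda(u,v;D)=|\mathcal{P}|=|T|$. Let $W$ be the set of vertices reachable from $u$ in $D\setminus T$; clearly $u\in W$ and $v\notin W$, and any $W\to V(D)\setminus W$ edge of $D$ must lie in $T$ (else $W$ could be extended), so $\mathsf{out}_D(W)\subseteq T$. Conversely, each of the $|\mathcal{P}|$ pairwise edge-disjoint paths crosses from $W$ to its complement and hence uses an edge of $\mathsf{out}_D(W)$, giving $|\mathcal{P}|\leq |\mathsf{out}_D(W)|$. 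Combining these, $|\mathsf{out}_D(W)|=|T|=\lambda(u,v;D)$.

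Now I invoke the preceding Claim with this $W$ and the given $\mathcal{C}\in\mathbb{RS}(D)$ to conclude
\[
  |\mathsf{out}_{D\circlearrowleft\mathcal{C}}(W)|\leq |\mathsf{out}_D(W)|=\lambda(u,v;D).
\]
Since $u\in W$, $v\notin W$, the set $\mathsf{out}_{D\circlearrowleft\mathcal{C}}(W)$ is a $u\to v$ cut of $D\circlearrowleft\mathcal{C}$, and any family of pairwise edge-disjoint $u\to v$ paths injects into this cut; therefore $\lambda(u,v;D\circlearrowleft\mathcal{C})\leq |\mathsf{out}_{D\circlearrowleft\mathcal{C}}(W)|$. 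Chaining the two inequalities yields the desired conclusion.

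There is no real obstacle here: the Claim does all the nontrivial work, and Menger's theorem is used only to convert a maximum path-system to a minimum-cut representation. The only subtle point is the (trivial) easy direction of max-flow/min-cut (that any edge-disjoint path system has cardinality at most any cut), which is not stated explicitly in the excerpt but follows instantly from the definition of a cut.
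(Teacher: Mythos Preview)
Your proof is correct and follows essentially the same approach as the paper: both combine the preceding Claim with Theorem~\ref{InfMeng} to pass from path-systems to cuts of the form $\mathsf{out}_D(W)$. The only cosmetic difference is that the paper writes $\lambda$ as a minimum over all admissible $W$ on both sides and compares the minima termwise, whereas you pick one specific minimizing $W$ for $D$ and use only the trivial (max $\leq$ min) direction for $D\circlearrowleft\mathcal{C}$; your version is in fact slightly more economical in its use of Menger.
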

 \begin{proof}
 The local edge-connectivity from $ u $ to $ v $ can be expressed by Theorem \ref{InfMeng} as the smallest size of the $ 
 u\rightarrow v $ cuts. Thus by applying the previous claim we 
 obtain the following:
\begin{align*}
 \lambda(u,v; D) =&\min\{\left|\mathsf{out}_D(W) \right|: \{ u \}\subseteq W\subseteq V(D)-v \}\geq\\ 
 &\min\{\left|  \mathsf{out}_{D\circlearrowleft 
\mathcal{C}}(W)\right| :  \{ u \}\subseteq W\subseteq V(D)-v \}=\lambda(u,v; D\circlearrowleft \mathcal{C}). 
\end{align*}
 \end{proof}
\begin{prop}\label{path reset}
Let $ D $ be a digraph and let $ u, v \in V(D) $ with $u\neq v$ such that $ \lambda(v,u;D)\geq \aleph_0  $. Then for each $ u 
\rightarrow v $ path $ P $, there is a $ \mathcal{C}\in \mathbb{RS}(D) $ of length  $ \omega $ such that 
$ D\setminus (D\circlearrowleft \mathcal{C}) =P$ (i.e., $\mathcal{C}$ reverses exactly $ P $). 
In addition,  $ \mathcal{C} $ can be chosen in such a way that it avoids a 
prescribed  edge set of size less than $ \lambda(v,u;D) $ which is disjoint from $ P $.
\end{prop}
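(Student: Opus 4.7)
The plan is to construct $\mathcal{C}$ as a ``relay'' of $\omega$ many finite blocks, each reversing a union of edge-disjoint directed cycles. The first block uses $P$ together with some $v\to u$ path $Q_0$ (edge-disjoint from $P$ and from the forbidden set $F$) to flip $P$, but this inadvertently flips $Q_0$ as well. Each subsequent block pairs the currently ``wrong'' path $Q_{n-1}$ with a fresh $v\to u$ path $Q_n$, flipping $Q_{n-1}$ back into place at the cost of misorienting $Q_n$. In the limit, each edge of $P$ is touched exactly once, each edge of $Q_n$ exactly twice, and all other edges are untouched, so $D\circlearrowleft \mathcal{C}$ will differ from $D$ precisely on $P$.

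To secure the auxiliary paths, observe that $P$ is finite and $|F|<\lambda(v,u;D)$, hence $|F\cup P|<\lambda(v,u;D)$. Fixing any system of $\lambda(v,u;D)$ many pairwise edge-disjoint $v\to u$ paths in $D$, at most $|F\cup P|$ of them can meet $F\cup P$, so at least $\aleph_0$ survive; let $Q_0,Q_1,Q_2,\dots$ be $\omega$ many such surviving paths. Set $D_0:=D$, $R_0:=P\cup Q_0$, and $R_n:=Q_{n-1}\cup Q_n$ for $n\geq 1$. By induction, within $D_n$ the two paths making up $R_n$ are edge-disjoint and run in opposite directions between $u$ and $v$: at step $n\geq 1$, $Q_{n-1}$ has just been flipped to orient $u\to v$, while $Q_n$ is still in its original $v\to u$ orientation. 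A direct in/out-degree count at each vertex (the only nontrivial case being a vertex lying on both paths) shows that $R_n$ is Eulerian in $D_n$, hence decomposes into finitely many pairwise edge-disjoint directed cycles. Reversing these cycles in any order is a finite reversion block turning $D_n$ into $D_{n+1}$, since each cycle remains a cycle of the intermediate digraph by edge-disjointness.

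The concatenation of these blocks is a reversion sequence $\mathcal{C}\in \mathbb{RS}(D)$ of length $\omega$. Each edge of $P$ is touched exactly once (inside block $0$), each edge of $Q_n$ is touched exactly twice (inside blocks $n$ and $n+1$), and all other edges of $D$ are never touched; in particular every edge stabilizes, so $D\circlearrowleft \mathcal{C}$ is well defined, $D\setminus (D\circlearrowleft \mathcal{C})=P$, and $E(\mathcal{C})\cap F=\varnothing$ by construction. I foresee no genuine obstacle in carrying this out; the only point requiring care is the Eulerian verification at each step, which as noted is routine, and the observation that the sequence of finite blocks has total ordinal length $\omega$ (each block contributes at least one cycle, and finitely many of them, so the $\omega$-fold sum is $\omega$).
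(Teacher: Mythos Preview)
Your proof is correct and follows essentially the same approach as the paper's own argument: both use a relay scheme where $P\cup Q_0$ is first decomposed into cycles and reversed, and then each subsequent block pairs the currently misoriented $Q_{n-1}$ with a fresh $Q_n$ to undo the damage while pushing the defect one step further. Your write-up is in fact more carefully indexed than the paper's (which has a small slip writing $Q_1,Q_2$ where $Q_0,Q_1$ is meant), and you spell out explicitly why enough disjoint $Q_n$ exist and why each block is a legitimate finite reversion sequence.
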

\begin{proof}
Let $ \{ Q_n \}_{n<\omega} $ be a set of pairwise edge-disjoint $ v \rightarrow u $ paths avoiding both $ P $ and a prescribed 
edge set of size  less 
than $ \lambda(v,u;D) $. 
Then $ P\cup Q_0 $ is the union of finitely many edge-disjoint cycles (see Figure \ref{fig pathflip}), reverse these cycles in an 
arbitrary order. 
In the 
resulting orientation the reverse of  $ Q_1 $ together with $ Q_2 $ is the union of finite many cycles thus we can 
reverse back $ Q_1 $ for the prize of reversing $ Q_2 $. By continuing this recursively we construct the desired 
$ \mathcal{C} $ which reverses exactly $ P $.
\begin{figure}[H]
\centering
\begin{tikzpicture}[scale=1.3]

\node[draw,circle] (v1) at (-2.5,0.5) {};
\node[draw,circle]  (v2) at (-1,0.5) {};
\node[draw,circle]  (v3) at (0.5,0.5) {};
\node[draw,circle]  (v4) at (2,0.5) {};
\draw[draw,circle]   (v1) edge[thick, ->] (v2);
\draw[draw,circle]   (v2) edge[thick, ->] (v3);
\draw[draw,circle]   (v3) edge[thick, ->] (v4);

\node at (2,0) {$v$};
\node at (-2.5,0) {$u$};
\node at (-0.5,0) {$P$};

\node[draw,circle]  (v5) at (1.5,1.5) {};
\node[draw,circle]  (v6) at (0,1.5) {};
\node[draw,circle]  (v7) at (-1.5,2) {};

\draw   (v4) edge[->] (v5);
\draw  (v5) edge[->]  (v3);
\draw   (v3) edge[->]  (v6);
\draw  (v6) edge[->]  (v7);
\draw  (v7) edge[->]  (v1);

\node[draw,circle]   (v8) at (1.5,2.5) {};
\node[draw,circle]   (v9) at (-0.5,2.5) {};
\node[draw,circle]   (v10) at (-2,2.5) {};
\draw  (v4) edge[dashed,->]  (v8);
\draw   (v8) edge[dashed,->]  (v9);
\draw  (v9) edge[dashed,->]  (v10);
\draw  (v10) edge[dashed,->]  (v1);

\node at (-0.6,1.2) {$Q_0$};
\node at (0,2.2) {$Q_1$};
\node at (0.2,3) {$\vdots$};
\end{tikzpicture}
\caption{Reversing path $ P $ without reversing anything else.}\label{fig pathflip}
\end{figure}
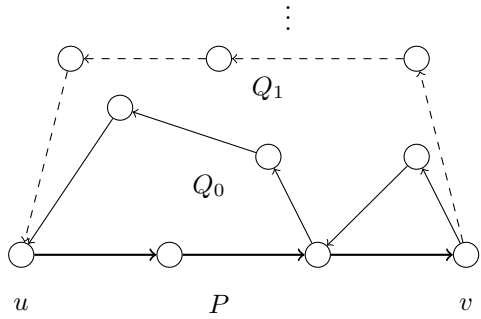 
\end{proof}
\begin{cor}\label{flip path-sys}
Let $ D $ be a digraph and $ u, v \in V(D) $ with $u\neq v$ such that $ \lambda(v,u;D)\geq \aleph_0  $.  Let $ \mathcal{P} $ be a 
set of 
edge-disjoint $ u\rightarrow v $ paths of size at most $ \lambda(v,u;D) $. Then there is a $ \mathcal{C}\in \mathbb{RS}(D) $ of 
length 
at most $ \lambda(v,u;D)  $ such that 
$ D\setminus (D\circlearrowleft \mathcal{C}) =\bigcup \mathcal{P}$. In addition,  $ \mathcal{C} 
$ can be chosen in such a way 
that it avoids a 
prescribed  edge set of size less than $ \lambda(v,u;D) $ which is disjoint from $ \bigcup\mathcal{P} $.
\end{cor}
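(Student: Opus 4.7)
The plan is to iterate the construction of Proposition \ref{path reset} across $\mathcal{P}$, reversing one path at a time, while using a \emph{pre-allocated} pool of edge-disjoint helper $v\to u$ paths to avoid interference. Set $\kappa:=\lambda(v,u;D)$ and enumerate $\mathcal{P}=\{P_\alpha : \alpha<\mu\}$ with $\mu\leq\kappa$. By Theorem \ref{InfMeng}, pick $\kappa$ many pairwise edge-disjoint $v\to u$ paths in $D$; discard the fewer than $\kappa$ of them that meet the prescribed set $E_0$, and note that $v\to u$ paths are automatically edge-disjoint from the $u\to v$ edges of $\bigcup\mathcal{P}$. Partition the surviving $\kappa$ paths into $\mu$ countably infinite blocks $\{Q^{(\alpha)}_i : i<\omega\}_{\alpha<\mu}$.

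For each $\alpha<\mu$, let $\mathcal{C}_\alpha\in\mathbb{RS}(D)$ be the length-$\omega$ reversion sequence produced by the proof of Proposition \ref{path reset} with target $P_\alpha$ and helpers $Q^{(\alpha)}_i$: first reverse the cycles in the Eulerian decomposition of $P_\alpha\cup Q^{(\alpha)}_0$; then, using that $Q^{(\alpha)}_0$ is now oppositely oriented, reverse the cycles of the decomposition of its reverse together with $Q^{(\alpha)}_1$; continue analogously for $i=2,3,\dots$. Because the helper blocks are pairwise edge-disjoint and avoid $\bigcup\mathcal{P}$, at the moment $\mathcal{C}_\alpha$ is applied the edges of $Q^{(\alpha)}_i$ still carry their original $v\to u$ orientation (no earlier $\mathcal{C}_\beta$ touched them) and $P_\alpha$ is still $u\to v$, so every step of $\mathcal{C}_\alpha$ genuinely decomposes into directed cycles of the current digraph.

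Form the concatenation $\mathcal{C}:=\mathcal{C}_0{}^\frown\mathcal{C}_1{}^\frown\cdots$. Edges of $P_\alpha$ are touched exactly once and edges of $Q^{(\alpha)}_i$ exactly twice (all inside $\mathcal{C}_\alpha$), while all other edges are left untouched. In particular orientations stabilize at every limit ordinal, so $\mathcal{C}\in\mathbb{RS}(D)$ is well-defined. Its length is $\omega\cdot\mu$, of cardinality at most $\kappa$; its effect is to reverse exactly $\bigcup\mathcal{P}$; and by construction no edge of $E_0$ is touched.

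The main obstacle is the countable case $\kappa=\aleph_0$: a naive dynamic allocation in which at stage $\alpha$ one invokes Proposition \ref{path reset} with avoidance set $E_0\cup\bigcup_{\beta<\alpha}E(\mathcal{C}_\beta)$ quickly produces an infinite forbidden edge set, violating the $<\lambda(v,u;\cdot)$ size hypothesis of that proposition. Committing in advance to edge-disjoint helper blocks, as above, is what makes the argument go through uniformly in $\kappa$.
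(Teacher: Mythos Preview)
Your pre-allocation strategy is a reasonable way to organise the induction, and your remark about the countable case is apt. However, there is a genuine gap: the assertion that ``$v\to u$ paths are automatically edge-disjoint from the $u\to v$ edges of $\bigcup\mathcal{P}$'' is false. A directed edge $a\to b$ can sit on a $u\to v$ path (e.g.\ $u\to a\to b\to v$) and simultaneously on a $v\to u$ path (e.g.\ $v\to a\to b\to u$); the orientation of the single edge imposes no constraint on the direction of a longer path through it.

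This matters at two points of your argument. First, the Eulerian decomposition of $P_\alpha\cup Q^{(\alpha)}_0$ into edge-disjoint cycles requires $P_\alpha$ and $Q^{(\alpha)}_0$ to be edge-disjoint. Second, and more seriously, at the start of step $\alpha$ the current digraph $D_\alpha$ has every $P_\beta$ with $\beta<\alpha$ already reversed; if some $Q^{(\alpha)}_i$ shares an edge with such a $P_\beta$, that edge no longer carries its original orientation and $Q^{(\alpha)}_i$ is not a $v\to u$ path in $D_\alpha$, so the construction of $\mathcal{C}_\alpha$ breaks down. With your $\omega$-sized blocks you cannot simply discard the offending helpers: the set $\bigcup_{\beta\le\alpha}P_\beta$ has size up to $|\alpha|\cdot\aleph_0$, which for infinite $\alpha$ may force you to discard the entire block.

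The fix is to ensure at step $\alpha$ that the helpers avoid $\bigcup_{\beta\le\alpha}P_\beta$. One clean way is to take blocks of size $\kappa$ when $\kappa>\aleph_0$, so that after discarding at most $|\alpha|\cdot\aleph_0<\kappa$ helpers meeting $\bigcup_{\beta\le\alpha}P_\beta$ there are still $\omega$ left; when $\kappa=\aleph_0$ your $\omega$-sized blocks do suffice, since at each finite step $n$ the set $\bigcup_{\beta\le n}P_\beta$ is finite and only finitely many helpers of the block meet it. The paper's own proof takes the dynamic route instead---applying Proposition~\ref{path reset} at each step in the current digraph while avoiding the prescribed set together with all edges touched so far---which is terser but relies on the same underlying bookkeeping.
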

\begin{proof}
Take a $ \left|\mathcal{P}\right| $-type enumeration of  $ \mathcal{P} $. We use transfinite recursion, in which we apply 
Proposition \ref{path reset} in each step with the next element of $ \mathcal{P} $ avoiding  the prescribed edge set and the edges 
we touched so far.
\end{proof}

\begin{prop}\label{EMeng flip sep}
If $ \mathcal{P} $  is an Erdős-Menger $ u\rightarrow v $ path-system  in a digraph $ D $ and $ D^{*} $ is the digraph that we 
obtain from $ D $ by 
reversing  the  edges from $ 
\bigcup \mathcal{P} $,  then $ D^{*} $ has no $ u\rightarrow v $ path.
\end{prop}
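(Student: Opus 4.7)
\medskip

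\noindent\textbf{Proof plan.} The plan is to produce a vertex set $W$ with $u\in W$, $v\notin W$, such that after the reversion there are no outgoing edges from $W$; this immediately blocks every $u\to v$ path in $D^{*}$.

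First, I fix an Erdős--Menger cut $T$ orthogonal to $\mathcal{P}$, writing $T=\{t_P:P\in\mathcal{P}\}$ with $t_P\in P$ for each $P$. Define $W$ to be the set of vertices reachable from $u$ in the digraph $D\setminus T$ (including $u$ itself). Since $T$ is a $u\to v$ cut, $v\notin W$. Moreover, by the very definition of $W$, every edge of $\mathsf{out}_D(W)$ must lie in $T$, i.e.\ $\mathsf{out}_D(W)\subseteq T$.

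The key structural observation is this: each path $P\in\mathcal{P}$, viewed as a sequence of edges, uses exactly one edge of $T$ (by orthogonality), and since $P$ goes from $u\in W$ to $v\notin W$ the algebraic sum (crossings $W\to \bar W$ minus crossings $\bar W\to W$) along $P$ equals $1$. Every $W\to \bar W$ crossing on $P$ lies in $\mathsf{out}_D(W)\subseteq T$, but $P$ meets $T$ only in $t_P$. So $P$ has exactly one $W\to\bar W$ crossing (namely $t_P$) and \emph{zero} $\bar W\to W$ crossings; in particular every non-$T$ edge of $P$ stays inside $W$ or inside $\bar W$.

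Now examine $\mathsf{out}_{D^{*}}(W)$. An edge of $\mathsf{out}_{D^{*}}(W)$ is either (a) an edge of $\mathsf{out}_D(W)$ that was not reversed, or (b) an edge of $\mathsf{in}_D(W)$ that was reversed. Case (a) is empty because $\mathsf{out}_D(W)\subseteq T\subseteq\bigcup\mathcal{P}$, so every such edge \emph{is} reversed. Case (b) is empty by the previous paragraph: the only reversed edges are those of $\bigcup\mathcal{P}$, and no such edge was in $\mathsf{in}_D(W)$. Therefore $\mathsf{out}_{D^{*}}(W)=\varnothing$, and since $u\in W$ and $v\notin W$ no $u\to v$ path can exist in $D^{*}$.

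The only place requiring care is the crossing-count argument, since paths might in principle oscillate between $W$ and $\bar W$ many times; the constraint $\mathsf{out}_D(W)\subseteq T$ together with orthogonality pins the oscillation down to a single crossing, and this is the crux of the proof.
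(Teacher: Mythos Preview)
Your proof is correct and follows essentially the same approach as the paper: define $W$ as the set of vertices reachable from $u$ in $D\setminus T$, observe $\mathsf{out}_D(W)\subseteq T$, argue that no $P\in\mathcal{P}$ contributes an edge to $\mathsf{in}_D(W)$, and conclude $\mathsf{out}_{D^{*}}(W)=\varnothing$. Your crossing-count phrasing is slightly more explicit than the paper's (which simply notes that a $P$ with an ingoing edge to $W$ would have to meet $T$ twice), but the content is identical.
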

\begin{proof}
Let  $ T $ be an Erdős-Menger 
cut orthogonal to $ \mathcal{P} $  and we define $ W\subseteq V(D) $ to  be the set of those vertices that are 
reachable from 
$ u $ in $ D $ without using any edge from $ T $ (see Figure \ref{fig ErdMeng}). On 
the one hand,  $ \mathsf{out}_D(W)=T $. On the other hand, there is no $ P\in \mathcal{P} $ 
which 
uses an ingoing edge of $ W $ because such a $ P $ would meet at least two edges from $ T $ contradicting the fact that $ T $ is 
orthogonal to $ \mathcal{P} $. Thus by reversing $ \bigcup \mathcal{P} $ we reverse all the outgoing edges of $ W$ and 
none of 
the ingoing edges.
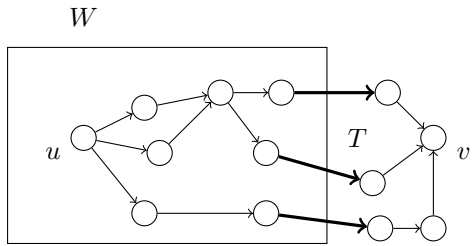
\begin{figure}[H]
\centering
\begin{tikzpicture}

\draw  (-1.8,0) rectangle (2.4,-2.6);

\node [draw,circle] (v1) at (-0.8,-1.2) {};
\node [draw,circle] (v2) at (0,-0.8) {};
\node [draw,circle] (v3) at (1,-0.6) {};
\node [draw,circle] (v4) at (1.8,-0.6) {};
\node [draw,circle] (v5) at (3.2,-0.6) {};
\node [draw,circle] (v6) at (3.8,-1.2) {};

\draw [ ->] (v1) edge (v2);
\draw [ ->] (v2) edge (v3);
\draw [ ->] (v3) edge (v4);
\draw [->, very thick] (v4) edge (v5);
\draw [ ->] (v5) edge (v6);
\node [draw,circle] (v7) at (0.2,-1.4) {};
\node [draw,circle] (v8) at (1.6,-1.4) {};
\node [draw,circle] (v9) at (3,-1.8) {};
\draw [->] (v1) edge (v7);
\draw [ ->] (v7) edge (v3);
\draw [ ->] (v3) edge (v8);
\draw [very thick, ->] (v8) edge (v9);
\draw [ ->] (v9) edge (v6);

\node [draw,circle] (v10) at (0,-2.2) {};
\node [draw,circle] (v11) at (1.6,-2.2) {};
\node [draw,circle] (v12) at (3.1,-2.4) {};
\node[draw,circle] (v13) at (3.8,-2.4) {};

\draw [ ->] (v1) edge (v10);
\draw [ ->] (v10) edge (v11);
\draw [very thick,->] (v11) edge (v12);

\node at (-0.8,0.4) {$W$};
\node at (-1.2,-1.4) {$u$};
\node at (4.2,-1.4) {$v$};
\node at (2.8,-1.2) {$T$};
\draw [ ->] (v12) edge (v13);
\draw [ ->] (v13) edge (v6);
\end{tikzpicture}
\caption{The Erdős-Menger 
cut $ T $ and the vertex set $ W $.}\label{fig ErdMeng}
\end{figure} 
\end{proof}

\begin{cor}\label{separate two}
If $ u\neq v $ belong to the same strong component of a digraph $ D $ and $ \lambda(u,v;D)+\lambda(v,u;D)\geq \aleph_0 $, 
then  there is a $ 
\mathcal{C}\in \mathbb{RS}(D) $ of length at most $ \lambda(u,v;D)+\lambda(v,u;D) $ such that $ u $ and $ v $ are in different 
strong component 
of $ D\circlearrowleft \mathcal{C} $. 
\end{cor}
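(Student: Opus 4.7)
The plan is to assemble the three preceding ingredients: use the Infinite Menger's Theorem to produce an Erdős-Menger path-system between $u$ and $v$ in one direction, use Corollary \ref{flip path-sys} to reverse exactly its edges by a short reversion sequence, and use Proposition \ref{EMeng flip sep} to conclude that after the reversion no path remains in that direction, so $u$ and $v$ are no longer in the same strong component.

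By interchanging the roles of $u$ and $v$ if necessary, I may assume that $\lambda(u,v;D) \leq \lambda(v,u;D)$. Since $\lambda(u,v;D) + \lambda(v,u;D) \geq \aleph_0$, this forces $\lambda(v,u;D) \geq \aleph_0$, which is exactly the hypothesis needed to invoke Corollary \ref{flip path-sys}. I first apply Theorem \ref{InfMeng} to obtain an Erdős-Menger $u \to v$ path-system $\mathcal{P}$ in $D$; by definition $|\mathcal{P}| = \lambda(u,v;D)$, and by the WLOG reduction $|\mathcal{P}| \leq \lambda(v,u;D)$.

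Next, Corollary \ref{flip path-sys} applied to $\mathcal{P}$ (with no prescribed avoidance set) yields a $\mathcal{C} \in \mathbb{RS}(D)$ of length at most $\lambda(v,u;D) \leq \lambda(u,v;D) + \lambda(v,u;D)$ satisfying $D \setminus (D \circlearrowleft \mathcal{C}) = \bigcup \mathcal{P}$. Because $\mathcal{P}$ is Erdős-Menger, Proposition \ref{EMeng flip sep} then guarantees that $D \circlearrowleft \mathcal{C}$ contains no $u \to v$ path, so $u$ and $v$ lie in distinct strong components of $D \circlearrowleft \mathcal{C}$, as required.

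I do not expect a genuine obstacle here once the tools are in place; the only subtlety is the symmetric reduction, which ensures that the direction in which we reverse the Erdős-Menger path-system is the one whose path-system size is bounded by the opposite direction's connectivity. This is essential because Corollary \ref{flip path-sys} performs its reversions by repeatedly pairing the paths of $\mathcal{P}$ with parallel paths in the opposite direction, and thus needs at least $|\mathcal{P}|$ many edge-disjoint $v \to u$ paths to be available.
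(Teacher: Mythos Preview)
Your proof is correct and follows essentially the same route as the paper: the same symmetry reduction to $\lambda(u,v;D)\leq\lambda(v,u;D)$, then the Erd\H{o}s--Menger $u\to v$ path-system from Theorem~\ref{InfMeng}, reversed via Corollary~\ref{flip path-sys}, and Proposition~\ref{EMeng flip sep} to finish. Your write-up is in fact more explicit than the paper's about why the symmetry reduction is needed (to ensure $\lambda(v,u;D)\geq\aleph_0$ and $|\mathcal{P}|\leq\lambda(v,u;D)$ so that Corollary~\ref{flip path-sys} applies).
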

\begin{proof}
By symmetry we may assume that $ \lambda(v,u;D)\geq  \lambda(u,v;D)$.  Take an Erdős-Menger $ u\rightarrow v $ 
path-system $ \mathcal{P} $ in 
$ D $  and reverse its edges applying Corollary \ref{flip path-sys}. By Proposition \ref{EMeng flip sep}, $ u $ and $ v $ are no 
longer in the same strong 
component and hence we are done.
\end{proof}

\section{Scattered digraphs and elementary submodels}
Instead of  just destroying all the infinite local edge-connectivities inside every strong component it will be more convenient to 
acquire a slightly 
stronger property by a suitable reversion sequence.
We call a digraph $ D $ \textbf{scattered} if  there is no strong component of $ D $ that can be subdivided by a suitable 
reversion 
sequence. More precisely,  
whenever $ u,v 
$ are 
in the same strong component of $ D $ they are in the same strong 
component of $ D \circlearrowleft \mathcal{C} $ for every 
$ \mathcal{C}\in \mathbb{RS}(D) $.  Corollary \ref{separate two} shows that being scattered implies that the local 
edge-connectivities are finite in every 
strong 
component.  A digraph $ D $ is $\boldsymbol{ (W,\kappa) }$\textbf{-scattered}, where $ W\subseteq V(D) $ and $ \kappa $ 
is a 
cardinal, if 
whenever $ u,v\in W $ are in the same strong component of $ D $ they are in the same 
strong 
component of $ D \circlearrowleft \mathcal{C} $ for every 
$ \mathcal{C}\in \mathbb{RS}(D) $ of length less than $ \kappa $. Note that  $ (V(D), \left|D\right|^{+}) $-scattered is 
equivalent with scattered.

\begin{prop}\label{max scat locons}
Let  $ D $ be $ (W,\kappa) $-scattered where $ \kappa >\aleph_0 $ and suppose that $ u, v\in 
W $ are distinct vertices in the same strong component of $ D $. If $ 
\lambda(u,v;D)+\lambda(v,u;D) $ is infinite, then $ \lambda(u,v;D), \lambda(v,u;D)\geq \kappa $.
\end{prop}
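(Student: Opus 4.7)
The plan is to prove the contrapositive: if the smaller of the two local edge-connectivities were less than $\kappa$, I would build a reversion sequence of length strictly less than $\kappa$ that separates $u$ from $v$, directly contradicting $(W,\kappa)$-scatteredness. By symmetry, assume $\lambda(u,v;D)\leq\lambda(v,u;D)$ and suppose for contradiction that $\lambda(u,v;D)<\kappa$. Since the sum $\lambda(u,v;D)+\lambda(v,u;D)$ is infinite and $\lambda(v,u;D)$ is the larger term, it follows automatically that $\lambda(v,u;D)\geq\aleph_0$.

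I would then invoke the Infinite Menger Theorem (Theorem \ref{InfMeng}) to obtain an Erd\H os-Menger $u\rightarrow v$ path-system $\mathcal{P}$ of size $\lambda(u,v;D)$. Because $|\mathcal{P}|=\lambda(u,v;D)\leq\lambda(v,u;D)$ and $\lambda(v,u;D)\geq\aleph_0$, Corollary \ref{flip path-sys} produces a reversion sequence $\mathcal{C}\in\mathbb{RS}(D)$ that reverses exactly the edges of $\bigcup\mathcal{P}$. Proposition \ref{EMeng flip sep} then guarantees that $D\circlearrowleft\mathcal{C}$ has no $u\rightarrow v$ path, so $u$ and $v$ lie in different strong components of $D\circlearrowleft\mathcal{C}$.

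The only real obstacle is bounding the \emph{length} of $\mathcal{C}$ by something strictly less than $\kappa$, since the bound stated in Corollary \ref{flip path-sys} is only $\lambda(v,u;D)$, which may well be $\geq\kappa$. To get around this I would look inside the proof of Corollary \ref{flip path-sys}: that $\mathcal{C}$ is constructed as the concatenation of $|\mathcal{P}|$ many length-$\omega$ blocks coming from Proposition \ref{path reset}, one per path, so its ordinal length is exactly $\omega\cdot|\mathcal{P}|$. If $|\mathcal{P}|$ is finite this length is just $\omega<\kappa$ (using $\kappa>\aleph_0$), and if $|\mathcal{P}|=\mu$ is an infinite cardinal then $\omega\cdot\beta<\mu$ for every $\beta<\mu$, whence $\omega\cdot\mu=\mu<\kappa$. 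Either way the length stays below $\kappa$. Since $u,v\in W$ were in the same strong component of $D$, this contradicts $(W,\kappa)$-scatteredness, so $\lambda(u,v;D)\geq\kappa$; symmetrically $\lambda(v,u;D)\geq\kappa$, completing the proof.
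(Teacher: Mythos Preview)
Your proof is correct and follows exactly the same route as the paper: assume by symmetry that $\lambda(u,v;D)\leq\lambda(v,u;D)$, take an Erd\H os--Menger $u\rightarrow v$ path-system $\mathcal{P}$, reverse it via Corollary~\ref{flip path-sys}, and apply Proposition~\ref{EMeng flip sep} to separate $u$ from $v$, contradicting $(W,\kappa)$-scatteredness. In fact you are more careful than the paper on one point: the paper simply asserts that the $\mathcal{C}$ from Corollary~\ref{flip path-sys} has length $\lambda(u,v;D)<\kappa$, whereas the corollary as stated only bounds the length by $\lambda(v,u;D)$; your inspection of its proof (length $\omega\cdot|\mathcal{P}|$, hence ${<}\kappa$ in both the finite and infinite cases since $\kappa>\aleph_0$) fills exactly the gap the paper leaves implicit.
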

\begin{proof}
By symmetry we may assume that $ \lambda(u,v;D)\leq\lambda(v,u;D) $. Suppose for contradiction that $ 
\lambda(u,v;D)<\kappa $. Let $ 
\mathcal{P} $ be an Erdős-Menger
$ u\rightarrow v $ path-system. By applying Corollary \ref{flip path-sys}, there is a $ \mathcal{C}\in \mathbb{RS}(D) $ of 
length 
$ \lambda(u,v;D)<\kappa $  which reverses exactly $ \bigcup \mathcal{P} $ in $ D $. By Proposition
\ref{EMeng flip sep}, $ v $ is not reachable from $ u $ in $ D\circlearrowleft \mathcal{C} $.
Since $ u $ and $ v $ were in the same strong component of $ D $, it contradicts the fact  that $  D $ is $ (W,\kappa) $-scattered. 
\end{proof}

Our  goal is to prove that for every digraph $ D $  there is a  scattered $ D^{*} $  reachable from $ D $ (which is a strengthening 
of Theorem 
\ref{local make small thm}). Elementary submodels will play an important role in the proof, and we will need a couple of 
statements to be 
able to use them properly.

\begin{prop}\label{lock reach ins part}
Suppose that  $ M\supseteq \left|M\right| $ is an elementary submodel,  $ D\in M $ is a digraph and $ D^{*} $ (which may 
fail to be in $ M $) is locally reachable from $ D 
$. 
Then 
$ D^{*}\cap M $ is locally reachable from $ D\cap M $.
\end{prop}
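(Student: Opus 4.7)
The plan is as follows. Fix an arbitrary finite $F \subseteq \mathsf{Un}(D\cap M)=\mathsf{Un}(D)\cap M$; we must produce a reversion sequence $\mathcal{C}\in \mathbb{RS}(D\cap M)$ with $((D\cap M)\circlearrowleft \mathcal{C})(F)=(D^{*}\cap M)(F)$. The strategy is to construct such a $\mathcal{C}$ \emph{inside} $M$ via an application of elementarity; once this is done, the fact that $\mathcal{C}$ is finite and lies in $M$ will force every edge it touches to belong to $M$, hence to $D\cap M$, and since reversion acts locally on each edge, the effect on $F$ will be the same whether we run $\mathcal{C}$ in $D$ or in $D\cap M$.

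The first step is to check that both $F$ and the target orientation $\vec{F}_0 := D^{*}(F)$ belong to $M$. The assumption $M\supseteq |M|$ gives $\omega\subseteq M$, and a standard fact about elementary submodels with $\omega\subseteq M$ yields $[M]^{<\aleph_0}\subseteq M$ (see the Appendix). Since each element of $F$ lies in $M$, it follows that $F\in M$. Moreover, for every edge $e=\langle\{u,v\},i\rangle\in F$ both orientations $\langle u,v,i\rangle$ and $\langle v,u,i\rangle$ are finite tuples of elements of $M$, hence themselves members of $M$; so $\vec{F}_0$ is a finite subset of $M$ and therefore $\vec{F}_0\in M$.

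Now I combine the local reachability of $D^{*}$ from $D$ with Proposition~\ref{finite enough} to obtain a \emph{finite} $\mathcal{C}_0 \in \mathbb{RS}(D)$ satisfying $(D\circlearrowleft\mathcal{C}_0)(F)=\vec{F}_0$. This externally verifies the existential statement ``there exists a finite $\mathcal{C}\in\mathbb{RS}(D)$ with $(D\circlearrowleft\mathcal{C})(F)=\vec{F}_0$'', all of whose parameters $D,F,\vec{F}_0$ lie in $M$. By elementarity, a witness $\mathcal{C}$ can then be chosen inside $M$.

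Finally, I observe that this $\mathcal{C}$ works in $D\cap M$. Being a finite element of $M$ of some length $n\in\omega\subseteq M$, each cycle $C_k\in\mathcal{C}$ lies in $M$, and therefore each edge of $C_k$ lies in $M$; thus $E(\mathcal{C})\subseteq \mathsf{Un}(D)\cap M=\mathsf{Un}(D\cap M)$. Since reversion operates edge-by-edge and $D$ and $D\cap M$ agree on these edges, $\mathcal{C}\in\mathbb{RS}(D\cap M)$ and $((D\cap M)\circlearrowleft\mathcal{C})(F)=(D\circlearrowleft\mathcal{C})(F)=\vec{F}_0=(D^{*}\cap M)(F)$, as required. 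The only real obstacle is the bookkeeping around closure of $M$ under finite subsets and tuples, which is precisely the role played by the hypothesis $|M|\subseteq M$.
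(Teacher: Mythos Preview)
Your proof is correct and takes a genuinely different route from the paper's. The paper argues constructively: it picks an arbitrary finite sequence $\mathcal{C}$ of edge-disjoint cycles in $D$ achieving $D^{*}(F)$ on $F$, then for each cycle $C_i$ it decomposes $C_i\setminus M$ into finitely many paths $P_{i,j}$ with endpoints $u_{i,j},v_{i,j}\in M$, invokes Fact~\ref{big loc connect} to get $\lambda(u_{i,j},v_{i,j};D\cap M)=|M|$, replaces each $P_{i,j}$ by a fresh path $Q_{i,j}\subseteq D\cap M$ avoiding $F\cup E(\mathcal{C})$, and finally decomposes the resulting Eulerian finite subdigraph of $D\cap M$ into cycles. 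Your argument bypasses all of this by observing that $F$ and $\vec{F}_0=D^{*}(F)$ are finite subsets of $M$ (hence elements of $M$), so the existential statement ``there is a finite $\mathcal{C}\in\mathbb{RS}(D)$ with $(D\circlearrowleft\mathcal{C})(F)=\vec{F}_0$'' has all parameters in $M$ and reflects to give a witness $\mathcal{C}\in M$; being finite and in $M$, this $\mathcal{C}$ lives entirely in $D\cap M$.

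What each approach buys: yours is shorter, cleaner, and uses elementarity in the most direct possible way. The paper's approach, while longer, is essentially the same path-replacement trick that reappears in the proof of Claim~\ref{sep 4ever} inside Lemma~\ref{loc reach scat down} and again in Lemma~\ref{avoid down before}, so it serves double duty as a warm-up for those later arguments where a pure reflection argument is not available (because the target orientation there is built by a transfinite process rather than witnessed by a single first-order formula with parameters in $M$).
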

\begin{proof}
Let $ F\subseteq \mathsf{Un}(D\cap M)$ be 
finite. By using Propositions \ref{finite enough} and \ref{fin many cycle}
and the fact that $ D^{*} $ is locally reachable from $ D $,  we conclude that there  is a 
finite sequence
$ \left\langle C_i\right\rangle_{i<n}=: \mathcal{C} \in \mathbb{RS}(D) $ of 
edge-disjoint cycles of $ D $ for which $ (D\circlearrowleft \mathcal{C})(F)=D^{*}(F) $. We replace those segments of the 
cycles that are not in $M$ with paths going inside $ M $ in the following way. For every  $ i<n $, the set $ C_i\setminus 
M$  consists of finitely many 
pairwise edge-disjoint paths, say $ P_{i,j} $ for $j<n_i$, where $ P_{i,j} $ goes from some $ u_{i,j} $ to some $ v_{i,j} $ with $ 
u_{i,j},v_{i,j}\in M 
$ (see Figure \ref{fig cycle inside}) and internally disjoint from $V(D)\cap M$.  
The existence of these paths 
implies (using Fact \ref{big loc connect}) that $ \lambda(u_{i,j},v_{i,j};D)>\left|M\right|$ and  $ \lambda(u_{i,j},v_{i,j};D\cap 
M)=\left|M\right| $. 
For $i<n$ and $j<n_i$, let $ Q_{i,j}$ be  pairwise edge-disjoint paths in $ D\cap M $ where $ Q_{i,j} $ is a $ u_{i,j} \rightarrow 
v_{i,j}$ path and they do not use any edge from the finite set  $ F \cup E(\mathcal{C}) $. For every vertex of the finite 
subdigraph  
 \[ \left( \bigcup_{i<n}C_i \setminus \bigcup_{i<n,j<n_i}P_{i,j}  \right)\cup  \bigcup_{i<n,j<n_i}Q_{i,j}, \] of $ D\cap M $, 
 the indegree is equal to the outdegree  therefore it is the union of edge-disjoint 
 cycles. By making a sequence $ \mathcal{E} \in \mathbb{RS}(D\cap M)$ from these cycles, 
 we have  
\[ ((D\cap M)\circlearrowleft \mathcal{E})(F)=(D\circlearrowleft \mathcal{E})(F)=(D\circlearrowleft 
\mathcal{C})(F)=D^{*}(F)=(D^{*}\cap M)(F). \] Thus $ D^{*}\cap M $ is  locally 
 reachable from $ D\cap  M$.
 \begin{figure}[H]
 \centering
\begin{tikzpicture}[scale=1.3]

\draw  (-1.8,0) rectangle (2.4,-2.6);

\node at (-1.4,0.2) {$M$};
\node [draw,circle] at (1.8,-0.6) {};
\node [draw,circle] (v1) at (1.8,-0.6) {};
\node [draw,circle] (v2) at (1.4,0.6) {};
\node [draw,circle] (v3) at (0.6,0.6) {};
\node [draw,circle] (v4) at (0.6,-0.4) {};
\node [draw,circle] (v5) at (0,-0.6) {};
\node [draw,circle] (v6) at (-0.4,0.6) {};
\node [draw,circle] (v7) at (-1,-0.6) {};
\node [draw,circle] (v8) at (-1,-1.4) {};
\node [draw,circle] (v9) at (0.2,-1.6) {};
\node [draw,circle] (v10) at (1.2,-1.6) {};

\draw [ dashed,->] (v1) edge (v2);
\draw [dashed, ->] (v2) edge (v3);
\draw [ dashed,->] (v3) edge (v4);
\draw [ ->] (v4) edge (v5);
\draw [ dashed,->] (v5) edge (v6);
\draw [ dashed,->] (v6) edge (v7);
\draw [ ->] (v7) edge (v8);
\draw [ ->] (v8) edge (v9);
\draw [ ->] (v9) edge (v10);
\draw [ ->] (v10) edge (v1);
\node at (-0.6,-1.8) {$C_0$};
\node at (1,1) {$P_{0,0}$};
\node at (-0.8,0.8) {$P_{0,1}$};
\node at (2,-1) {$u_{0,0}$};
\node at (1,-0.6) {$u_{0,0}$};
\node at (0,-1) {$u_{1,0}$};
\node at (-1.5,-0.6) {$u_{1,1}$};
\end{tikzpicture}
 \caption{Replacing the dashed part of cycle $ C_0 $.}\label{fig cycle inside}
 \end{figure}
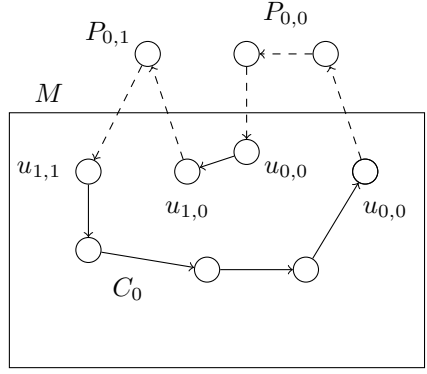
\end{proof}

Now we want to scatter a digraph $ D $ as much as possible, reversing edges only inside an elementary submodel $ M\ni D $ and 
remaining locally 
reachable from $ D $. In the applications we will  have 
to start with some $ L $ which may fail to be in $ M $ but which will be locally reachable from $ D $  and   identical to  $ D $ 
outside of $ M $.
\begin{lem}\label{loc reach scat down}
Suppose that  $ M\supseteq \left|M\right| $ is an elementary submodel,  $ D\in M $ is a digraph and  $ L $ (which may 
fail to 
be in $ M $) is locally reachable from $ D $  and satisfies
$ L\setminus M=D\setminus M $.  Then there is a $ (V(D)\cap M,\left|M\right|^{+}) $-scattered $ D^{*}=D^{*}(M,L) $ (does 
not depend on $ D $) 
with $ 
D^{*}\setminus M=D\setminus M $ 
which is locally 
reachable from $ D $. 

\end{lem}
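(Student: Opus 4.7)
The plan is a transfinite recursion that produces orientations $L_0, L_1, \ldots, L_\theta$ of $\mathsf{Un}(D)$, each agreeing with $D$ on $\mathsf{Un}(D)\setminus M$ and each locally reachable from $D$, so that $D^{*}:=L_\theta$ is $(V(D)\cap M,|M|^+)$-scattered. Start with $L_0:=L$. At a successor stage $\alpha+1$, if $L_\alpha$ is already $(V(D)\cap M,|M|^+)$-scattered, halt. Otherwise pick $u_\alpha,v_\alpha\in V(D)\cap M$ in the same strong component of $L_\alpha$ that can be separated by some reversion of length $\leq|M|$, and, using the sub-claim described below, fix a reversion $\mathcal{C}_\alpha\in\mathbb{RS}(L_\alpha)$ of length $\leq|M|$ whose cycles all lie inside $L_\alpha\cap M$ which separates them. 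Set $L_{\alpha+1}:=L_\alpha\circlearrowleft\mathcal{C}_\alpha$; local reachability from $D$ is preserved by Corollary \ref{loc reach trans}, and $L_{\alpha+1}\setminus M=L_\alpha\setminus M=D\setminus M$ since no $M$-external edge is touched. At limit stages $\lambda$, take a generalized limit $L_\lambda$: by Proposition \ref{lim loc reach} it is locally reachable from $D$, and since each $L_\beta$ ($\beta<\lambda$) agrees with $D$ outside $M$ on every individual edge, so does $L_\lambda$.

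For termination I would observe that once $u,v\in V(D)\cap M$ are in distinct strong components of some $L_\alpha$, they remain so in every later $L_\beta$: strong connectivity is determined by the presence of a directed path, hence witnessed by finite subdigraphs, so Corollary \ref{loc con not incr} (combined with Proposition \ref{finite enough} to pass to local reachability) prevents re-merging. Each successor thus strictly refines the strong component partition of $V(D)\cap M$, which has at most $|V(D)\cap M|\leq|M|$ classes. Hence the recursion halts at some $\theta<|M|^+$.

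The main obstacle is the sub-claim asserting that the separating reversion $\mathcal{C}_\alpha$ can be chosen with cycles entirely in $L_\alpha\cap M$. Without loss of generality the original separating reversion breaks the $u_\alpha\to v_\alpha$ direction, so by Theorem \ref{InfMeng} $\lambda(u_\alpha,v_\alpha;L_\alpha)\leq|M|$ and an Erdős–Menger $u_\alpha\to v_\alpha$ path system $\mathcal{P}$ in $L_\alpha$ of this size exists. I would re-route the outside-$M$ sub-paths through $M$ in the spirit of Proposition \ref{lock reach ins part}: every maximal outside-$M$ subpath runs between vertices $a,b\in V(D)\cap M$, and by Fact \ref{big loc connect} its mere existence in $L_\alpha\setminus M=D\setminus M$ forces $\lambda(a,b;D\cap M)=|M|$, yielding an abundance of pairwise edge-disjoint inside-$M$ $a\to b$ detours avoiding any prescribed set of fewer than $|M|$ edges. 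Splicing these into $\mathcal{P}$ produces an Erdős–Menger $u_\alpha\to v_\alpha$ system $\mathcal{P}'$ in $L_\alpha$ with all paths lying in $L_\alpha\cap M$; reversing $\mathcal{P}'$ via Corollary \ref{flip path-sys} yields the desired $\mathcal{C}_\alpha$, and by Proposition \ref{EMeng flip sep} this disconnects $u_\alpha$ from $v_\alpha$ in the entire $L_{\alpha+1}$. The delicate point, which I expect to be the technical heart of the argument, is ensuring that $L_\alpha\cap M$ always retains enough of the connectivity of $D\cap M$ for the re-routing to succeed; this requires combining the elementarity hypothesis $M\supseteq|M|$ with the fact that $L_\alpha\cap M$ remains locally reachable from $D\cap M$ by Proposition \ref{lock reach ins part} throughout the recursion.
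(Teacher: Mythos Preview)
Your overall architecture (recursion with generalized limits at limit stages, termination via the strong-component partition of $V(D)\cap M$) is sound, and your permanence-of-separation argument via Corollary~\ref{loc con not incr} and local reachability is correct. The gap is precisely where you locate it: the sub-claim that the separating reversion $\mathcal{C}_\alpha$ can be taken with all cycles inside $L_\alpha\cap M$. Two concrete failures occur. First, after splicing inside-$M$ detours into $\mathcal{P}$ you assert that $\mathcal{P}'$ is an Erd\H{o}s--Menger system \emph{in $L_\alpha$}; but the orthogonal cut $T$ for $\mathcal{P}$ may contain edges lying on the outside-$M$ subpaths you removed, so $T$ need no longer be orthogonal to $\mathcal{P}'$, and a maximum-size system is not automatically Erd\H{o}s--Menger in the infinite setting. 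Without this, Proposition~\ref{EMeng flip sep} does not disconnect $u_\alpha$ from $v_\alpha$ in the full $L_{\alpha+1}$. Second, and more basically, the detours themselves need not exist: Fact~\ref{big loc connect} gives $\lambda(a,b;D\cap M)=|M|$, but you need $|M|$ edge-disjoint $a\to b$ paths in $L_\alpha\cap M$, which is a reorientation of $D\cap M$ that may have destroyed this connectivity (Corollary~\ref{loc con not incr} only bounds connectivity from above under local reachability, never from below). The same problem recurs when you invoke Corollary~\ref{flip path-sys}: the $v_\alpha\to u_\alpha$ paths it consumes live in $L_\alpha$, not in $L_\alpha\cap M$, so $\mathcal{C}_\alpha$ may still leave $M$.

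The paper circumvents exactly this obstruction by \emph{not} insisting that $\mathcal{C}_\beta$ stay inside $M$. It lets $\mathcal{C}_\beta$ be any witness to non-scatteredness, applies it to get $L_\beta'$, and then forcibly resets the outside-$M$ edges to their $D$-orientation, defining $L_{\beta+1}:=(L_\beta'\cap M)\cup(D\setminus M)$. The burden then shifts to showing that this reset does not re-merge $u_\beta$ and $v_\beta$ (Claim~\ref{sep 4ever}); the point is that any hypothetical $u_\beta\to v_\beta$ path in $L_{\beta+1}$ would have outside-$M$ segments which, by Fact~\ref{big loc connect} and the cardinality bound $|D\vartriangle L_\beta'|\le|M|$, can be replaced by paths in the \emph{full} $L_\beta'$ (not merely in $L_\beta'\cap M$), contradicting the separation there. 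This re-routing succeeds precisely because it is carried out in the unrestricted digraph $L_\beta'$, whereas your version tries to re-route inside $M$ where the needed connectivity is not guaranteed.
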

\begin{proof}
Let $ \kappa:=\left|M\right| $ and let $ \left\langle \{ u_\alpha,v_\alpha \}: \alpha<\kappa^{+}  \right\rangle\ $ be a sequence 
with 
range 
$ \left[ V(D)\cap M\right]^{2}  $ in which the appearance of each element of  $ \left[ V(D)\cap M\right]^{2} $ is 
unbounded. We define a  sequence $ 
\left\langle 
L_\alpha: \alpha\leq \kappa^{+} \right\rangle  $  starting with  $ 
L_0:=L$. If $ L_\beta $ is $ (\{ u_\beta, v_\beta \},  \kappa^{+}) $-scattered, then let
$ L_{\beta+1} :=L_\beta  $. If it is not and $ \mathcal{C}_\beta $ is a witness for it, then let 
$ L_{\beta}' :=L_\beta\circlearrowleft \mathcal{C}_{\beta} $ and  we define $ L_{\beta+1} $ to be 
$ (L_{\beta}'\cap M) \cup (L\setminus M) $. If $ \alpha $ is a limit ordinal and $ L_\beta $ is defined
for $ \beta<\alpha $, then let $L_\alpha$ be a generalized limit of $ \left\langle L_{\beta}: \beta<\alpha\right\rangle $. Finally  we 
define $ D^{*} $ 
to be $ 
L_{\kappa^{+}} $. Note that  $ 
 L\setminus M=D\setminus M $ by assumption and the recursion preserves this property thus $ L_\alpha\setminus 
 M=D\setminus M $  for $ \alpha \leq \kappa^{+} $.

\begin{claim}\label{L alpha loc reach}
$ L_\alpha $ is locally reachable from  $ D $ for every $ \alpha\leq \kappa^{+} $.
\end{claim}
\begin{proof}
For $ \alpha=0 $ it holds by assumption. For limit steps, it follows from Corollary \ref{lim loc reach}. For a successor step, $ 
L_{\beta}'=L_\beta\circlearrowleft \mathcal{C}_{\beta} $ is locally reachable from 
$ D $ since $ L_\beta $ is locally reachable by induction. By applying Proposition \ref{lock reach ins part} with $ 
M,D$ and $ L_{\beta}' $, 
we obtain that $ L_{\beta+1}\cap M  $ is locally reachable from $ D\cap M $. Since 
$  L_{\beta+1}\setminus M=D\setminus M $, it implies that 
$ L_{\beta+1} $ is locally reachable from~$ D $.
\end{proof}
\begin{claim}\label{sep 4ever}
If $ L_{\beta+1}\neq L_\beta $ for some $ \beta<\kappa^{+} $, then $ u_\beta $ and $ v_\beta $ are in different 
strong components of $ L_\alpha $ for $ \beta<\alpha \leq \kappa^{+} $.
\end{claim}
\begin{proof}
By construction, $ u_\beta, v_\beta $ are in different strong components of $ L_{\beta}' $. By symmetry we may  assume 
that $ v_\beta $ is 
not reachable from $ u_\beta $ in $ L_{\beta}' $. First we show that this remains 
true in $ L_{\beta+1} $ as well. Suppose for a contradiction that $ P $ is a $ u_\beta\rightarrow v_\beta $ path in $ 
L_{\beta+1} $. We show that we must have such a path already in $ L_{\beta}' $. Since $ L_{\beta+1}\cap M = 
L_{\beta}'\cap M $, we have $ M\cap P\subseteq L_\beta' $. 
We prove that the  $ P\setminus M $ part of $ P $  can be replaced in $ L_{\beta}' $. 
  Indeed, $ P\setminus M $ consists of edge-disjoint paths $ Q_i$ for $i<n$ where $ Q_i $ is a $ u'_i\rightarrow v'_i $ path
  for some distinct $ u'_i, v'_i\in V(D)\cap M $ and internally disjoint from $V(D)\cap M$. Since  $ L_{\beta+1}\setminus 
  M=D\setminus M $, each $ Q_i $ lies in $ D\setminus M $ and then Fact \ref{big loc 
  connect} shows 
  that
  $ \lambda(u_i,v_i; D)>\left|M\right|=\kappa$. Because of $ \left|D\vartriangle 
  L_{\beta}'\right|\leq \left|M\right|+\left|\mathcal{C}_\beta\right|\cdot \aleph_0=\kappa $, we may conclude that  $ 
  \lambda(u_i,v_i; L_{\beta}')>\kappa $. In particular, $ v_i $ is reachable 
  from $ u_i $ in $ L_{\beta}'$, but then $ 
  v_\beta $ is reachable from $ u_\beta $ in $ L_{\beta}' $ which is a contradiction.

Let $ T \subseteq L_{\beta+1}$ be a directed cut  witnessing the non-existence of a path from  $ u_\beta $ to $ v_\beta $ 
in  $ L_{\beta+1} $. Note that the
edges 
$ \mathsf{Un}(T\setminus M) $ are oriented in $ T $ as in $ D $ because $(T\setminus M)\subseteq (L_{\beta+1}\setminus 
M)=(D\setminus 
M) $. We show by transfinite induction that $T\subseteq L_\alpha $  whenever $ \alpha \geq \beta+1 $.  The initial step and 
the limit steps are obvious. Suppose we know the statement for some $ \alpha $. Since the orientation  $ L_{\alpha}' $ is 
reachable from $ 
L_\alpha $ by the construction, $ T\subseteq  L_{\alpha}'$ also holds. We obtain $ L_{\alpha+1} $ from  $ L_{\alpha}' $ by 
reversing the 
edges outside of $ M $ to the direction defined by $ D $. Since $ D $ and $ T $ orients the edges $ \mathsf{Un}(T\setminus M) 
$ the same way, it does not reverse any edge in $ T $ and hence $ T\subseteq L_{\alpha+1} $.
\end{proof}
 It remains to show that $ D^{*} $ is $ (V(D)\cap M,\kappa^{+})$-scattered. For every distinct 
$ u, v\in V(D)\cap M $, if there is an ordinal $ \beta $ for which  
$ \{ u_\beta, v_\beta \}=\{ u,v \} $  and $ L_\beta $ 
is not $ (\{ u_\beta, v_\beta \}, \kappa^{+}) $-scattered then  it must be unique by Claim 
\ref{sep 4ever}  and we define $ \beta_{u,v}:=\beta+1 $. 
If 
there is no 
such a $ \beta $, then $ \beta_{u,v}:=0 $. The terminal 
segment of the sequence $ \left\langle L_\alpha: \alpha<\kappa^{+}\right\rangle  $ from $ \sup \{\beta_{u,v}:  \{ u,v \}\in 
\left[ V(D)\cap M\right]^{2} 
\}<\kappa^{+}$ is constant. Since the appearance of each  $ \{ u,v \}\in \left[ V(D)\cap M\right]^{2} $ is unbounded in
$ \left\langle \{ u_\alpha,v_\alpha \}: \alpha<\kappa^{+}  \right\rangle\ $, it ensures that
 $ L_{\kappa^{+}} $ is $ (V(D)\cap M, \kappa^{+})$-scattered.
\end{proof}

The last tool we will  need about elementary submodels is an easy technical lemma which is a strengthening of Fact \ref{exist 
submod}. One can show that every uncountable elementary submodel can be written as the union of an increasing 
continuous chain of 
smaller sized elementary submodels. We will need a stronger  property which is not automatically true.  We say that 
the elementary submodel $ M $ has the  \textbf{chain property}  if either $ \left|M\right|=\aleph_0 $ or $ 
\left|M\right|=\kappa>\aleph_0 $ and there is a sequence $ \left\langle M_\alpha: \alpha<\kappa\right\rangle  $  of elementary 
submodels such that for all $\alpha<\kappa$:
\begin{enumerate}
\item $ \alpha\in M_{\alpha+1} $,
\item $ \left|\alpha\right|\leq \left|M_\alpha\right|<\left|M\right| $,
\item $ \left\langle M_\beta: \beta\leq\alpha \right\rangle \in M_{\alpha+1} $,
\item $M_\alpha=\bigcup_{\beta<\alpha}M_\beta$ if $\alpha$ is a limit ordinal,
\item $ M_\alpha $ has the chain property,
\item $ M=\bigcup_{\alpha<\kappa}M_\alpha $.
\end{enumerate}

\begin{obs}\label{Obs chain}
\begin{itemize}\ 
\item For an  elementary submodel $ M $ with the chain 
property  $ \left|M\right|\subseteq M $ holds.
\item  $ M_\beta\cup \{ M_\beta \} \subseteq M_\alpha $ for  all $ \beta<\alpha $ in the chain above. Indeed,  $ \beta, \left\langle 
M_\gamma: \gamma\leq \beta \right\rangle\in   M_{\beta+1} $ by assumption and  $M_\beta$ is definable from them thus 
$M_\beta \in M_{\beta+1}$. Then by applying 
Fact 
\ref{contains elem}, $ M_\beta \subseteq M_{\beta+1} $.
\item The chain property is well-defined since to decide if $ M $ has it, it is enough to know 
this  for elementary submodels which are elements of $M$.
\item If $ \left\langle M_\alpha: \alpha<\kappa\right\rangle  $ witnesses the chain property of $ M $, then so does every 
non-empty terminal segment of it.

\end{itemize}
\end{obs}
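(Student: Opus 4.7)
The plan is to verify each of the four bullets in turn, in the order they are listed, using the recursive structure of the chain property together with standard facts about elementary submodels (in particular Fact \ref{contains elem}, which says that an element of $M$ whose transitive closure is small enough is actually a subset of $M$).

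For the first bullet I would split on whether $|M|$ is countable. If $|M|=\aleph_0$, then every natural number is definable, hence lies in $M$ by elementarity, so $\omega\subseteq M$. If $|M|=\kappa>\aleph_0$ with chain $\langle M_\alpha:\alpha<\kappa\rangle$, then for each ordinal $\alpha<\kappa$, property (1) gives $\alpha\in M_{\alpha+1}$, and property (6) gives $M_{\alpha+1}\subseteq M$; hence $\alpha\in M$, proving $\kappa\subseteq M$. The second bullet is essentially done in the excerpt: property (3) ensures $\langle M_\gamma:\gamma\leq \beta\rangle\in M_{\beta+1}$, and together with $\beta\in M_{\beta+1}$ (property (1)) the set $M_\beta$ is definable from elements of $M_{\beta+1}$, so $M_\beta\in M_{\beta+1}$; then Fact \ref{contains elem} promotes this to $M_\beta\subseteq M_{\beta+1}$. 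To pass from $\beta+1$ to an arbitrary $\alpha>\beta$, I would induct on $\alpha$: at successors iterate the step just proved, and at limits apply property (4) ($M_\alpha=\bigcup_{\gamma<\alpha}M_\gamma$) to obtain $M_\beta\cup\{M_\beta\}\subseteq M_\alpha$.

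For the third bullet, I would phrase the chain property as a definition by well-founded recursion on $|M|$: the clause for $M$ with $|M|=\kappa$ refers only to the chain property of models $M_\alpha$ with $|M_\alpha|<\kappa$ (property (2)), and bullet 2 above guarantees each such $M_\alpha$ lies in $M$ (since $M_\alpha\in M_{\alpha+1}\subseteq M$). Thus the recursion is well-founded and only inspects submodels that are already elements of~$M$, making the definition intrinsic to~$M$. For the fourth bullet, given a non-empty terminal segment starting at some $\gamma_0<\kappa$, I would verify each of (1)--(6) directly: conditions (1), (2), (3), (5) are purely indexwise and transfer verbatim; condition (4) at a limit $\alpha\geq\gamma_0$ survives because $M_\alpha=\bigcup_{\beta<\alpha}M_\beta=\bigcup_{\gamma_0\leq\beta<\alpha}M_\beta$ (using bullet 2 to absorb indices below $\gamma_0$); and condition (6), the union equals $M$, persists because the tail remains cofinal.

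I do not expect a real obstacle: each item is a short unpacking of the definition. The only spot that asks for any care is bullet 4, namely checking that truncating the chain does not break the continuity clause (4) and the total-union clause (6); both reduce immediately to bullet 2, which is why I would prove the four items in the listed order.
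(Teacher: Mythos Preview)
Your proposal is correct and follows the paper's own treatment: the observation is stated with only the second bullet sketched (which you reproduce), and your expansions of the remaining three bullets are precisely the routine verifications the paper leaves implicit. The one imprecision is calling the transfer of conditions (1) and (3) in the fourth bullet ``verbatim'': after reindexing by $\alpha\mapsto\gamma_0+\alpha$ you need $\alpha,\gamma_0\in M_{\gamma_0+\alpha+1}$ and that the truncated sequence is definable there, but this follows immediately from bullet~2 and Fact~\ref{contains elem}, just as your handling of condition~(4) anticipates.
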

 
\begin{prop}\label{exist chain prop}
For every infinite cardinal $ \kappa $ and set $ X $, there is an  elementary submodel $ M\ni X $ of size $ \kappa $ that has 
the chain property.
\end{prop}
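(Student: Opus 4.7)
The plan is to prove the statement by induction on $\kappa$. The base case $\kappa = \aleph_0$ is immediate from Fact~\ref{exist submod}: any countable elementary submodel containing $X$ has the chain property by the first clause of the definition.

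For $\kappa > \aleph_0$, assume the statement for all infinite cardinals below $\kappa$. I would construct a continuous $\subseteq$-increasing chain $\langle M_\alpha : \alpha < \kappa \rangle$ of elementary submodels by transfinite recursion, aiming for $|M_\alpha| = |\alpha| + \aleph_0$ and chain property at each stage. Start with a countable chain-property $M_0 \ni X$ (from the inductive hypothesis). At a successor $\alpha = \beta+1$, apply the inductive hypothesis with cardinal $|\beta| + \aleph_0 < \kappa$ and set $M_\beta \cup \{\beta, \langle M_\gamma : \gamma \leq \beta\rangle\}$ to obtain $M_{\beta+1}$ of the desired size with chain property. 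At limit $\alpha$, take the forced union $M_\alpha := \bigcup_{\beta<\alpha} M_\beta$ (an elementary submodel by the Tarski--Vaught chain lemma, of size $|\alpha| + \aleph_0$). Then $M := \bigcup_{\alpha<\kappa} M_\alpha$ has size $\kappa$ and contains $X$.

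I would then verify the chain property of $M$ with witness $\langle M_\alpha : \alpha < \kappa\rangle$: conditions (1)--(4) and (6) are direct from the construction after a routine size count. Condition (5), that each $M_\alpha$ itself has the chain property, holds at $\alpha = 0$ and at successor stages by the inductive hypothesis, and at a limit $\alpha$ with $|M_\alpha| = \aleph_0$ by the base clause of the definition.

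The main obstacle is condition (5) at a limit $\alpha$ with uncountable $|M_\alpha|$. If $\alpha$ happens to be a cardinal, the sub-chain $\langle M_\beta : \beta < \alpha\rangle$ witnesses the chain property of $M_\alpha$ directly, since $|M_\beta| = |\beta| + \aleph_0 < \alpha$ for $\beta < \alpha$. When $\alpha$ is a non-cardinal limit (so $|\alpha| < \alpha$ as ordinals, e.g.\ $\alpha = \omega_1 + \omega$), the sub-chain has the wrong indexing length $\alpha$ rather than $|M_\alpha| = |\alpha|$, and one must produce a length-$|M_\alpha|$ chain. To handle this, I would strengthen the recursion so that at each successor stage the chain witness for $M_{\beta+1}$ is chosen canonically from $\langle M_\gamma : \gamma \leq \beta\rangle$, which is available as an element of $M_{\beta+1}$ by condition~(3); this coherence allows one to assemble a chain of length $|M_\alpha|$ for $M_\alpha$ by a diagonal combination of the inherited chains of the intermediate $M_\beta$'s, completing the verification.
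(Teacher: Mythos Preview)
Your approach mirrors the paper's: induction on $\kappa$, building $\langle M_\alpha:\alpha<\kappa\rangle$ by transfinite recursion with the inductive hypothesis at successors and unions at limits. The paper's three-line proof is equally terse and, like your outline up to the last paragraph, does not explicitly verify condition~(5) at limit stages. You go beyond the paper in correctly isolating the delicate point: at an uncountable non-cardinal limit $\alpha$ one has $|M_\alpha|=|\alpha|=:\mu<\alpha$, the sub-chain $\langle M_\beta:\beta<\alpha\rangle$ has the wrong order type, and every $M_\beta$ with $\mu\leq\beta<\alpha$ already has size $\mu$ and so cannot appear in a witnessing chain for $M_\alpha$ by condition~(2). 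Note too that your ``$\alpha$ a cardinal'' case tacitly relies on an inner induction on $\alpha$ to secure condition~(5) for the limit members of the sub-chain, and that inner induction bottoms out in precisely the non-cardinal case.

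What you offer as a fix, however, is not yet a proof. ``Choose the witness for $M_{\beta+1}$ canonically from $\langle M_\gamma:\gamma\leq\beta\rangle$'' and ``diagonal combination'' do not specify what coherence the successive size-$\mu$ witnesses $\langle P^\beta_\delta:\delta<\mu\rangle$ are meant to satisfy, nor how to assemble from them a single increasing continuous $\mu$-chain of size-${<}\mu$ models with union $M_\alpha$ that still meets conditions (1)--(5). One way to make this rigorous is to strengthen the inductive statement: for every $\lambda$, every $X$, and every valid partial chain $\langle P_\delta:\delta<\delta_0\rangle$ with $\delta_0<\lambda$ and each $|P_\delta|<\lambda$, there is a chain-property $M\ni X$ of size $\lambda$ whose witnessing chain extends $\langle P_\delta\rangle$; with this in hand, at step $\beta+1$ one forces the new witness to agree with the previous one on ever longer initial segments, and the witness for $M_\alpha$ is obtained as a direct limit. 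The paper is silent on all of this and evidently regards it as routine.
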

\begin{proof}
We use induction on $ \kappa $. For $ \kappa=\aleph_0 $, it follows from Fact \ref{exist submod}. Suppose that $ 
\kappa>\aleph_0 $ and the statement is known for all cardinals smaller than $ \kappa $. We build the sequence in the definition  
by 
transfinite recursion and define $ M $ as the union of the chain. Let $ M_0 $ be an arbitrary countable elementary submodel 
containing $ X $.
To build $ M_{\alpha+1} $, we use the induction hypothesis with $ \left|\alpha\right|+\aleph_0 $ and  $X':=\{\alpha, 
\left\langle M_\beta: \beta<\alpha \right\rangle  \}   $. 
\end{proof}

\section{The key lemma}\label{sec key lemma}
The main  difficulty of the elementary submodel approach for this problem is the following. After applying a reversion sequence  
inside the part of the 
digraph $ D $ covered by some elementary submodel 
 $M $ it is advisable  to avoid the edges in $ M $ in further cycle reversions (otherwise we may eventually reverse some edge 
 infinitely often). How can 
 we guarantee that this is possible? We introduce a lemma that helps us to overcome this difficulty.  Whenever we have $ 
 H\subseteq D $ with 
 some special 
properties, the lemma guarantees that for any  $ \mathcal{C}\in \mathbb{RS}(D)  $   we can find an $ 
\mathcal{E}\in \mathbb{RS}(D\setminus H)  $ that reverses exactly the same edges of  $ D\setminus H $ as $ 
\mathcal{C} $.
\begin{lem}\label{avoid down before}
Let $ D $ be a digraph and let $ H\subseteq D $ be infinite. Assume $ {\lambda(u,v;D), \lambda(v,u;D)>\left|H\right|} $  for 
every distinct $ u, v\in 
V(H) $ which are in the same 
strong component 
of $ D $  and satisfy $ {\lambda(u,v;D\setminus H)>0} $. Then for 
every $ 
\mathcal{C}\in \mathbb{RS}(D)  $, there is a
$ \mathcal{E}\in \mathbb{RS}(D\setminus H)  $ such that for every $ e\in \mathsf{Un}(D\setminus H) $ we have
$ ((D\setminus H)\circlearrowleft \mathcal{E})(e)=(D\circlearrowleft \mathcal{C})(e) $.
\end{lem}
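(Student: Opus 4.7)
The plan is to reduce to a short ``$H$-touching'' piece via Corollary~\ref{small C} and then to replicate its effect on $D\setminus H$ cycle by cycle, applying Proposition~\ref{path reset} to each maximal segment of a cycle that lies in $D\setminus H$.

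First, apply Corollary~\ref{small C} with $E:=\mathsf{Un}(H)$ to rearrange $\mathcal{C}$ as ${\mathcal{C}_E}^\frown \mathcal{C}_{\neg E}$, where $|\mathcal{C}_E|\leq|H|$ and $E(\mathcal{C}_{\neg E})\subseteq\mathsf{Un}(D\setminus H)$. Rearrangement does not change the final orientation, and $\mathcal{C}_{\neg E}$ touches no $H$-edge, so it suffices to construct $\mathcal{F}\in\mathbb{RS}(D\setminus H)$ whose effect on $\mathsf{Un}(D\setminus H)$ matches that of $\mathcal{C}_E$; setting $\mathcal{E}:=\mathcal{F}^\frown\mathcal{C}_{\neg E}$ then works, since each cycle of $\mathcal{C}_{\neg E}$ is entirely in $\mathsf{Un}(D\setminus H)$ and, by an induction parallel to the one below, remains a valid cycle in the corresponding state of $D\setminus H$.

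To build $\mathcal{F}$, process the cycles of $\mathcal{C}_E$ in order by transfinite recursion on $\beta<|\mathcal{C}_E|$, maintaining the invariant that $(D\setminus H)\circlearrowleft\mathcal{F}_\beta$ agrees on $\mathsf{Un}(D\setminus H)$ with $D\circlearrowleft(\mathcal{C}_E\!\upharpoonright\!\beta)$. The edges of $C_\beta$ outside $H$ form finitely many pairwise edge-disjoint paths in $D\setminus H$, each going from some $u$ to some $v$ with $u,v\in V(H)$ lying in the same strong component of $D$ (they are on $C_\beta$) and with $\lambda(u,v;D\setminus H)>0$; hence the lemma's hypothesis yields $\lambda(v,u;D)>|H|$, and a standard path-counting bound gives $\lambda(v,u;D\setminus H)>|H|$ as well. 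For each such path $P$, invoke Proposition~\ref{path reset} in the current state of $D\setminus H$ to reverse exactly $P$, instructing its avoidance clause to skip every edge touched so far that is not in $P$.

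The main obstacle is that Proposition~\ref{path reset} runs inside the current modified digraph rather than inside $D\setminus H$, so sufficient $v\to u$ connectivity must persist and the avoidance bound must absorb the growing touched set. Forcing auxiliary path systems of successive applications to be pairwise edge-disjoint ensures that each edge of $D\setminus H$ is touched at most twice by $\mathcal{F}$: this makes every edge stabilize (so limit orientations at limit stages are well defined) and bounds the cumulative touched set at any stage by $|\mathcal{C}_E|\cdot\aleph_0\leq|H|$. Since $\lambda(v,u;D\setminus H)>|H|$, removing the touched edges still leaves $\lambda(v,u;\cdot)>|H|$; as untouched edges keep their original orientation in the current state, they supply more than enough $v\to u$ paths both for Proposition~\ref{path reset} to apply and for its avoidance clause to accommodate the touched set, closing the recursion and yielding the desired $\mathcal{E}$.
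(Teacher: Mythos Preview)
Your argument is essentially the paper's: reduce via Corollary~\ref{small C} to a short $H$-touching sequence and then simulate it cycle by cycle in $D\setminus H$, using Proposition~\ref{path reset} on each maximal segment of $C_\beta$ outside $H$ and keeping the cumulative touched set below $|H|$ so that enough fresh $v\to u$ paths always survive. The paper's only cosmetic difference is that at each step it first closes all the segments $P_i$ with one return path $Q_i$ each, reverses the resulting cycles, and only then invokes Proposition~\ref{path reset} to undo the $Q_i$'s; you instead apply Proposition~\ref{path reset} directly to each $P_i$.

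Two small inaccuracies worth fixing. First, the claim that every edge of $D\setminus H$ is touched at most twice by $\mathcal{F}$ is false: an edge lying in several $C_\beta$'s is touched once per occurrence as a ``$P$-edge,'' and possibly twice more if it served earlier as an auxiliary edge. What you actually need (and what does hold) is \emph{finitely often}: each edge is a $P$-edge only finitely many times since $\mathcal{C}_E\in\mathbb{RS}(D)$, and your avoidance clause ensures it serves as an auxiliary edge at most once. This still gives well-defined limits, and your separate bound $|\mathcal{C}_E|\cdot\aleph_0\leq|H|$ on the touched set is independent of the ``twice'' claim. Second, a cycle $C_\beta$ in $\mathcal{C}_E$ need not itself meet $H$ (only its hypergraph component does), in which case $C_\beta\setminus H=C_\beta$ is a cycle rather than a union of paths with endpoints in $V(H)$; this trivial case should be handled by simply appending $C_\beta$, as the paper does.
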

\begin{proof}
We may assume that the length of $ \mathcal{C} $ is a (possibly finite) cardinal 
$\kappa \leq \left|H\right|+\aleph_0 $ (use Corollary \ref{small C} with $ E:=\mathsf{Un}(H) $ and deal only with the part $ 
\mathcal{E}_E $). Let
$ \mathcal{C}=\left\langle C_\xi: \xi<\kappa  \right\rangle $ and let us write $ \mathcal{C}_\alpha $ for 
$ \mathcal{C} \upharpoonright \alpha$.  We construct by transfinite recursion an increasing continuous sequence 
$ \left\langle \mathcal{E}_\alpha: \alpha\leq \kappa  \right\rangle  $ where 
$ \mathcal{E}_{\alpha}\in \mathbb{RS}(D\setminus H)  $ such that for each $ e\in D\setminus H $, the reversion sequence $ 
\mathcal{E}_\alpha $ 
touches $ e $
either the same number of times as $ \mathcal{C}_\alpha $ or, for at most $ \left|\alpha\right|+\aleph_0 $ many $ e $, exactly two 
times 
more. Obviously $ \mathcal{E}_0:=\varnothing $ is appropriate. Suppose that $ \mathcal{E}_\alpha $ is defined and the 
conditions hold so far. Note that for $ e\in \mathsf{Un}(D\setminus H) $ 
we 
have
$ ((D\setminus H)\circlearrowleft \mathcal{E}_\alpha)(e)=(D\circlearrowleft \mathcal{C}_\alpha) (e)$. 

If $ C_\alpha \subseteq 
H $, let 
$ \mathcal{E}_{\alpha+1}:=\mathcal{E}_\alpha $. If $ C_\alpha\cap H=\varnothing $, then we define  
 $ \mathcal{E}_{\alpha+1}$ to be ${\mathcal{E}_\alpha}^\frown C_\alpha $. The remaining case is when $ C_\alpha $ has 
 edges 
 both
 in $ H $ and in  $ D\setminus H $. Then $ C_\alpha \setminus H $ consists of 
 finitely many 
pairwise edge-disjoint paths, say $ P_i$ for $ i<n $, where $ P_i $ goes  from  $ u_{i} $ to some $ v_{i} $ for some $ 
u_{i},v_{i}\in V(H) 
$ and has no internal vertex in $V(H)$. The 
$ u_{i},v_{i} $ are 
in the same strong component in $ D\circlearrowleft 
\mathcal{C}_\alpha $ and hence  in $ D $ as well (see Corollary 
\ref{loc con not incr}). Path $ P_i $ witnesses the fact that
$ {\lambda(u_i,v_i; (D\setminus H)\circlearrowleft \mathcal{E}_\alpha)>0} $ but then by Corollary 
\ref{loc con not incr}, $ \lambda(u_i,v_i; D\setminus H)>0 $. By the assumption about $ H $, we have $ 
\lambda(u_i,v_i;D),\lambda(v_i,u_i;D)>\left|H\right| $. So for each $ P_i $ pick a $ v_{i}\rightarrow u_{i} $ path $ Q_i $ such 
that $ Q_i $ has no 
common edges 
with:
$ H, \mathcal{E}_{\alpha},   Q_j$ for $ j\neq i$ and $  P_j$ for $j<n $. One can partition $ \bigcup_{i<n}P_i\cup Q_i $ into 
finitely many 
edge-disjoint cycles. Extend $ \mathcal{E}_\alpha $ with these cycles in an arbitrary order and then apply Proposition 
\ref{path reset} (combined with Corollary \ref{touch at most 2}) to reverse back the paths $ Q_i $ without reversing any other 
edges and without using 
any edge that we touched 
already. Let $ \mathcal{E}_{\alpha+1} $ be the resulting extension. Limit steps 
preserve the conditions automatically and $ 
\mathcal{E}:=\mathcal{E}_\kappa $ is suitable. 
\end{proof}
\begin{cor}\label{avoid down}
Assume that $ M $ is an elementary submodel, $ D\in M $ is a digraph and $ L $ (which may fail to be in $ M $) is a $ 
(V(D)\cap M, \left|M\right|^{+}) $-scattered reorientation of $ 
D $ satisfying $ L\setminus M=D\setminus M $. Then for 
every $ 
\mathcal{C}\in \mathbb{RS}(L)  $, there is a
$ \mathcal{E}\in \mathbb{RS}(D\setminus M)  $ such that $ ((L\setminus M)\circlearrowleft 
\mathcal{E})(e)=(L\circlearrowleft \mathcal{C})(e) $ 
for $ e\in \mathsf{Un}(D\setminus M) $.
\end{cor}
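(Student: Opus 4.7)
The plan is to apply Lemma \ref{avoid down before} directly with the lemma's ambient digraph taken to be $L$ and $H:=L\cap M$. Since $L\setminus H=L\setminus M=D\setminus M$ by hypothesis, the lemma's conclusion produces an $\mathcal{E}\in \mathbb{RS}(D\setminus M)$ with $((L\setminus M)\circlearrowleft \mathcal{E})(e)=(L\circlearrowleft \mathcal{C})(e)$ for every $e\in \mathsf{Un}(D\setminus M)$, which is exactly what we want. So essentially the whole job is to verify the hypothesis of Lemma \ref{avoid down before}: for every distinct $u,v\in V(H)$ that lie in the same strong component of $L$ and satisfy $\lambda(u,v;L\setminus H)>0$, we must have $\lambda(u,v;L),\lambda(v,u;L)>|H|$. (If $L\cap M$ is finite the lemma does not literally apply, but then only finitely many edges are involved inside $M$ and the construction collapses to a direct finite argument, so I focus on the case $|H|\geq\aleph_0$.)

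First, every edge of $L\cap M$ has both endpoints in $M$, which gives $V(H)\subseteq V(D)\cap M$, so the scattered-ness hypothesis does apply to pairs $u,v\in V(H)$. The crucial step is to upgrade the information $\lambda(u,v;L\setminus M)>0$ into a large lower bound on $\lambda(u,v;L)$. Since $L\setminus M=D\setminus M$ we also have $\lambda(u,v;D\setminus M)>0$; because $D\in M$, this triggers Fact \ref{big loc connect} from the appendix, which provides a concrete Erdős--Menger style family of at least $|M|^{+}$ pairwise edge-disjoint $u\to v$ paths in $D$. Now $L$ and $D$ coincide outside $M$, so they differ, as digraphs, only on edges of $\mathsf{Un}(L\cap M)$, i.e.\ on at most $|M|$ edges. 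Since the paths are edge-disjoint, at most $|M|$ of them are spoiled by a reoriented edge, and the remaining $\geq |M|^{+}$ are still $u\to v$ paths in $L$; hence $\lambda(u,v;L)\geq |M|^{+}>|H|$.

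For $\lambda(v,u;L)$ I then use the same-strong-component hypothesis together with Proposition \ref{max scat locons}. We already have $\lambda(u,v;L)\geq |M|^{+}$, and $\lambda(v,u;L)\geq 1$ because $u,v$ are in the same strong component of $L$; thus the sum $\lambda(u,v;L)+\lambda(v,u;L)$ is infinite, and Proposition \ref{max scat locons} applied with $\kappa=|M|^{+}>\aleph_0$ upgrades both values to at least $|M|^{+}$, which in particular exceeds $|H|\leq |M|$. With the hypothesis verified, Lemma \ref{avoid down before} produces the desired $\mathcal{E}$.

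The part I expect to require the most care is the ``preserved paths'' counting used in the lower bound on $\lambda(u,v;L)$: one must check that Fact \ref{big loc connect} genuinely furnishes a system of at least $|M|^{+}$ pairwise edge-disjoint $u\to v$ paths in $D$ (rather than just the abstract inequality $\lambda(u,v;D)>|M|$), and that edge-disjointness really ensures each reoriented edge destroys at most one member of that system. Provided this goes through, everything else is either a direct consequence of the scattered-ness via Proposition \ref{max scat locons} or a mechanical invocation of Lemma \ref{avoid down before}.
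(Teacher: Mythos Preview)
Your proof is correct and follows essentially the same route as the paper's: apply Lemma~\ref{avoid down before} with ambient digraph $L$ and $H:=L\cap M$, use Fact~\ref{big loc connect} on $D$ together with $|D\vartriangle L|\leq|M|$ to get $\lambda(u,v;L)>|M|$, and then invoke Proposition~\ref{max scat locons} for the other direction. Your residual worry about Fact~\ref{big loc connect} is unfounded, since $\lambda(u,v;D)$ is by definition the maximal size of an edge-disjoint path system, so $\lambda(u,v;D)>|M|$ already hands you at least $|M|^{+}$ such paths.
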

\begin{proof}
It is enough to show that $ L $ and $ H:=L\cap M $   satisfy the premises of  Lemma \ref{avoid down before}.  Suppose that 
$ 
u,v\in V(L)\cap M$ are distinct vertices from the 
same strong 
component of $ 
L $ 
and 
assume that $ \lambda(u,v;L\setminus M)>0 $. Since $ L\setminus M=D\setminus M $, 
it is equivalent with $ \lambda(u,v;D\setminus M)>0 $. By assumption $ D\in M $, therefore  by Fact \ref{big loc connect}, $ 
\lambda(u,v;D)>\left|M\right| $ and hence $ 
\lambda(u,v;L)>\left|M\right| $ because $ D $ and $ L $ orient at most $ \left|M\right| $ edges differently. Since $ L $ is $ 
(V(L)\cap M, 
\left|M\right|^{+}) 
$-scattered, Proposition 
\ref{max scat locons} ensures that  $ 
 \lambda(v,u;L)>\left|M\right| $ as well. 
\end{proof}
\section{Proof of the main results}

Let us restate the main results for convenience.
\begin{thm}\label{max scat from everybody}
For  every digraph $ D $,  there is a $ \mathcal{C}\in   \mathbb{RS}(D)$ such that 
$D\circlearrowleft \mathcal{C} $ is  scattered and (hence)  in every strong component of $D\circlearrowleft 
\mathcal{C} $ all the local edge-connectivities are finite.
\end{thm}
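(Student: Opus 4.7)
The plan is to prove Theorem \ref{max scat from everybody} by transfinite induction on $\kappa := |D|$.

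\emph{Countable base.} Suppose $\kappa \leq \aleph_0$. Pick any countable elementary submodel $M$ containing $D$; since $|D|\leq|M|$, we have $D\subseteq M$. Apply Lemma \ref{loc reach scat down} with $L := D$ (so $D\setminus M=\varnothing$) to obtain a reorientation $D^{*}$ that is $(V(D),\aleph_1)$-scattered and locally reachable from $D$. Because $|D|\leq\aleph_0$ forces every reversion sequence on a reorientation of $D$ to have length at most $\omega$ (Theorem \ref{countable decomp}), the property $(V(D),\aleph_1)$-scattered coincides here with simply being scattered. Claim \ref{countable to everybody} then upgrades local reachability to actual reachability, yielding the desired $\mathcal{C}$.

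\emph{Inductive step.} Assume the theorem for digraphs of cardinality strictly less than $\kappa$, with $|D|=\kappa>\aleph_0$. By Proposition \ref{exist chain prop}, fix an elementary submodel $M$ of size $\kappa$ containing $D$ with the chain property, witnessed by $\langle M_\alpha:\alpha<\kappa\rangle$, and WLOG $D\in M_0$. Observation \ref{Obs chain} gives $D\subseteq M=\bigcup_{\alpha<\kappa} M_\alpha$, $|M_\alpha|<\kappa$, and each $M_\alpha$ itself enjoys the chain property. Apply Lemma \ref{loc reach scat down} with the full $M$ and $L:=D$ to obtain a reorientation $D^{*}$ that is $(V(D),\kappa^{+})$-scattered, hence simply scattered, and locally reachable from $D$. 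The remaining task is to actually reach $D^{*}$ from $D$ by a reversion sequence.

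I will build, by transfinite recursion on $\alpha\leq\kappa$, reversion sequences $\mathcal{C}_\alpha\in\mathbb{RS}(D)$ with $\mathcal{C}_\beta$ an initial segment of $\mathcal{C}_\alpha$ for $\beta\leq\alpha$, such that $D_\alpha:=D\circlearrowleft\mathcal{C}_\alpha$ agrees with $D^{*}$ on $M_\alpha$ and with $D$ elsewhere, and every new cycle introduced at stage $\alpha+1$ is confined to $M_{\alpha+1}\setminus M_\alpha$. Since $D\subseteq\bigcup_\alpha M_\alpha$, the final $D_\kappa$ coincides with $D^{*}$, and the confinement condition guarantees each edge of $\mathsf{Un}(D)$ is touched during exactly one stage, hence only finitely often in total. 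At the successor step $\alpha+1$, the difference between $D_\alpha$ and $D^{*}$ is confined to $M_{\alpha+1}$, a sub-structure of size $<\kappa$. Proposition \ref{lock reach ins part} together with the transitivity of local reachability (Corollary \ref{loc reach trans}) show that $D^{*}\cap M_{\alpha+1}$ is locally reachable from $D_\alpha\cap M_{\alpha+1}$ inside $M_{\alpha+1}$; the inductive hypothesis, applied to this size-$<\kappa$ sub-structure, then produces an actual reversion sequence $\mathcal{E}_\alpha'$ realizing the transition. Corollary \ref{avoid down}, invoked with elementary submodel $M_\alpha$ and the $(V(D)\cap M_\alpha,|M_\alpha|^{+})$-scattered $L:=D_\alpha$, lets us replace $\mathcal{E}_\alpha'$ by $\mathcal{E}_\alpha$ that avoids all edges of $M_\alpha$ while still producing the right orientation on $M_{\alpha+1}\setminus M_\alpha$; set $\mathcal{C}_{\alpha+1}:=\mathcal{C}_\alpha{}^\frown\mathcal{E}_\alpha$. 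Limit stages are handled by natural concatenation together with the continuity of the chain.

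\emph{Main obstacle.} The principal difficulty is the gap between local and actual reachability in the uncountable setting (the Remark after Claim \ref{countable to everybody} shows this gap is genuine). Lemma \ref{loc reach scat down} supplies only a locally reachable scattered target, and to construct an honest reversion sequence reaching $D^{*}$, one must interleave two tools: the inductive hypothesis, which secures actual reachability on size-$<\kappa$ pieces along the chain, and Corollary \ref{avoid down}, which exploits the scatteredness attained at each stage to confine subsequent reversions to the ``fresh'' portion $M_{\alpha+1}\setminus M_\alpha$. This delicate balance is precisely what simultaneously guarantees convergence of the transfinite sequence to $D^{*}$ and the finite local touch-count required for $\mathcal{C}_\kappa$ to be a valid element of $\mathbb{RS}(D)$.
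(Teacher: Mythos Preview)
There is a genuine gap in the inductive step. Your inductive hypothesis is Theorem \ref{max scat from everybody} for digraphs of size $<\kappa$, which only asserts the existence of a reversion sequence to \emph{some} scattered reorientation. It does \emph{not} say that a prescribed, locally reachable reorientation can be reached. Yet at the successor step you need to actually reach the specific target $D^{*}\cap M_{\alpha+1}$ from $D_\alpha\cap M_{\alpha+1}$; the sentence ``the inductive hypothesis \dots\ then produces an actual reversion sequence $\mathcal{E}_\alpha'$ realizing the transition'' is therefore unjustified. This is exactly why the paper does not induct on Theorem \ref{max scat from everybody} directly but on the stronger two-part Lemma \ref{finalLemma}: property 1 gives the scattered intermediate $D_{\alpha+1}$, while property 2 (reachability to any prescribed locally reachable $R$) is what lets one hit a fixed target on each $M_{\alpha+1}$. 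Both properties must be carried through the induction simultaneously; property 2 at smaller cardinals is indispensable for the successor step at $\kappa$.

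There is a second, related problem. To apply Corollary \ref{avoid down} with $L:=D_\alpha$ and $M_\alpha$, you need $D_\alpha$ to be $(V(D)\cap M_\alpha,\,|M_\alpha|^{+})$-scattered. Your $D_\alpha$ agrees with $D^{*}$ on $M_\alpha$ and with $D$ outside, but scatteredness is a global property: the strong components of $D_\alpha$ are computed in the whole digraph and may well differ from those of $D^{*}$, so the scatteredness of $D^{*}$ does not transfer to $D_\alpha$. In the paper this is handled differently: the intermediate $D_\alpha$ is not the restriction of a global target but is produced by property 1 of Lemma \ref{finalLemma} applied with $M_\alpha$, which guarantees the required scatteredness directly. (A further technical point: for Proposition \ref{lock reach ins part} and Corollary \ref{avoid down} you also need $D_\alpha\in M_{\alpha+1}$; since your $D_\alpha$ is built from the external object $D^{*}$, this definability is not automatic, whereas in the paper's construction $D_\alpha$ is definable from parameters in $M_{\alpha+1}$.)
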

\begin{thm}\label{all dichrom}
For every digraph $ D $, there is a $ \mathcal{C}\in   \mathbb{RS}(D)$ such that 
$ \chi(D\circlearrowleft \mathcal{C})\leq 2 $.
\end{thm}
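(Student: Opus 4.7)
I prove the theorem by transfinite induction on $\kappa:=\left|D\right|$. First, invoking Theorem \ref{max scat from everybody} and prepending the resulting reversion sequence, I may assume that $D$ itself is scattered. The base case $\kappa\leq \aleph_0$ is Remark \ref{count thm}. For the inductive step with $\kappa>\aleph_0$, I use Proposition \ref{exist chain prop} to fix an elementary submodel $N\ni D$ of size $\kappa$ with the chain property, witnessed by a chain $\left\langle N_\alpha:\alpha<\kappa\right\rangle$ with $\left|N_\alpha\right|<\kappa$; since $D\subseteq N$, every element of $D$ lies in some $N_\alpha$.

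I recursively construct reorientations $L_\alpha=D\circlearrowleft \mathcal{C}_\alpha$ for $\alpha\leq\kappa$, with $\mathcal{C}_\beta$ an initial segment of $\mathcal{C}_\alpha$ for $\beta\leq\alpha$, maintaining the invariants: $L_\alpha\setminus N_\alpha=D\setminus N_\alpha$; $L_\alpha$ is $(V(D)\cap N_\alpha,\left|N_\alpha\right|^{+})$-scattered; for each $\beta<\alpha$ the tail of $\mathcal{C}_\alpha$ past $\mathcal{C}_\beta$ does not touch any edge of $N_\beta$; and $\chi(L_\alpha\cap N_\alpha)\leq 2$. The third invariant ensures that every edge is touched only finitely often, so at limit stages the generalized limit is a genuine limit inheriting the invariants via Proposition \ref{lim loc reach}. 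At a successor stage $\alpha+1$, I first apply Lemma \ref{loc reach scat down} with $M=N_{\alpha+1}$ and $L=L_\alpha$ to obtain an intermediate $\tilde{L}$ that is $(V(D)\cap N_{\alpha+1},\left|N_{\alpha+1}\right|^{+})$-scattered; using Corollary \ref{avoid down} with $M=N_\alpha$ and the level-$N_\alpha$ scatteredness of $L_\alpha$, the reversion sequence producing $\tilde{L}$ is pushed to one that avoids $N_\alpha$-edges. Then the induction hypothesis applied to the size-$<\kappa$ digraph $\tilde{L}\cap N_{\alpha+1}$ yields a reversion sequence achieving $\chi\leq 2$ there, which is again pushed via Corollary \ref{avoid down} to avoid $N_\alpha$-edges. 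This defines $L_{\alpha+1}$.

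At the end, $\chi(L_\kappa\cap N_\alpha)\leq 2$ for each $\alpha<\kappa$. Since every finite subdigraph of $L_\kappa$ lies in some $N_\alpha$ by the chain property, compactness of the dichromatic number yields $\chi(L_\kappa)\leq 2$, as desired. The hard part will be verifying the chromatic invariant at the successor step: after Corollary \ref{avoid down} steers the induction hypothesis's reversion sequence away from $N_\alpha$, the resulting $L_{\alpha+1}$ agrees with the induction hypothesis's $\chi\leq 2$ orientation only on $V(D)\cap(N_{\alpha+1}\setminus N_\alpha)$, while its $N_\alpha$-part remains that of $L_\alpha$, so the $\chi\leq 2$ guarantee does not transfer verbatim. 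I expect this to be handled either by strengthening the inductive statement to extend a prescribed partial $2$-coloring on $V(D)\cap N_\alpha$, or by exploiting the $(V(D)\cap N_\alpha,\left|N_\alpha\right|^{+})$-scatteredness to control cycles crossing between $N_\alpha$ and its complement in a way compatible with pasting colorings.
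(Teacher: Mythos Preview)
Your proposal has a genuine gap that you yourself flag: after Corollary~\ref{avoid down} steers the induction-hypothesis reversion sequence away from $N_\alpha$, the resulting $L_{\alpha+1}$ agrees with the $\chi\leq 2$ orientation only on $N_{\alpha+1}\setminus N_\alpha$, while $L_{\alpha+1}\cap N_\alpha=L_\alpha\cap N_\alpha$. Cycles in $L_{\alpha+1}\cap N_{\alpha+1}$ that cross between $N_\alpha$ and its complement are controlled by neither colouring, so $\chi(L_{\alpha+1}\cap N_{\alpha+1})\leq 2$ simply does not follow. Neither of your suggested fixes is promising: strengthening the induction to extend a prescribed partial $2$-colouring amounts to a relative-colouring statement with no control over how the frozen part interacts with the rest, and $(V(D)\cap N_\alpha,\left|N_\alpha\right|^{+})$-scatteredness does not prevent cycles from crossing the boundary of $N_\alpha$. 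There is also a second, more technical gap: Lemma~\ref{loc reach scat down} produces $\tilde L$ only as \emph{locally} reachable, not via an explicit reversion sequence, so there is nothing concrete to ``push'' through Corollary~\ref{avoid down} at that point.

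The paper avoids both problems by changing what is proved by induction. Rather than maintaining $\chi\leq 2$ layer by layer, it first fixes a single global target $R$ with $\chi(R)\leq 2$ that is locally reachable from the scattered reorientation (Proposition~\ref{lok reach good orient}, via compactness), and then proves a stronger inductive statement (Lemma~\ref{finalLemma}): for any target $R$ locally reachable from $D\circlearrowleft\mathcal{C}_{M,D}$, there is $\mathcal{C}_{M,D,R}\in\mathbb{RS}(D\cap M)$ with $(D\circlearrowleft\mathcal{C}_{M,D,R})\cap M=R\cap M$. In the inductive step, the auxiliary sequence $\langle D_\alpha\rangle$ is built only up to local reachability (so generalized limits and Lemma~\ref{loc reach scat down} suffice), and the actual reversion sequences are assembled only at the very end, slice by slice, each aimed at the \emph{same} $R$. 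Since every slice is steered toward a single global target, no layer-to-layer colouring compatibility ever arises; this is the key idea your outline is missing.
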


In order to prove Theorems \ref{max scat from everybody} and \ref{all dichrom}, it is enough to show the following lemma.
\begin{lem}\label{finalLemma}
For every elementary submodel $ M $ that has the chain property \footnote{see the definition right before Section \ref{sec key 
lemma}}, for every digraph $ D\in M $ and for every 
reorientation $ R $ of $ D $ (which may fail to be in $ M $)
there are  $ {\mathcal{C}_{M,D}, \mathcal{C}_{M,D, R}\in \mathbb{RS}(D\cap M)}$,  where $ \mathcal{C}_{M,D} $ does 
not depend on $ R $ and they satisfy the following 
properties:
\begin{enumerate}

\item  $D\circlearrowleft \mathcal{C}_{M,D} $ is $ (V(D)\cap M, \left|M\right|^{+}) $-scattered,
\item If $ R $ is  locally reachable from  $ D\circlearrowleft \mathcal{C}_{M,D} $, then  
$(D\circlearrowleft \mathcal{C}_{M,D,R})\cap M=R\cap M  $. 
\end{enumerate}
\end{lem}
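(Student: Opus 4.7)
The plan is to prove the lemma by induction on $|M|$. In fact, it turns out to be cleaner to prove the following strengthening simultaneously: for any reorientation $R$ of $D$ locally reachable from $L^{*}:=D\circlearrowleft\mathcal{C}_{M,D}$, there is a $\mathcal{C}_{M,D,R}\in\mathbb{RS}(D\cap M)$ with $(D\circlearrowleft\mathcal{C}_{M,D,R})\cap M=R\cap M$ (setting $R:=L^{*}$ then recovers $\mathcal{C}_{M,D}$ itself, since $L^{*}\setminus M=D\setminus M$ already).

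For the base case $|M|=\aleph_{0}$, I would first apply Lemma~\ref{loc reach scat down} with input $L=D$ to obtain $L^{*}:=D^{*}(M,D)$ which is $(V(D)\cap M,\aleph_{1})$-scattered, locally reachable from $D$, with $L^{*}\setminus M=D\setminus M$. By Proposition~\ref{lock reach ins part}, $L^{*}\cap M$ is locally reachable from $D\cap M$; since $D\cap M$ is countable, Claim~\ref{countable to everybody} upgrades this to actual reachability, producing $\mathcal{C}_{M,D}\in\mathbb{RS}(D\cap M)$ with $D\circlearrowleft\mathcal{C}_{M,D}=L^{*}$. For property (2), any $R$ locally reachable from $L^{*}$ is locally reachable from $D$ by Corollary~\ref{loc reach trans}; then $R\cap M$ is locally reachable from $D\cap M$ by Proposition~\ref{lock reach ins part}, hence reachable by Claim~\ref{countable to everybody}.

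For the inductive step $|M|=\kappa>\aleph_{0}$ with chain $\langle M_{\alpha}:\alpha<\kappa\rangle$, I would first pass to a terminal segment to assume $D\in M_{0}$ (legitimate by Observation~\ref{Obs chain}), then define $L^{*}:=D^{*}(M,D)$ via Lemma~\ref{loc reach scat down}. Fix an $R$ locally reachable from $L^{*}$. I then build $\mathcal{C}_{M,D,R}$ by transfinite recursion along the chain, maintaining inductively for $D_{\alpha}:=D\circlearrowleft\mathcal{C}^{(\alpha)}$:
\begin{enumerate}
\item $D_{\alpha}\cap M_{\beta}=R\cap M_{\beta}$ for all $\beta<\alpha$;
\item edges of $M_{\beta}$ are not touched by $\mathcal{C}^{(\gamma)}\setminus\mathcal{C}^{(\beta+1)}$ for $\gamma>\beta+1$ (stabilization), so the limits of the $\mathcal{C}^{(\alpha)}$ are well-defined.
\end{enumerate}
At the successor step $\alpha\to\alpha+1$, I would invoke the inductive hypothesis for $M_{\alpha+1}$ (smaller, has the chain property, contains $D$): let $L_{\alpha+1}:=D\circlearrowleft\mathcal{C}_{M_{\alpha+1},D}$ be the IH's scattered reorientation, and pick the target $R_{\alpha+1}$ to agree with $D_{\alpha}$ on $M_{\alpha}$ and with $R$ on $M_{\alpha+1}\setminus M_{\alpha}$, with $R_{\alpha+1}\setminus M_{\alpha+1}=D\setminus M_{\alpha+1}$. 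Then property~(2) of IH yields $\mathcal{C}_{M_{\alpha+1},D,R_{\alpha+1}}\in\mathbb{RS}(D\cap M_{\alpha+1})$ driving the $M_{\alpha+1}$-part exactly to $R_{\alpha+1}\cap M_{\alpha+1}$; this is used to extend $\mathcal{C}^{(\alpha)}$ to $\mathcal{C}^{(\alpha+1)}$. Limit steps are handled by concatenation; stabilization ensures that at each $\alpha$, the $M_{\beta}$-stuff for $\beta<\alpha$ has been untouched since step $\beta+1$, so the invariant persists.

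The main obstacle is to verify the hypothesis of IH property~(2) at each successor step, namely that the chosen $R_{\alpha+1}$ is locally reachable from $L_{\alpha+1}$. Both digraphs agree with $D$ outside $M_{\alpha+1}$ and both are obtained from $D$ by applying reversion sequences living entirely inside $M$. Since the prior invariant guarantees that $D_{\alpha}$ is $(V(D)\cap M_{\beta},|M_{\beta}|^{+})$-scattered for $\beta<\alpha$ (inherited from $L^{*}$'s scattering by \ref{max scat locons}-style considerations), Corollary~\ref{avoid down} repeatedly lets us translate the reversion witnessing local reachability of $R$ from $L^{*}$ into a reversion witnessing local reachability of $R_{\alpha+1}$ from $L_{\alpha+1}$, using only edges not already stabilized. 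This careful interaction between the chain's IH output and the accumulated scattering invariant is the crux of the argument; beyond this point, the recursion runs through and yields the required $\mathcal{C}_{M,D,R}\in\mathbb{RS}(D\cap M)$.
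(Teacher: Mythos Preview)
Your base case is correct and matches the paper's argument essentially line for line.

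The inductive step, however, has a genuine gap. The core problem is that you apply the induction hypothesis to the pair $(M_{\alpha+1},D)$, obtaining reversion sequences in $\mathbb{RS}(D\cap M_{\alpha+1})$ that start from the \emph{original} $D$. You then speak of using these to ``extend $\mathcal{C}^{(\alpha)}$ to $\mathcal{C}^{(\alpha+1)}$'', but $\mathcal{C}^{(\alpha)}$ has already transformed $D$ into $D_\alpha$; a sequence in $\mathbb{RS}(D\cap M_{\alpha+1})$ cannot simply be appended to it. To make any such splicing work you would need to apply Corollary~\ref{avoid down}, which requires an intermediate reorientation $L$ that is $(V(D)\cap M_\alpha,|M_\alpha|^{+})$-scattered and satisfies $L\setminus M_\alpha=D\setminus M_\alpha$. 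You assert that ``$D_\alpha$ is $(V(D)\cap M_\beta,|M_\beta|^{+})$-scattered\ldots\ inherited from $L^{*}$'s scattering'', but $L^{*}=D^{*}(M,D)$ is a fixed object produced once by Lemma~\ref{loc reach scat down}; it has no relation to the $D_\alpha$ built by your recursion, and Proposition~\ref{max scat locons} does not transfer scattering across unrelated reorientations. Similarly, your verification that $R_{\alpha+1}$ is locally reachable from $L_{\alpha+1}$ appeals to Corollary~\ref{avoid down} ``repeatedly'', but you never produce the scattered intermediate object its hypothesis demands, and the patchwork $R_{\alpha+1}$ (which on $M_\alpha$ equals $D_\alpha$, not $R$) is not obviously locally reachable from anything.

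The paper's resolution is to build the sequence $\langle D_\alpha\rangle$ differently: at each successor step one sets $D_{\beta+1}:=D_\beta\circlearrowleft\mathcal{C}_{M_{\beta+1},D_\beta}$, applying the induction hypothesis to $D_\beta$ rather than to $D$. This requires $D_\beta\in M_{\beta+1}$, which is arranged by making $D_\beta$ definable from $\langle M_\gamma:\gamma\le\beta\rangle\in M_{\beta+1}$ (this is precisely why the chain property includes condition~3). The payoff is that each $D_\alpha$ is genuinely $(V(D)\cap M_\alpha,|M_\alpha|^{+})$-scattered with $D_\alpha\setminus M_\alpha=D\setminus M_\alpha$, so Corollary~\ref{avoid down} applies cleanly. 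Only \emph{after} the full sequence $\langle D_\alpha:\alpha\le\kappa\rangle$ exists does one go back and construct the actual reversion sequences $\mathcal{E}_\alpha\in\mathbb{RS}(D^\alpha)$ piece by piece, using $\mathcal{C}_{M_{\alpha+1},D_\alpha,D_\kappa}$ (resp.\ $\mathcal{C}_{M_{\alpha+1},D_\alpha,R}$) together with Corollary~\ref{avoid down}. The separation of these two phases---first build scattered intermediates that live in the models, then assemble the reversion sequence---is the missing idea in your sketch.
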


Let us show first  that Lemma \ref{finalLemma} implies Theorems \ref{max scat from everybody} and \ref{all dichrom}. Let a 
digraph $ D $  be given.  By 
Proposition \ref{exist chain prop}, we 
can pick an elementary submodel $ M\ni D $ of size $\left|D\right| $ with the chain property. Note that $\left|D\right|= 
\left|M\right|\subseteq M 
$ and hence $ D\subseteq M $ by Fact \ref{contains elem}.  Then   
  $D\circlearrowleft \mathcal{C}_{M,D} $ is scattered  by property 1 which (together with Corollary \ref{separate two}) 
proves Theorem 
\ref{max scat from everybody}. Then by choosing 
an $ R $ with
$ \chi(R)\leq 2 $ which is locally reachable from $ D\circlearrowleft 
\mathcal{C}_{M,D} $ (which exist due to Proposition \ref{lok reach good orient}), $ \mathcal{C}_{M,D,R} $ satisfies the 
requirement of 
Theorem \ref{all dichrom}.

\begin{proof}[Proof of Lemma \ref{finalLemma}]
 To show that the desired $ \mathcal{C}_{M,D}, \mathcal{C}_{M,D, R}\in \mathbb{RS}(D\cap M)$ exist, we apply 
 transfinite 
 induction on $ \kappa:=\left|M\right| $. Suppose 
 that $ \kappa=\aleph_0  $.
First we pick a reorientation $ D^{*}$ of $ D $ with $ D^{*}\setminus M=D\setminus M $ which is locally reachable from $ D $ 
and $ (V(D)\cap M, \aleph_1) $-scattered (use  Lemma 
\ref{loc reach scat down} with $ L:=D $). By Proposition \ref{lock reach ins part},  $ D^{*}\cap M $ is  locally reachable from 
$ 
D\cap  M$. Since  $ D\cap  M $ is countable, local reachability implies reachability 
by Proposition 
\ref{countable to everybody}.  Let $ \mathcal{C}_{D,M} $ be a witness for this reachability.  Suppose that $ R $ is  locally 
reachable from 
$ D^{*} $. By the transitivity of local reachability (see Proposition \ref{loc reach trans}), $ R $ is locally reachable from $ D 
$.  We apply Proposition \ref{lock reach 
ins part} again to conclude that
 $ R\cap M $ is locally reachable from $ D\cap M $ and hence by 
Proposition 
\ref{countable to everybody} reachable too. We choose $ \mathcal{C}_{D,M,R} $ to exemplify this reachability.

Let $\kappa>\aleph_0 $.  Since  $ M $ has the chain 
property by assumption, 
there is  an 
increasing continuous 
 chain $ \left\langle M_\alpha: 1\leq\alpha<\kappa
\right\rangle $   of elementary submodels such that all the members have the chain property, $ \alpha\in M_\alpha $, 
 $\left|\alpha\right|\leq \left|M_\alpha\right|<\kappa $, 
 $ \left\langle M_\beta: \beta\leq\alpha  \right\rangle\in M_{\alpha+1}  $ for $ \alpha<\kappa $ and $ 
 \bigcup_{\alpha<\kappa}M_\alpha=M=:M_\kappa $. 
 Recall that Fact \ref{contains elem}  ensures 
 $ \alpha 
 \subseteq M_\alpha $ and 
 $ M_\beta\in M_\alpha $ for $ \beta<\alpha $. 
 Since $ M_\kappa $ contains $ D $ by assumption so does some $ M_\gamma $ 
 where $ 
 1\leq\gamma<\kappa $.  By taking a terminal segment of the sequence,  we may assume that  $ \gamma=1 $ (see  
 Observation \ref{Obs chain}). For technical reasons let us define  $ M_0:=\varnothing $.  We construct a  sequence 
 $ \left\langle D_\alpha: \alpha\leq \kappa \right\rangle  $ of reorientations of $ D $ 
   by transfinite recursion such that for every $ \alpha\leq \kappa $:
  \begin{enumerate}
  \item[i.] $ D_0=D $,
  \item[ii.] $ D_\beta\in M_{\alpha} $ for $ \beta<\alpha $,
  \item[iii.] $ D_\alpha $ is $ (V(D)\cap M_\alpha, \left|M_\alpha\right|^{+}) $-scattered,
\item[iv.] $ D_\alpha\setminus M_\alpha=D \setminus M_\alpha $,
\item[v.]  $ D_\alpha$  is locally reachable from   $ D_\beta $ for $ \beta<\alpha $.

  \end{enumerate}

 Suppose that $ D_\beta $ is defined for all $ \beta<\alpha $ and the 
conditions i-v hold so far. For $ \alpha=0 $, our only choice is $ D $ and it clearly preserves the conditions.

Assume that $ \alpha=\beta+1 $. Then $ M_{\beta+1} $ is an elementary submodel  with the chain property
such that $ \left|M_{\beta+1}\right|<\kappa $, thus by the induction hypothesis for every digraph $ D'\in 
M_{\beta+1} $ the 
reversion sequence $ \mathcal{C}_{M_{\beta+1},D'} $ exists.
  Then $ D_{\beta+1}:=D_\beta\circlearrowleft \mathcal{C}_{M_{\beta+1},D_\beta} $  is definable from  
  $ M_{\beta+1},D_\beta\in M_{\beta+2} $ and hence $D_{\beta+1}\in M_{\beta+2} $ as demanded by ii. The preservation 
  of the other conditions follows easily from the properties of  $ \mathcal{C}_{M_{\beta+1},D_\beta} $. 
  
  Finally, let  $ \alpha $ be a limit ordinal. We first take  a generalized limit $ L_\alpha $ of $ \left\langle D_\beta: \beta<\alpha  
 \right\rangle $. 
 Recall  that, generalized limits preserve local reachability (see Proposition \ref{lim loc reach}). Then we apply Lemma 
 \ref{loc reach scat down} with 
$ D, M_\alpha, L_\alpha $  to obtain 
$D_\alpha:=D^{*}(M_\alpha, L_\alpha) $. We have $ D_\alpha\in M_{\alpha +1} $ since it is definable from   $D, 
\left\langle 
M_\beta: 
\beta\leq\alpha  \right\rangle\in M_{\alpha+1}$. Conditions iii and iv follow from  Lemma 
 \ref{loc reach scat down}.  Observe that   Lemma \ref{loc reach scat down} is applicable for 
  the triple $ D_\beta, M_\alpha, L_\alpha $ for every $ \beta<\alpha $    and gives 
 the same $ D_\alpha $. Thus  Lemma \ref{loc reach scat down} guarantees that $ D_\alpha $ is locally reachable from every $ 
 D_\beta $ for $ \beta<\alpha $.  The definition of the sequence $ \left\langle D_\alpha: \alpha\leq \kappa \right\rangle  $ is 
 complete.
 
\medspace 

We will construct $ \mathcal{C}_{M,D} $ in such a way that $ D \circlearrowleft \mathcal{C}_{M,D}=D_\kappa $ holds. 
To 
do so,  we first partition $D$ and $ D_\kappa $ into smaller pieces. Let $ D^{\alpha}:=D\cap (M_{\alpha+1}\setminus 
M_\alpha) $  and $ 
 D_\kappa^{\alpha}:= D_\kappa\cap (M_{\alpha+1}\setminus 
 M_\alpha) $. It is enough to find for all $ \alpha<\kappa $ an $ \mathcal{E}_\alpha \in \mathbb{RS}(D^{\alpha})  $ with
  $ D^{\alpha}\circlearrowleft \mathcal{E}_\alpha=D_\kappa^{\alpha} $, since then the concatenation of $ 
  \mathcal{E}_\alpha$ for $ \alpha<\kappa $ is suitable for $ \mathcal{C}_{M,D} $. By induction $ 
  \mathcal{C}_{M_{\alpha+1}, D_\alpha, D_\kappa } $ exists for every $ \alpha<\kappa $.
\begin{claim}
For every $ \alpha<\kappa $, $ (D_\alpha \circlearrowleft  \mathcal{C}_{M_{\alpha+1}, D_\alpha, D_\kappa })\cap 
M_{\alpha+1}=D_\kappa\cap 
M_{\alpha+1}$.
\end{claim}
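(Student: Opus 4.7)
The plan is to apply property 2 of the inductive hypothesis (Lemma \ref{finalLemma}) directly to the triple $(M_{\alpha+1}, D_\alpha, D_\kappa)$, using the bookkeeping that the recursion was designed to provide. First I would verify that this application is legal. By condition ii of the recursion, $D_\alpha \in M_{\alpha+1}$, so the reversion sequences $\mathcal{C}_{M_{\alpha+1},D_\alpha}$ and $\mathcal{C}_{M_{\alpha+1},D_\alpha,D_\kappa}$ produced by the induction hypothesis (available because $|M_{\alpha+1}|<\kappa$) are well defined. Moreover $D_\kappa$ is a reorientation of $D$, hence of $D_\alpha$, so it is a legitimate choice of ``$R$'' for the role it plays in property 2.

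The key observation is that the successor step of the recursion defined $D_{\alpha+1} := D_\alpha \circlearrowleft \mathcal{C}_{M_{\alpha+1},D_\alpha}$, which is exactly the orientation appearing in the premise of property 2. So to invoke property 2, I need to check that $D_\kappa$ is locally reachable from $D_{\alpha+1}$. But this is given directly by condition v of the recursion applied at stage $\kappa$: $D_\kappa$ is locally reachable from $D_\beta$ for every $\beta<\kappa$, and in particular for $\beta=\alpha+1<\kappa$ (which is valid since $\kappa$ is an infinite cardinal, hence a limit ordinal).

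Having verified the hypothesis, property 2 of Lemma \ref{finalLemma} with $M=M_{\alpha+1}$, $D=D_\alpha$, $R=D_\kappa$ yields
\[
(D_\alpha \circlearrowleft \mathcal{C}_{M_{\alpha+1},D_\alpha,D_\kappa}) \cap M_{\alpha+1} \;=\; D_\kappa \cap M_{\alpha+1},
\]
which is precisely the claim. I do not anticipate any real obstacle here: the statement is essentially a verification that the objects constructed by the recursion fit the input shape required by the lemma's second property, with the ``local reachability'' hypothesis supplied for free by condition v.
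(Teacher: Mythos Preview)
Your proof is correct and follows exactly the same approach as the paper: the paper's proof is the single sentence ``Since $D_\kappa$ is locally reachable from $D_{\alpha+1}=D_\alpha \circlearrowleft \mathcal{C}_{M_{\alpha+1}, D_\alpha}$, it follows directly from property 2,'' and you have simply unpacked this by explicitly verifying the prerequisites (that $D_\alpha\in M_{\alpha+1}$, that the induction hypothesis applies since $|M_{\alpha+1}|<\kappa$, and that local reachability comes from condition v).
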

\begin{proof}
Since $ D_\kappa $ is locally  reachable from 
 $  D_{\alpha+1}=D_\alpha \circlearrowleft  \mathcal{C}_{M_{\alpha+1}, D_\alpha }  $,  it follows directly from property 2.
\end{proof}

This means that $ \mathcal{C}_{M_{\alpha+1}, D_\alpha, D_\kappa} $ simultaneously transforms $ D^{\beta} $ to $ 
D^{\beta}_\kappa  $  for every $ \beta 
\leq\alpha $ instead of doing this only for  $ \alpha $ and  not touching $ D^{\beta} $ for $ \beta<\alpha $ as required for 
$ \mathcal{E}_\alpha  $. We fix this  by applying Corollary
 \ref{avoid down} with 
$\mathcal{C}_{M_{\alpha+1}, D_\alpha,D_\kappa},\  M_\alpha,\   D,\ D_\alpha $. We have just constructed the desired $ 
\mathcal{C}_{M,D} $. 
 
 Suppose that $ R $ is locally reachable from $ D_\kappa $. We build $ \mathcal{C}_{M,D,R} $ using similar 
 techniques as for  $ \mathcal{C}_{M,D} $. By induction $\mathcal{C}_{M_{\alpha+1}, D_\alpha, R }$ exists for 
 $\alpha<\kappa$.
 
 \begin{claim}
 For every $ \alpha<\kappa $, $ (D_\alpha \circlearrowleft  \mathcal{C}_{M_{\alpha+1}, D_\alpha, R })\cap 
 M_{\alpha+1}=R\cap 
 M_{\alpha+1}$
 \end{claim}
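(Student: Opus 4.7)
The plan is to mimic the proof of the preceding claim: invoke property 2 of the inductively-chosen $\mathcal{C}_{M_{\alpha+1}, D_\alpha, R}$. Applying Lemma \ref{finalLemma} inductively to the elementary submodel $M_{\alpha+1}$ (which has the chain property and satisfies $\left|M_{\alpha+1}\right|<\kappa$), to the digraph $D_\alpha \in M_{\alpha+1}$ (by condition ii), and to the reorientation $R$, one obtains a reversion sequence whose property 2 delivers exactly the required equality, conditional on $R$ being locally reachable from $D_\alpha \circlearrowleft \mathcal{C}_{M_{\alpha+1}, D_\alpha} = D_{\alpha+1}$.

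So the only thing left to verify is this local reachability. Since $\kappa$ is an uncountable cardinal it is a limit ordinal, so $\alpha+1<\kappa$, and condition v of the construction of the chain $\langle D_\beta: \beta\leq\kappa\rangle$ yields that $D_\kappa$ is locally reachable from $D_{\alpha+1}$. Combined with the hypothesis immediately preceding the claim, namely that $R$ is locally reachable from $D_\kappa$, and the transitivity of local reachability (Corollary \ref{loc reach trans}), it follows that $R$ is locally reachable from $D_{\alpha+1}$, as needed.

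I do not expect any real obstacle at this step: it is a direct mirror of the previous claim with $D_\kappa$ replaced by the more general reorientation $R$. The substantive work has already been packaged into the induction hypothesis on smaller elementary submodels and into properties i--v of the chain $\langle D_\beta\rangle$. Once the claim is in hand, the construction of $\mathcal{C}_{M,D,R}$ from the pieces $\mathcal{C}_{M_{\alpha+1}, D_\alpha, R}$ will proceed exactly as for $\mathcal{C}_{M,D}$, via Corollary \ref{avoid down} to peel off the earlier layers $M_\alpha$ so that the concatenation lies in $\mathbb{RS}(D\cap M)$ without touching the same edge infinitely often.
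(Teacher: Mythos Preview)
Your proposal is correct and follows essentially the same approach as the paper: verify that $R$ is locally reachable from $D_{\alpha+1}=D_\alpha\circlearrowleft\mathcal{C}_{M_{\alpha+1},D_\alpha}$ by combining condition~v (giving $D_\kappa$ locally reachable from $D_{\alpha+1}$) with the hypothesis on $R$ via transitivity (Corollary~\ref{loc reach trans}), and then invoke property~2 of the inductively obtained $\mathcal{C}_{M_{\alpha+1},D_\alpha,R}$. Your additional remark that $\alpha+1<\kappa$ because $\kappa$ is a limit ordinal is a helpful clarification the paper leaves implicit.
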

 \begin{proof}
 $ R$ is locally  reachable from 
  $ D_\alpha \circlearrowleft  \mathcal{C}_{M_{\alpha+1}, D_\alpha }= D_{\alpha+1}  $ by the transitivity of local reachability 
  (see Corollary \ref{loc reach trans})  since we assumed that $ R$ is 
  locally  reachable from $ D_\kappa $ which itself is locally reachable from $ D_{\alpha+1} $. In the light of this the 
  statement  follows  from 
  property 2.
 \end{proof}
 
Thus $ \mathcal{C}_{M_{\alpha+1}, D_\alpha, D_\kappa} $ transforms  $ D^{\beta} $ to $ R^{\beta}:=R\cap 
(M_{\beta+1}\setminus M_\beta)  $  
for every $ \beta  \leq\alpha $.
We construct a $ \mathcal{E}_\alpha^{*} \in \mathbb{RS}(D^{\alpha})  $  by applying Corollary \ref{avoid down} with
$ \mathcal{C}_{M_{\alpha+1}, D_\alpha, R },\  M_\alpha,\  D,\ D_\alpha  $ for which 
 $ D^{\alpha}\circlearrowleft \mathcal{E}_\alpha^{*}=R^{\alpha} $. Let $ 
 \mathcal{C}_{M,D,R} $ be the concatenation of 
  the sequences $ \mathcal{E}_\alpha^{*}\  (\alpha<\kappa)$.
\end{proof}
\section{On the structure of scattered strong digraphs}
In a scattered tournament every strong component is finite (see Theorem \ref{strong 
finite}). How simple is the structure of a scattered digraph in general?  A simple way  to construct arbitrary large scattered 
strong digraphs is glueing together 
finite strong digraphs in a tree-like structure where the neighbouring finite digraphs share exactly one vertex. One may hope that 
under the 
extra
assumption 
that the underlying undirected graph is $ 2 $-vertex-connected a strong scattered digraph must be finite, however, it is false. 
It is not too hard to prove that if every  edge of a digraph $ D $ is  in only finitely many directed cycles, then $ D $ 
is scattered.  Using this  observation, it is easy to draw a counterexample (see Figure \ref{fig example}).

\begin{figure}[H]
 \centering
\begin{tikzpicture}

\node [draw,circle] (v1) at (-3,-0.5) {};
\node [draw,circle] (v2) at (-2,-0.5) {};
\node [draw,circle] (v3) at (-1,-0.5) {};
\node [draw,circle] (v4) at (0,-0.5) {};
\node [draw,circle] (v5) at (1,-0.5) {};
\node [draw,circle] (v6) at (2,-0.5) {};

\draw [ ->] (v1) edge (v2);
\draw [ ->] (v2) edge (v3);
\draw [ ->] (v3) edge (v4);
\draw [ ->] (v4) edge (v5);
\draw [ ->] (v5) edge (v6);
\node (v7) at (3,-0.5) {};
\draw [ ->] (v6) edge (v7);

\node (v8) at (3.1,-0.5) {$\dots$};

\draw [ ->] (v3) edge[out=135,in=45] (v1);
\draw [ ->] (v4) edge[out=135,in=45]  (v2);
\draw [ ->] (v5) edge[out=135,in=45]  (v3);
\draw [ ->] (v6) edge[out=135,in=45]  (v4);

\draw [ ->] (v8) edge[out=135,in=45]  (v5);
\end{tikzpicture}
 \caption{An example for an infinite strong digraph $ D  $ where the underlying undirected graph is $ 2 $-vertex-connected 
 and $ D $ is 
 scattered.}\label{fig example}
 \end{figure}
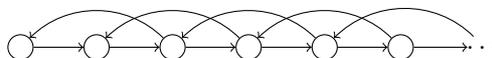

 It is natural to ask if there are  uncountable strong scattered digraphs where the underlying undirected graph is $ 2 
 $-vertex-connected. To show  the negative answer, we use only the weaker 
 assumption  that  the local 
  edge-connectivities are countable  instead of being scattered.   (Strong 
and scattered implies  that  the local edge-connectivities are actually
finite, see Corollary \ref{separate two}.)

\begin{prop}\label{transform to countable}
If $ D $ is a strongly connected  digraph in which every local edge-connectivity is countable   and  $ \mathsf{Un}(D) $ is 
2-vertex-connected, 
then $ D 
$ is countable.
\end{prop}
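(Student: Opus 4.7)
The plan is to argue by contradiction via a countable elementary submodel, reducing to Fact \ref{big loc connect}. Suppose $|V(D)|\geq\aleph_1$ and fix a countable elementary submodel $M$ with $D\in M$; elementarity then gives $|V(D)\cap M|\geq 2$, and cardinality gives $V(D)\setminus M\neq\varnothing$. Let $C$ be a connected component of the subgraph of $\mathsf{Un}(D)$ induced on $V(D)\setminus M$, and let $N\subseteq V(D)\cap M$ be the set of $M$-vertices that have an $\mathsf{Un}(D)$-neighbour in $C$. Because $C$ is a component of the subgraph on $V(D)\setminus M$, every path in $\mathsf{Un}(D)$ from $C$ to $V(D)\setminus C$ must pass through $N$, so deleting $N$ separates $C$ from $V(D)\cap M\setminus N$; $2$-vertex-connectivity of $\mathsf{Un}(D)$ therefore forces $|N|\geq 2$, as otherwise the unique element of $N$ would be a cut vertex. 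The target is to exhibit distinct $y_1,y_2\in N$ together with a directed $y_1\to y_2$ path in $D$ whose interior lies entirely in $C\subseteq V(D)\setminus M$; such a path belongs to $D\setminus M$, so Fact \ref{big loc connect} would yield $\lambda(y_1,y_2;D)>|M|=\aleph_0$, contradicting the hypothesis that every local edge-connectivity is countable.

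To build such a path, for each $a\in C$ set $Y^+(a):=\{y\in N:\text{some } a\to y \text{ path in } D \text{ has interior in } C\}$ and define $Y^-(a)$ symmetrically. Both are non-empty by strong connectivity of $D$: for any $u\in V(D)\cap M$, an $a\to u$ path in $D$ has a first edge exiting $C$, whose head necessarily lies in $N$ (using that $C$ is a component of $\mathsf{Un}(D)\setminus M$), giving an element of $Y^+(a)$, and a symmetric ``last entry'' argument handles $Y^-(a)$. If some $a\in C$ admits $y_1\in Y^+(a)$ and $y_2\in Y^-(a)$ with $y_1\neq y_2$, then concatenating the two witnessing paths through $a$ yields a $y_2\to y_1$ walk whose vertex set lies in $C\cup\{y_1,y_2\}$, from which the desired path is extracted and we are done.

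Otherwise $Y^+(a)=Y^-(a)=\{y(a)\}$ for every $a\in C$. If $a,a'\in C$ are joined by any directed edge of $D$, prepending or appending that single edge to the witnessing path for $y(a)$ or $y(a')$ shows $y(a)=y(a')$, so by connectedness of $C$ in $\mathsf{Un}(D)$ the function $y$ is constant with some value $y^*\in N$. Since $|N|\geq 2$, pick $y'\in N\setminus\{y^*\}$; by the definition of $N$, $y'$ is incident in $D$ to some $a\in C$, and that single edge forces $y'\in Y^+(a)\cup Y^-(a)=\{y^*\}$, a contradiction. The essential use of $2$-vertex-connectivity and the main subtlety of the proof lie in this last step: $2$-vertex-connectivity supplies a second boundary vertex $y'\neq y^*$, which is exactly what rules out the degenerate ``single-port'' case and produces the $y_1\to y_2$ path with interior in $C$ that feeds into Fact \ref{big loc connect}.
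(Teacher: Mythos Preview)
Your proof is correct. Both you and the paper argue through a countable elementary submodel $M\ni D$ and ultimately invoke Fact~\ref{big loc connect}: a directed path in $D\setminus M$ between two distinct vertices of $V(D)\cap M$ forces an uncountable local edge-connectivity. The routes to that path differ, however. The paper analyses the \emph{strong} components $K_v$ of $D\setminus M$ indexed by $v\in V(D)\cap M$: it shows every vertex of $D$ lies in some $K_v$, that there are no $D\setminus M$-edges between distinct $K_v$'s, and hence that any edge of $D$ leaving $K_v$ must be incident with $v$; $2$-vertex-connectivity then forces $K_v=\{v\}$ for all $v$, so $V(D)\subseteq M$. You instead fix a single \emph{undirected} component $C$ of $\mathsf{Un}(D)$ restricted to $V(D)\setminus M$, apply $2$-vertex-connectivity up front to obtain at least two boundary vertices in $M$, and then manufacture the required directed path by hand via your $Y^{\pm}(a)$ analysis. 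The paper's argument yields more structural information along the way (a full decomposition of $D\setminus M$ into pieces each attached at a single $M$-vertex), while yours is more direct and concentrates the use of $2$-vertex-connectivity in one localized step.
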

\begin{proof}
Pick a countable elementary 
submodel $ M\ni D $ and for $v\in V(D)\cap M$ let $K_v$ be the strong component of $D\setminus M $ containing $v$ if $ v\in 
V(D\setminus M) $ and let $ \{ v \} $ otherwise. We 
cannot have $K_u=K_v$ for $u\neq v$ since otherwise we would have an uncountable local edge-connectivity by Fact \ref{big 
loc connect}, contradicting the assumption. The same argument shows that there are no edges between the sets 
$K_v$ in $D\setminus M $. Suppose for a contradiction that there exists a $w\in V(D)$ which is not in  any $K_v$.
Since $ D 
$ is strongly connected, there is a path $P$ from $ V(D)\cap M $ to $w$  and a path $Q$ from $w$ to $ V(D)\cap M $ in 
$D\setminus M $. Let us denote the first vertex of $P$ by $u$ and the last vertex of $Q$ by $v$. By the choice of $w$ we have 
$u\neq v$. But then the concatenation of $P$ and $Q$ shows that $v$ is reachable from $u$ in  $D\setminus M $ contradicting 
Fact \ref{big loc connect}.

It follows that the set of the strong components of $D\setminus M $ are exactly $\{K_v: v\in V(D\setminus M)\cap M\} $, 
furthermore, each 
strong component is a weak component as well. Therefore if an edge leaves $ K_v $ or arrives to $ K_v $ in $ D $ it must be 
incident with $ v $. 
Since $ \mathsf{Un}(D) $ 
is $ 2 $-vertex-connected, it implies $ K_v=\{ v \} $ for all $ v\in V(D)\cap M $ and hence $ D\subseteq M $.
\end{proof}

\section{Open problems}
Proposition \ref{fin many cycle} ensures that in  the finite case (Theorem \ref{finite thm}) one can actually reverse pairwise 
edge-disjoint cycles to reduce the 
dichromatic number to at most two. In our proof of the conjecture we can guarantee by using Corollary \ref{touch at most 2} 
that for every edge $ e $ 
there are at most two cycles in the sequence that contain some orientation of $ e $. It definitely cannot be improved to ``at most 
one'' in 
Theorems \ref{max scat from everybody} and \ref{strong finite}. Indeed, reversing
pairwise edge-disjoint cycles cannot change the  local edge-connectivities. We do not know (not even for countable digraphs) if it 
is always possible to reduce the dichromatic number 
to at most two by reversing pairwise edge-disjoint cycles.
\begin{ques}
Is it possible to find in an arbitrary digraph $ D $ a family of pairwise edge-disjoint directed cycles in such a way that if we 
reverse their united edge set then the resulting digraph $ D^{*} $ has dichromatic number at most two?
\end{ques}

Yet another interesting line of research would be to allow the use of $\mathbb Z$-chains (two-way infinite paths) in both the 
definition of the dichromatic number and during the cycle reversions. Let us call these notions the generalized dichromatic 
number and generalized cycle reversion.

\begin{ques}[R. Diestel] Assume $D$ is an arbitrary digraph. Is there a generalized cycle reversion which lowers the generalized 
dichromatic number of $D$ to at most $2$?
\end{ques}

The latter is open even for (countably infinite) tournaments and we also wonder if one can exploit some connections to the theory 
of ends and tangles.

\section{Appendix on elementary submodels}\label{appendix}
Roughly speaking, an elementary submodel of a larger model $\mathbb V$, where in our case $\mathbb V$ is always the whole 
set-theoretic universe, is a  an $M \subseteq\mathbb V$ such that the structure $ (M,\in) $ is a very close approximation of 
$\mathbb V$. Note that $M$ might be countable while $\mathbb V$ is a proper class.

By approximation, we mean that whenever a first-order statement holds in the universe $\mathbb V$ about some elements of $ M 
$ then it is already true inside the structure $ (M,\in) $. The elementary 
submodel technique
 is an  
efficient tool in combinatorial set theory and in several other fields. Let us summarize the basic facts that we use in the paper.  For 
an excellent 
detailed
introduction for elementary submodels we suggest \cite{ElementaryBasic} and for more advanced applications one can see 
\cite{ElementaryAdvanced}. 

First of all, let us make the definition precise. For $ k<n $, let $ \varphi_k$ be a formula in the language of set theory with $ n_k $ 
free 
variables and let $\Sigma = \{ \varphi_k: k<n\}$.  
A $ \Sigma$-\textbf{elementary submodel} is a set $ M $ such that
\[ \bigwedge_{k<n}\left[ (\forall x_1,\dots, x_{n_k}\in M) (\varphi_k(x_1,\dots, x_{n_k}) \Longleftrightarrow (M,\in)\models 
\varphi_k(x_1,\dots, 
x_{n_k}) )  \right]. 
\]

In plain words, we say that the formulas $\varphi$ are \textit{absolute} between $\mathbb V$ and $M$. In this paper, we need 
only the following basic facts about elementary submodels.

First, for any infinite  cardinal  one can  construct an elementary submodel containing a given set.

\begin{fact}\label{exist submod}
For every finite set $ \Sigma $ of formulas, infinite cardinal $ \kappa $ and set $ X $ there is a $ \Sigma $-elementary submodel $ 
M 
$ 
such that $ \kappa \cup \{X \}\subseteq M $ and $M$ has size $\kappa$.
\end{fact}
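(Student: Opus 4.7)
The plan is to carry out a standard downward Löwenheim--Skolem style construction, producing $M$ as the union of an $\omega$-chain of sets of size $\kappa$, each stage closing under witnesses to the relevant existential subformulas.

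First I would replace $\Sigma$ by the (still finite) set $\Sigma^{*}$ consisting of all subformulas of the formulas in $\Sigma$, since the Tarski--Vaught test for $\Sigma$-elementarity of $M$ requires absoluteness to be propagated through subformulas. For every $\psi(y,\bar x) \in \Sigma^{*}$ whose outermost connective is of the form $\exists y$, I would fix, using global choice in $\mathbb{V}$, a Skolem function $f_\psi$ which assigns to every tuple $\bar a$ of the appropriate length some $y$ satisfying $\psi(y,\bar a)$ whenever such a $y$ exists (and an arbitrary fixed value otherwise). There are only finitely many such Skolem functions.

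Next I would set $M_0 := \kappa \cup \{X\}$, which has size $\kappa$, and define recursively
\[
 M_{n+1} := M_n \cup \bigl\{ f_\psi(\bar a) : \psi \in \Sigma^{*} \text{ existential},\ \bar a \in M_n^{<\omega} \bigr\}.
\]
Since there are only finitely many Skolem functions and $|M_n^{<\omega}| = |M_n| = \kappa$, induction gives $|M_n| = \kappa$ for every $n<\omega$. Then $M := \bigcup_{n<\omega} M_n$ satisfies $\kappa \cup \{X\} \subseteq M$ and $|M| = \kappa$.

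Finally, I would verify $\Sigma$-elementarity of $M$ by induction on the complexity of formulas in $\Sigma^{*}$, using the Tarski--Vaught criterion: atomic formulas are absolute by definition; boolean connectives preserve absoluteness trivially; and for a formula of the form $\exists y\, \psi(y,\bar x)$ with $\bar a \in M$, the tuple $\bar a$ already lies in some $M_n$, and the Skolem witness $f_\psi(\bar a)$ lies in $M_{n+1} \subseteq M$, so a witness in $\mathbb{V}$ yields a witness inside $M$. The only subtle points are the need to pass from $\Sigma$ to the subformula closure $\Sigma^{*}$ and the (routine) appeal to global choice when defining the Skolem functions; beyond this the verification is purely bookkeeping.
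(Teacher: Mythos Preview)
Your argument is the standard downward Löwenheim--Skolem / Skolem-hull construction and is correct. The paper itself does not supply a proof of this fact at all: it is stated in the appendix as a basic, well-known fact about elementary submodels (with a reference to \cite{ElementaryBasic} for background), so there is nothing to compare your approach against. One small technical remark: your appeal to global choice when defining the Skolem functions $f_\psi$ as class functions is not available in plain ZFC; the usual workaround is to first invoke the Reflection Theorem to find a set $V_\alpha$ (or $H(\theta)$) in which all formulas of $\Sigma^{*}$ are absolute, well-order $V_\alpha$ by ordinary AC, and then run your Skolem-hull construction inside $V_\alpha$. This yields the same $M$ and removes the dependence on global choice, but the shape of the argument is exactly as you wrote.
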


Second, $M$ has very strong closure properties: if a set $Y$ is uniquely definable by a formula using parameters from an 
elementary submodel $M$ then $Y$ must be in $M$ already.

\begin{fact}\label{contains elem}
Let $ M $ be a $ \Sigma $-elementary submodel and be $Y$ any set. 
\begin{enumerate}
    \item Suppose that there is some $\varphi\in \Sigma$ and $p_i\in M$ so that $\varphi(p_1,\dots, p_n,y)$ only holds for $y=Y$. 
    Then $Y\in M$.
    \item Suppose that $Y\in M$ and the cardinality $\kappa$ of $Y$ is a subset of $M$. Then $Y\subset M$ as well assuming that 
    $\Sigma$ is large enough.\footnote{$ \Sigma $ contains formulas like ``$ \kappa $ is the cardinality of $ Y $'', ``$ f $ is a 
    bijection from $ \kappa$ to $ Y $`` etc.}
\end{enumerate}
\end{fact}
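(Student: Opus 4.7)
The plan is to derive both parts straight from the defining absoluteness property of $\Sigma$-elementary submodels: any $\varphi\in \Sigma$, with parameters from $M$, holds in $\mathbb V$ iff it holds in $(M,\in)$. Essentially all of the work amounts to checking that the handful of auxiliary sentences we invoke can be included in $\Sigma$, which is exactly the "$\Sigma$ large enough" caveat in part (2) (and which we tacitly need in part (1) too).

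For part (1), I would assume that $\Sigma$ contains both $\varphi$ and the existential closure $\psi(x_1,\dots,x_n) := \exists y\,\varphi(x_1,\dots,x_n,y)$. Since $Y$ witnesses $\psi(p_1,\dots,p_n)$ in $\mathbb V$, elementarity transfers the existential statement down to $(M,\in)$, producing some $y'\in M$ with $(M,\in)\models\varphi(p_1,\dots,p_n,y')$. Elementarity applied to $\varphi$ itself now pushes this back up to $\mathbb V$, so $\varphi(p_1,\dots,p_n,y')$ holds in $\mathbb V$. The uniqueness hypothesis then forces $y'=Y$, so $Y\in M$.

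For part (2), I would first put a bijection $f\colon\kappa\to Y$ inside $M$ and then show $Y=\operatorname{ran}(f)\subseteq M$. To produce such an $f$, observe that the sentence "$\exists f\,(f\text{ is a bijection from }\kappa\text{ to }Y)$" holds in $\mathbb V$ since $|Y|=\kappa$. Assuming this sentence, the defining formula of "bijection from $\kappa$ to $Y$", and the formula "the cardinality of $Y$ is $\kappa$" all lie in $\Sigma$, elementarity transports the existential into $(M,\in)$, and by the semantics of $\models$ some $f\in M$ witnesses it; absoluteness of the "bijection" formula then upgrades $f$ to an honest bijection in $\mathbb V$. Now for each $\alpha\in\kappa$, the hypothesis $\kappa\subseteq M$ gives $\alpha\in M$, and the formula "$\langle\alpha,y\rangle\in f$" uniquely pins down $y=f(\alpha)$; part (1) applied with this formula (and parameters $\alpha,f\in M$) yields $f(\alpha)\in M$. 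Ranging $\alpha$ over $\kappa$ gives $Y\subseteq M$.

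There is no substantive obstacle, only bookkeeping: one must enlarge $\Sigma$ to contain the few auxiliary formulas listed above and their existential closures. Since Fact \ref{exist submod} allows $\Sigma$ to be an arbitrary prescribed finite set of formulas, this enlargement is free of charge, and the two parts then follow by the short arguments above.
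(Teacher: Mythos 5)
The paper does not actually prove this Fact: it is quoted in the appendix as a standard property of elementary submodels (with the ``$\Sigma$ large enough'' footnote and pointers to the cited surveys), so there is no in-paper argument to compare against. Your proposal is the standard argument and is essentially correct: part (1) by transferring the existential closure down to $(M,\in)$, pulling the witness back up by absoluteness of $\varphi$, and invoking uniqueness; part (2) by getting a bijection $f\colon\kappa\to Y$ into $M$ and then applying part (1) to the formula $\langle\alpha,y\rangle\in f$ for each $\alpha\in\kappa\subseteq M$. You are also right that part (1) tacitly needs the existential closure of $\varphi$ in $\Sigma$, which is exactly what the ``large enough'' convention is for. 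One small step you should make explicit in part (2): when you transfer the sentence ``there exists a bijection from $\kappa$ to $Y$'' you are using $\kappa$ as a parameter, but the hypothesis only gives $\kappa\subseteq M$, not $\kappa\in M$. This is repaired with a tool you already have on the table: apply part (1) to the formula ``$x$ is the cardinality of $Y$'' (which you list as belonging to $\Sigma$) with parameter $Y\in M$; its unique solution is $\kappa$, so $\kappa\in M$, and the rest of your argument goes through unchanged. (Alternatively one can quantify the cardinal away inside the formula and avoid the parameter altogether.) With that sentence added, the proof is complete.
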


The second statement is somewhat subtle if one encounters elementary submodels for the first time. By the first fact, we can find 
countable elementary submodels $M$ which contain the real line $\mathbb R$ as an element (i.e., $X=\{\mathbb R\}$). 
However, $\mathbb R$ cannot be a subset of $M$ simply by cardinality.

Also, it is common practice when using elementary submodels  to omit $ \Sigma $ completely  from the discussion. In essentially 
all cases, it is clear (but tedious to write out)
 for which finitely many
formulas the absoluteness arguments are used. 
\medskip

Finally, we make use of the following fact regularly.

\begin{fact}\label{big loc connect}
Let $ D $ be a digraph and let $ M\ni D $ be an elementary submodel with $ \left|M\right|=\kappa\subseteq M $.  If $u,v\in M\cap 
V(D)$ are distinct and $ 
\lambda(u,v;D\setminus M)>0 $ then 
$ 
\lambda(u,v;D)>\kappa $ and $  \lambda(u,v;D\cap M)=\kappa$.
\end{fact}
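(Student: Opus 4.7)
The plan is to prove the two assertions separately, in each case using Menger's theorem (Theorem \ref{InfMeng}) to translate the connectivity into a first-order statement about cuts or path-systems, and then invoking elementarity together with Fact \ref{contains elem}.

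For $\lambda(u,v;D)>\kappa$, I would argue by contradiction. Suppose $\lambda(u,v;D)\leq\kappa$. By Menger's theorem, there is a $u\to v$ cut $T\subseteq D$ with $|T|\leq\kappa$. The statement ``there exists a $u\to v$ cut in $D$ of size at most $\kappa$'' is a first-order statement about $D,u,v,\kappa$, all of which are in $M$ (note $\kappa\in M$ since $\kappa\subseteq M$ and $|M|=\kappa$). So by elementarity there is such a cut $T\in M$. Since $T\in M$, $|T|\leq\kappa$, and $\kappa\subseteq M$, Fact \ref{contains elem}(2) gives $T\subseteq M$. On the other hand, the hypothesis $\lambda(u,v;D\setminus M)>0$ provides a $u\to v$ path $P$ in $D\setminus M$, and $P$ must meet $T$. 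This yields an edge that is simultaneously in $M$ (because it belongs to $T\subseteq M$) and not in $M$ (because it is an edge of $P\subseteq D\setminus M$), a contradiction.

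For $\lambda(u,v;D\cap M)=\kappa$, the upper bound is immediate from $|D\cap M|\leq|M|=\kappa$. For the lower bound, fix an arbitrary $\alpha<\kappa$. Since we have already shown $\lambda(u,v;D)>\kappa>\alpha$, the statement ``there exist $\alpha$ pairwise edge-disjoint $u\to v$ paths in $D$'' is true. The parameters $\alpha,D,u,v$ are all in $M$, hence by elementarity there is a witnessing family $\mathcal{P}_\alpha\in M$. Applying Fact \ref{contains elem}(2) again to $\mathcal{P}_\alpha$ (using $|\mathcal{P}_\alpha|=\alpha\leq\kappa\subseteq M$) yields $\mathcal{P}_\alpha\subseteq M$; and since each path in $\mathcal{P}_\alpha$ is finite, a second application shows that each individual path lies entirely in $M$, hence in $D\cap M$. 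Thus $\lambda(u,v;D\cap M)\geq\alpha$ for every $\alpha<\kappa$, giving $\lambda(u,v;D\cap M)\geq\kappa$.

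The only subtlety in this proof, and the one place to double-check, is the repeated use of Fact \ref{contains elem}(2) that $T$ and $\mathcal{P}_\alpha$ (and then the individual paths) are actually subsets of $M$ rather than merely elements of $M$; this is exactly where the hypothesis $\kappa\subseteq M$ is essential. Everything else reduces to writing down the relevant first-order assertions and invoking elementarity.
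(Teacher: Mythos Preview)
Your argument is correct in structure and close to the paper's, but there is one slip worth flagging. The parenthetical claim ``$\kappa\in M$ since $\kappa\subseteq M$ and $|M|=\kappa$'' is not valid in general: nothing in the hypotheses of the Fact forces the model to contain its own cardinality as an \emph{element}. This matters in your first part, where you want to reflect ``there exists a $u\to v$ cut of size at most $\kappa$'' into $M$ using $\kappa$ as a parameter. The fix is easy: reflect instead the statement ``there exists a $u\to v$ cut of minimum cardinality'' (expressible from $u,v,D$ alone, via Theorem~\ref{InfMeng}). By elementarity you obtain such a $T\in M$; since being a minimum cut is absolute, $|T|=\lambda(u,v;D)\leq\kappa$ holds in $V$, and then Fact~\ref{contains elem}(2) applies exactly as you intended.

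As for the comparison with the paper: for the first assertion the paper works on the path side rather than the cut side, taking a \emph{maximal} family $\mathcal{P}$ of edge-disjoint $u\to v$ paths. Maximality is again expressible from $u,v,D$ alone, the size bound $|\mathcal{P}|\leq\kappa$ follows a posteriori from the contradiction hypothesis, and a $u\to v$ path in $D\setminus M$ then violates maximality directly. So the two proofs are dual---yours trades Zorn for Menger. For the second assertion, your device of using a family of size $\alpha$ for each $\alpha<\kappa$ (so that $\alpha\in M$ automatically) is in fact a bit more careful on exactly this parameter issue than the paper's own version, which reflects a single family ``of size $\kappa$'' into $M$.
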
 
\begin{proof} First, let us prove $ 
\lambda(u,v;D)>\kappa$. Assume on the contrary, that any pairwise edge-disjoint system of paths from $u$ to $v$ has size at 
most $\kappa$. Take a maximal such family $\mathcal P$, which is also an element of $M$ (this can be done by absoluteness). 
Now, by applying the previous fact twice, all the paths $P\in \mathcal P$ must be elements of $M$ and again for these finite 
paths, we see that all the edges in these paths $P$ are in $M$ too. Now, the assumption  $ 
\lambda(u,v;D\setminus M)>0 $ says that there is a path $Q$ from $u$ to $v$ that has no edges in $M$. However, then $\mathcal 
P\cup \{Q\}$ is a strictly larger edge-disjoint family violating the maximality of $\mathcal P$.

Second, since in $D$, there is a family $\mathcal P$ of pairwise edge-disjoint paths from $u$ to $v$ of size $\kappa$ (even one 
of size $\kappa^+$), there must exist such a family $\mathcal P'$ in $M$. Now, apply the previous fact twice as before to see that 
all the paths $P\in \mathcal P'$ are in $M$ and  all the edges in these paths $P$ are in $M$ too. So,  $  \lambda(u,v;D\cap M)\geq 
\kappa$ must hold and therefore $  \lambda(u,v;D\cap M)=\kappa$ since $M\cap D$ has size $\kappa$.

\end{proof}

 \begin{bibdiv}
\begin{biblist}
\bib{dichro1st}{article}{
   author={Neumann Lara, V.},
   title={The dichromatic number of a digraph},
   journal={J. Combin. Theory Ser. B},
   volume={33},
   date={1982},
   number={3},
   pages={265--270},
   issn={0095-8956},
   review={\MR{693366}},
   doi={10.1016/0095-8956(82)90046-6},
}

\bib{NLara2col}{article}{
   author={Neumann Lara, V.},
   title={Vertex colourings in digraphs, Some Problems},
   note={Technical Report, University of Waterloo, 1985}
}

\bib{planar}{article}{
   author={Li, Zhentao},
   author={Mohar, Bojan},
   title={Planar digraphs of digirth four are 2-colorable},
   journal={SIAM J. Discrete Math.},
   volume={31},
   date={2017},
   number={3},
   pages={2201--2205},
   issn={0895-4801},
   review={\MR{3705781}},
   doi={10.1137/16M108080X},
}

\bib{Erd}{article}{
   author={Erd\H{o}s, Paul},
   title={Problems and results in number theory and graph theory},
   conference={
      title={Proceedings of the Ninth Manitoba Conference on Numerical
      Mathematics and Computing},
      address={Univ. Manitoba, Winnipeg, Man.},
      date={1979},
   },
   book={
      series={Congress. Numer., XXVII},
      publisher={Utilitas Math., Winnipeg, Man.},
   },
   date={1980},
   pages={3--21},
   review={\MR{593699}},
}

\bib{Erdcent}{article}{
   author={Ne\v{s}et\v{r}il, Jaroslav},
   title={A combinatorial classic---sparse graphs with high chromatic
   number},
   conference={
      title={Erd\H{o}s centennial},
   },
   book={
      series={Bolyai Soc. Math. Stud.},
      volume={25},
      publisher={J\'{a}nos Bolyai Math. Soc., Budapest},
   },
   date={2013},
   pages={383--407},
   review={\MR{3203606}},
   doi={10.1007/978-3-642-39286-3-15},
}

\bib{cons}{article}{
   author={Soukup, D\'{a}niel T.},
   title={Orientations of graphs with uncountable chromatic number},
   journal={J. Graph Theory},
   volume={88},
   date={2018},
   number={4},
   pages={606--630},
   issn={0364-9024},
   review={\MR{3818601}},
   doi={10.1002/jgt.22233},
}

\bib{conj}{misc}{
   author={Thomass\'{e}, St\'{e}phan},
   title={Conjectures on Countable Relations},
   date={},
   note={ (personal notes)},
 }

\bib{gyarfas}{article}{
   author={Guiduli, Barry},
   author={Gy\'{a}rf\'{a}s, Andr\'{a}s},
   author={Thomass\'{e}, St\'{e}phan},
   author={Weidl, Peter},
   title={$2$-partition-transitive tournaments},
   journal={J. Combin. Theory Ser. B},
   volume={72},
   date={1998},
   number={2},
   pages={181--196},
   issn={0095-8956},
   review={\MR{1616604}},
   doi={10.1006/jctb.1997.1806},
}

\bib{CharbitPhD}{thesis}{
   author={Charbit, Pierre},
   title={Circuits in Graphs and Digraphs via Embeddings},
   date={2005},
   type={PhD. Thesis},
 }

\bib{ellis2019cycle}{article}{
   author={Ellis, Paul},
   author={Soukup, D\'{a}niel T.},
   title={Cycle reversions and dichromatic number in tournaments},
   journal={European J. Combin.},
   volume={77},
   date={2019},
   pages={31--48},
   issn={0195-6698},
   review={\MR{3875485}},
   doi={10.1016/j.ejc.2018.10.008},
}
\bib{ElementaryBasic}{article}{
   author={Soukup, Lajos},
   title={Elementary submodels in infinite combinatorics},
   journal={Discrete Math.},
   volume={311},
   date={2011},
   number={15},
   pages={1585--1598},
   issn={0012-365X},
   review={\MR{2800978}},
   doi={10.1016/j.disc.2011.01.025},
}
\bib{ElementaryAdvanced}{article}{
   author={Soukup, D\'{a}niel T.},
   author={Soukup, Lajos},
   title={Infinite combinatorics plain and simple},
   journal={J. Symb. Log.},
   volume={83},
   date={2018},
   number={3},
   pages={1247--1281},
   issn={0022-4812},
   review={\MR{3868049}},
   doi={10.1017/jsl.2018.8},
}
			
\bib{aharoni2009menger}{article}{
   author={Aharoni, Ron},
   author={Berger, Eli},
   title={Menger's theorem for infinite graphs},
   journal={Invent. Math.},
   volume={176},
   date={2009},
   number={1},
   pages={1--62},
   issn={0020-9910},
   review={\MR{2485879}},
   doi={10.1007/s00222-008-0157-3},
}

\end{biblist}
\end{bibdiv}
\end{document}